\documentclass[a4paper,12pt]{amsart}

\usepackage{amssymb, amsmath, amsthm, mathrsfs, braket, xspace}
\usepackage[margin=1.0in]{geometry}

\numberwithin{equation}{section}
\usepackage[colorlinks=true]{hyperref}


\usepackage{lineno}
\pagewiselinenumbers
\nolinenumbers\renewcommand{\linelabel}[1]{}

\usepackage{color}

\newcommand{\red}{}
 
\usepackage{mathtools}
\mathtoolsset{showonlyrefs=true} 
\usepackage{enumitem}
\setlist[enumerate]{label=$(\mathrm{\arabic*})$}

\usepackage[all,2cell]{xy}
\objectmargin+{1mm}
\labelmargin+{0.8mm}
\SelectTips{cm}{12}
\setcounter{tocdepth}{2}

\usepackage{aliascnt}
\usepackage{comment}

\newtheorem{thm}{Theorem}[section]

\newaliascnt{cor}{thm}
\newtheorem{cor}[cor]{Corollary}
\aliascntresetthe{cor}

\newaliascnt{lem}{thm}
\newtheorem{lem}[lem]{Lemma}
\aliascntresetthe{lem}

\newaliascnt{prop}{thm}
\newtheorem{prop}[prop]{Proposition}
\aliascntresetthe{prop}

\newaliascnt{conj}{thm}
\newtheorem{conj}[conj]{Conjecture}
\aliascntresetthe{conj}

\theoremstyle{definition}
\newaliascnt{dfn}{thm}
\newtheorem{dfn}[dfn]{Definition}
\aliascntresetthe{dfn}

\newaliascnt{rem}{thm}
\newtheorem{rem}[rem]{Remark}
\aliascntresetthe{rem}

\newaliascnt{prob}{thm}

\aliascntresetthe{prob}

\newaliascnt{ex}{thm}
\newtheorem{ex}[ex]{Example}
\aliascntresetthe{ex}




\newcommand{\Q}{\mathbb{Q}}

\newcommand{\Z}{\mathbb{Z}}
\newcommand{\Zhat}{\widehat{\Z}}

\newcommand{\ba}{\mathbf{a}}
\newcommand{\bb}{\mathbf{b}}

\newcommand{\bA}{\mathbb{A}}
\newcommand{\bP}{\mathbb{P}}

\newcommand{\bZ}{\mathbb{Z}}

\newcommand{\bW}{\mathbb{W}}

\newcommand{\sC}{\mathscr{C}}

\newcommand{\sJ}{\mathscr{J}}
\newcommand{\sX}{\mathscr{X}}

\newcommand{\cO}{\mathcal{O}}
\newcommand{\cM}{\mathcal{M}}
\newcommand{\cN}{\mathcal{N}}
\newcommand{\cF}{\mathcal{F}}


\DeclareMathOperator{\Br}{Br}
\DeclareMathOperator{\ch}{ch}
\DeclareMathOperator{\Coker}{Coker}
\DeclareMathOperator{\Cor}{Cor}
\DeclareMathOperator{\dlog}{dlog}

\DeclareMathOperator{\Gal}{Gal}

\DeclareMathOperator{\Hom}{Hom}

\DeclareMathOperator{\Jac}{Jac}
\DeclareMathOperator{\Ker}{Ker}
\DeclareMathOperator{\res}{res}
\DeclareMathOperator{\Spec}{Spec}
\DeclareMathOperator{\Tr}{Tr}
\DeclareMathOperator{\tr}{tr}
\DeclareMathOperator{\Ztr}{\Z_{\mathrm{tr}}}

\newcommand{\ab}{\mathrm{ab}}
\newcommand{\AS}[1]{[\,#1\,)}
\newcommand{\et}{\mathrm{et}}

\newcommand{\geo}{\mathrm{geo}}
\newcommand{\Gm}{\mathbb{G}_{m}}
\newcommand{\Ga}{\mathbb{G}_{a}}
\newcommand{\Id}{\mathrm{id}}
\newcommand{\sep}{\mathrm{sep}}
\newcommand{\otimesM}{\overset{M}{\otimes}}

\newcommand{\Cork}{\mathbf{Cor}_k}
\newcommand{\Corkaff}{\mathbf{Cor}_k^\mathrm{aff}}

\newcommand{\uMCor}{\operatorname{\mathbf{{\underline{M}}Cor}}}

\newcommand{\MCor}{\operatorname{\mathbf{MCor}}}
\newcommand{\MCork}{\MCor_k}

\newcommand{\bcube}{{\overline{\square}}}
\newcommand{\cube}{\square}

\newcommand{\PSh}{\operatorname{PSh}}
\newcommand{\Sh}{\operatorname{Sh}}

\newcommand{\RSC}{{\operatorname{\mathbf{RSC}}}}
\newcommand{\HI}{{\operatorname{\mathbf{HI}}}}

\newcommand{\Nis}{\mathrm{Nis}}

\title{Extended differential symbol and the Kato homology groups}

\author[T. Hiranouchi]{Toshiro Hiranouchi}\address[T. Hiranouchi]{
Department of Basic Sciences, Graduate School of Engineering, 
Kyushu Institute of Technology, 
1-1 Sensui-cho, Tobata-ku, Kitakyushu-shi, 
Fukuoka 804-8550 JAPAN}
\email{hira@mns.kyutech.ac.jp}

\author[R. Sugiyama]{Rin Sugiyama} \address[R. Sugiyama]{
Department of Mathematics, Physics and Computer Science, 
Japan Women’s University, 2-8-1 Mejirodai, Bunkyo-ku Tokyo, 112-8681 JAPAN
}\email{sugiyamar@fc.jwu.ac.jp}

\keywords{Milnor $K$-groups, higher Chow groups, MSC2020: 19D45; 14C15}

\begin{document}


\begin{abstract}
Building on our previous work \cite{Hir24}, 
we investigate an analogue of the differential symbol map used in the Bloch-Gabber-Kato theorem. 
Within this framework, for an appropriate variety over a field, the higher Chow group corresponds to the 0-th Kato homology group. 
Inspired by Akhtar's theorem \cite{Akh04c} on higher Chow groups, 
we investigate the structure of the 0-th Kato homology group for varieties over arithmetic fields, including finite fields, local fields, and global fields of positive characteristic.
We also express our results in terms of reciprocity sheaves.
\end{abstract}

\maketitle

\section{Introduction}
\label{sec:intro}
Let $F$ be a field of characteristic $p>0$. 
{\red \linelabel{com:1}Let $K_n^M(F)$ denote the $n$-th Milnor $K$-group of the field $F$, 
and 
\[
H^i(F,\Z/p^r\Z(j)) = H^{i-j}_{\et}(\Spec(F), W_r\Omega_{\log}^j) 
\]
denote the \'etale cohomology group $H^{i-j}_{\et}(\Spec(F), W_r\Omega_{\log}^j)$ 
of the logarithmic Hodge-Witt sheaf $W_r\Omega_{\log}^j$ on $\Spec(F)$ (for the precise definition, see \cite[Sect.~0]{Kat86}).}
The Bloch-Gabber-Kato theorem (\cite[Cor.~2.8]{BK86})  
and Kahn's theorem (\cite[Thm.~4.5]{Hir24})
state that the differential symbol map 
\begin{equation}
    \label{eq:BKG}
  s_{p^r}^n\colon (\Gm^{\otimesM n}(F))/p^r(\Gm^{\otimesM n}(F)) \xrightarrow{\simeq} K_n^M(F)/p^rK_n^M(F) \xrightarrow{\simeq} H^n(F,\Z/p^r\Z(n))
\end{equation}
is an isomorphism, 
for any $r\ge 1$ and $n\ge 0$ (see also \autoref{thm:BKG}, \autoref{thm:H24}). 
Here, 
$\Gm^{\otimesM n}(F)$ denotes the $n$-fold Mackey product of the multiplicative group $\Gm$ 
(see \autoref{def:otimesM} for the definition). 
In \cite{Hir24}, 
we extended the differential symbol map, resulting in the isomorphism
\begin{equation}
\label{eq:H24}    
\tilde{s}_{p^r}^n\colon \Big(W_r \otimesM \Gm^{\otimesM n}\Big) (F)/\wp \xrightarrow{\simeq} H^{n+1}(F,\Z/p^r\Z(n)) =: H^{n+1}_{p^r}(F),
\end{equation}
for $r\ge 1$ and $n\ge 0$. 
Here, an endomorphism $\wp\colon \Big(W_r \otimesM \Gm^{\otimesM n}\Big) (F) \to \Big(W_r \otimesM \Gm^{\otimesM n}\Big) (F)$ is defined by the Artin-Schreier-Witt map on the Witt group $W_r$,  
and 
$\Big(W_r \otimesM \Gm^{\otimesM n}\Big) (F)/\wp  := \Coker(\wp)$ 
(see \autoref{thm:H24}). 
{\red In particular,\linelabel{com:22'} when $n=1$, we have 
\begin{equation}\label{eq:HBr}
    H^2_{p^r}(F) \simeq \Br(F)[p^r],
\end{equation}
where $\Br(F)[p^r]$ denotes the $p^r$-torsion part $\Br(F)[p^r]$ of the Brauer group $\Br(F)$ of $F$
(cf.~\cite[Sect.~0]{Kat86}). 
Thus, the isomorphism \eqref{eq:H24} provides a representation of the Brauer group.}
In this paper, we utilize this extended differential symbol map to study the 0-th Kato homology groups.

To state our main theorem precisely, we 
assume $[F:F^p]\le p^s$ for some integer $s\ge 0$.
For an appropriate scheme $X$ over $F$ 
and an integer $r \ge 1$, 
there is a homological complex $KC_{\bullet}^{(s)}(X,\Z/p^r)$ of Bloch-Ogus type 
(\cite[Prop.~1.7]{Kat86}): 
\begin{equation}\label{eq:KC}
\vcenter{
\xymatrix@C=5mm@R=-2mm{
 \cdots \ar[r]^-{\partial} & \displaystyle\bigoplus_{x\in X_{j}}H^{s+ j+1}_{p^r}(F(x)) \ar[r]^-{\partial} &\cdots  
 \ar[r]^-{\partial}& \displaystyle\bigoplus_{x\in X_{1}}H^{s+2}_{p^r}(F(x)) \ar[r]^-{\partial}  & \displaystyle\bigoplus_{x\in X_{0}}H^{s+1}_{p^r}(F(x)), \\
 & \mbox{ \small degree $j$ } & & \mbox{\small degree $1$} & \mbox{\small degree $0$} 
}}
\end{equation}
where 
$X_j$ denotes the set of points $x$ in $X$ with $\dim \overline{\set{x}} = j$, and 
$F(x)$ denotes the residue field at $x$. 
For $j\ge 0$, 
the $j$-th \textbf{Kato homology group} of $X$ (with coefficients in $\Z/p^r\Z$) is defined 
by 
\begin{equation}
KH_j^{(s)}(X,\Z/p^r\Z) := H_j(KC_{\bullet}^{(s)}(X,\Z/p^r\Z))
\end{equation}
(see \autoref{sec:KH} for the precise definition). 
In particular, we have 
\begin{equation}
\label{eq:CokKH}
\Coker\left(\partial \colon \bigoplus_{x\in X_{1}}H^{s+2}_{p^r}(F(x)) \to \bigoplus_{x\in X_{0}}H^{s+1}_{p^r}(F(x)) \right) = KH_0^{(s)}(X,\Z/p^r\Z).
\end{equation}
The structure map $f\colon X\to \Spec(F)$ induces a map 
\[
f_{\ast}\colon KH_0^{(s)}(X,\Z/p^r\Z) \to KH_0^{(s)}(\Spec(F),\Z/p^r\Z) = H_{p^r}^{s+1}(F).
\]
One of our main results provides conditions 
under which the induced homomorphism $f_{\ast}$ is an isomorphism. 
This result includes the following theorem when $F$ is an arithmetic field, namely, 
a finite field, a local field, or a global field. 
Here, a \textbf{local field} is a completely discrete valuation field with finite residue field, 
and a \textbf{global field} (of positive characteristic) is a function field of one variable 
over a finite field.

\begin{thm}[{\autoref{thm:finite}, \autoref{thm:local}, and \autoref{thm:global}}]
\label{thm:main_intro}
Let $F$ be a field of characteristic $p>0$, 
and $X$ a projective smooth and geometrically irreducible scheme over $F$.  
 
    \begin{enumerate}
        \item If $F$ is a finite field, 
        then we have $KH_0^{(s)}(X,\Z/p^r\Z) = 0$ for $s\ge 1$, and 
        \[ 
        f_{\ast}\colon KH_0^{(0)}(X,\Z/p^r\Z) \to KH_0^{(0)}(\Spec(F),\Z/p^r\Z)
        \]
        is an isomorphism.
        \item If $F$ is a local field or a global field, 
        then we have $KH_0^{(s)}(X,\Z/p^r\Z) = 0$ for $s\ge 2$. 
        If we further assume that 
        $X= X_1\times \cdots \times X_d$ is the product of  curves $X_1,\ldots,X_d$ over $F$ with $X_i(F)\neq \emptyset$, 
        then the induced map 
        \[
        f_{\ast}\colon KH_0^{(1)}(X,\Z/p^r\Z) \xrightarrow{\simeq} KH_0^{(1)}(\Spec(F),\Z/p^r\Z)
        \]
        is an isomorphism. 
        \end{enumerate}
\end{thm}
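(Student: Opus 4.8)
The plan is to reduce the two parts of \autoref{thm:main_intro} to a uniform statement about the surjectivity and injectivity of $f_\ast$ on $KH_0^{(s)}$, and then feed in arithmetic-field input through the isomorphisms \eqref{eq:H24} and \eqref{eq:HBr}. First I would translate the vanishing claims: for a projective smooth geometrically irreducible $X$ of dimension $d$ over $F$, the complex \eqref{eq:KC} only has terms in degrees $0,\dots,d$, and each residue field $F(x)$ for $x \in X_j$ has $p$-rank bounded by $s + (d-j)$ roughly speaking — more precisely $[F(x):F(x)^p] \le p^{s_0 + d}$ where $[F:F^p] = p^{s_0}$, and the cohomological dimension estimates for $H^{i}_{p^r}$ of such fields force $H^{s+j+1}_{p^r}(F(x)) = 0$ once $s$ exceeds the relevant threshold. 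For $F$ finite we have $s_0 = 0$ and $\operatorname{cd}_p$ of any finitely generated extension of transcendence degree $e$ is $e+1$, which kills everything with $s \ge 1$; for $F$ local or global, $s_0 = 0$ again but $\operatorname{cd}_p$ of finitely generated extensions is one larger, so vanishing holds for $s \ge 2$. I would make these bounds precise using the behaviour of $H^\bullet_{p^r}(-)$ under residue maps already recorded in \cite{Kat86} and the excerpt's earlier sections.

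Next, for the isomorphism statements I would argue surjectivity of $f_\ast$ first. In part (1), the target is $KH_0^{(0)}(\Spec F,\Z/p^r) = H^1_{p^r}(F) = \Z/p^r$ (for $F$ finite, via \eqref{eq:H24} with $n=0$, the Artin–Schreier–Witt presentation of $W_r(F)/\wp$), and since $X$ has an $F(x)$-rational point for some closed point $x$ with $F(x)/F$ finite — indeed $X$ is projective over a finite field so $X_0 \neq \emptyset$ — the transfer/corestriction argument gives a splitting of $f_\ast$, hence surjectivity. Injectivity is the substantive point: $KH_0^{(0)}(X,\Z/p^r)$ is the cokernel of $\partial$ from $X_1$-part into $X_0$-part, and one must show every element of $\bigoplus_{x\in X_0} H^1_{p^r}(F(x))$ lying in $\ker(f_\ast)$ is a boundary. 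Here I would invoke a reciprocity argument: $f_\ast$ composed with the sum-of-corestrictions is governed by the class field theory of $X$ over the finite field $F$ (unramified class field theory, Lang's theorem / the global Kato conjecture in dimension matching), and the kernel is exactly the image of $\partial$ — this is where I expect to cite Kato's higher-dimensional class field theory or, in the reciprocity-sheaf language promised in the abstract, the relevant computation of $H^0$ of the Kato complex as a Somekawa-type $K$-group that vanishes after the degree-$0$ transfer for $X/F$ proper over a finite field.

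For part (2) with $F$ local or global and $X = X_1 \times \cdots \times X_d$ a product of curves each with an $F$-point, the strategy is to leverage the $F$-rational points $x_i \in X_i(F)$ to produce a section $\Spec F \to X$ of $f$, which immediately splits $f_\ast$ and gives surjectivity; then injectivity must be reduced to the one-variable case by a Künneth-type dévissage. Concretely I would filter $X$ by the product structure, use that $KH_0^{(1)}(\Spec F, \Z/p^r) = H^2_{p^r}(F) = \Br(F)[p^r]$ is $\Z/p^r$ (local) or fits in the fundamental exact sequence of Brauer groups of a global field, and reduce via the section $x_i$ and a projection $X \to X_1 \times \cdots \times \widehat{X_i} \times \cdots \times X_d$ to the class field theory of a single curve over $F$ — namely Bloch's and Kato–Saito's class field theory for curves over local/global fields, which states precisely that the reciprocity map identifies the relevant Kato homology with $\Br(F)[p^r]$ after pushforward. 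The main obstacle, and the step I expect to be hardest, is this injectivity/dévissage for products: one must control the boundary maps $\partial$ in \eqref{eq:KC} across the product cell structure — i.e. show that a class in $\bigoplus_{x \in X_0} H^{s+1}_{p^r}(F(x))$ killed by $f_\ast$ is a sum of boundaries — and the product hypothesis together with the $F$-point hypothesis is exactly what is needed to split off one curve at a time, but making the induction bookkeeping precise (tracking which residue fields appear, and that no new $p$-rank obstruction arises) is the delicate core of the argument.
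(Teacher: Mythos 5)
Your treatment of the vanishing claims is essentially fine in substance, though the mechanism you name is not quite the right one: what kills $H^{s+j+1}_{p^r}(F(x))$ in degree $0$ is not a $\operatorname{cd}_p$ bound but the $p$-rank of the residue field --- for $x\in X_0$ the extension $F(x)/F$ is finite, so $[F(x):F(x)^p]=[F:F^p]$ and $W_r\Omega^s_{F(x),\log}=0$ once $s$ exceeds $\log_p[F:F^p]$, which gives $KH_0^{(s)}=0$ for $s\ge 1$ (finite $F$) and $s\ge 2$ (local/global $F$). This is a legitimate shortcut; the paper instead deduces the vanishing from its representation of $KH_0^{(s)}$ as a quotient of the Mackey product $\bigl(W_r\otimesM\underline{CH}_0(X)\otimesM\Gm^{\otimesM s}\bigr)(F)/\wp$ together with $(W_r\otimesM\Gm)(F)=0$ over perfect fields, resp.\ the $p$-divisibility of $K_2^M(F)$.

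The genuine gap is in the injectivity of $f_\ast$, which you yourself flag as the ``delicate core'' but for which you only gesture at ``a reciprocity argument'' and a ``K\"unneth-type d\'evissage'' on the complex \eqref{eq:KC}. The paper does not attack the boundary maps $\partial$ directly at all. Its mechanism is: (i) the surjection $\psi\colon\bigl(W_r\otimesM\underline{CH}^{d+s}(X,s)\bigr)(F)/\wp\twoheadrightarrow KH_0^{(s)}(X,\Z/p^r)$ built from the extended differential symbol (\autoref{prop:surj}); (ii) the reduction (\autoref{lem:isom}) of the bijectivity of $f_\ast^{KH}$ to the single vanishing condition $\bigl(\Ga\otimesM\underline{A}^{d+s}(X,s)\bigr)(F)/\wp=0$, via the right exact sequence \eqref{seq:A} and \autoref{lem:normGa}; and (iii) the verification of that vanishing, which for a product of curves uses Yamazaki's decomposition $\underline{V}(X)\simeq\bigoplus K(-;J_{i_1},\dots,J_{i_r},\Gm)$ into Somekawa $K$-groups (this, not a K\"unneth argument on the Kato complex, is what splits off one curve at a time), and then for a single curve an Artin--Schreier norm trick: for a symbol $\set{a,x}_{F/F}$ one passes to $E=F(\wp^{-1}(a))$ and shows the norm $V(X_E)/p\to V(X)/p$ is surjective using Saito's class field theory for curves over local fields (control of the rank $r(X)$ under base change) and, in the global case, Kato's Hasse principle for curves. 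None of these three steps is supplied or replaced by your sketch, and without the passage from the Kato complex to Mackey products there is no visible way to exploit the hypothesis that $X$ is a product of curves; so the argument as proposed does not close.
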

This result may be regarded as an analogue of Akhtar’s theorem on higher Chow groups. Specifically, the boundary maps on Milnor $K$-groups define the following complex:
\begin{equation}
\label{eq:MKC}
    \vcenter{
\xymatrix@C=5mm@R=-2mm{
 \cdots \ar[r]^-{\partial} & \displaystyle\bigoplus_{x\in X_{j}}K_{s+j}^M(F(x)) \ar[r]^-{\partial} &\cdots  
 \ar[r]^-{\partial}& \displaystyle\bigoplus_{x\in X_{1}}K_{s+1}^M(F(x)) \ar[r]^-{\partial}  & \displaystyle\bigoplus_{x\in X_{0}}K_{s}^M(F(x)). \\
 & \mbox{\small degree $j$} & & \mbox{\small degree $1$} & \mbox{\small degree $0$} 
}}
\end{equation}
Applying $\otimes_{\Z} \Z/p^r\Z$, 
the Bloch-Gabber-Kato theorem \eqref{eq:BKG} gives an isomorphism 
\[
K_{s+j}^M(F(x))\otimes_{\Z}\Z/p^r\Z \xrightarrow{\simeq} H^{s+j}(F(x),\Z/p^r\Z(s+j))
\]
for each $x\in X_j$. 
Consequently, the complex $KC_{\bullet}^{(s)}(X,\Z/p^r\Z)$ defined in \eqref{eq:KC} can be obtained by shifting the degree by one from the sequence \eqref{eq:MKC}. 
It is known that the $0$-th homology group of the complex \eqref{eq:MKC} is isomorphic to  
a higher Chow group of $X$ as follows (\cite[Thm.~3]{Kat86b}, \cite[Thm.~5.5]{Akh04}):
\[
  \Coker\left(\partial\colon \bigoplus_{x\in X_1}K_{s+1}^M(F(x))\to \bigoplus_{x\in X_0}K_{s}^M(F(x))\right)
  \simeq CH^{d+s}(X,s), 
\]
where $d = \dim(X)$. 
Akhtar’s results are summarized in the following theorem:

\begin{thm}[{\cite[Thm.~1.2 and Thm.~1.3]{Akh04c}, \cite[Cor.~7.2]{Akh04}}]
\label{thm:Akh}
Let $F$ be a field of characteristic $p>0$ 
and $X$ a projective smooth variety over $F$ of $d = \dim(X)$.

\begin{enumerate}
\item 
    If $F$ is a finite field, then 
    we have  $CH^{d+s}(X,s) = 0$ for $s\ge 2$, and 
    the induced map 
    \[ 
       f_{\ast}\colon CH^{d+1}(X,1) \to CH^{1}(\Spec(F),1)
    \]
    is an isomorphism.
    \item 
    If $F$ is a global field, then 
    $CH^{d+2}(X,2)$ is a torsion group and $CH^{d+s}(X,s)=0$ for $s\ge 3$.
    \end{enumerate}
\end{thm}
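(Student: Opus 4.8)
This is Akhtar's theorem, and the plan is to reproduce the structure of his argument, which takes place entirely at the level of the Milnor $K$-theoretic complex \eqref{eq:MKC}. Recall from the discussion preceding the statement that $CH^{d+s}(X,s)$ is identified with the cokernel of $\partial\colon\bigoplus_{x\in X_1}K_{s+1}^M(F(x))\to\bigoplus_{x\in X_0}K_{s}^M(F(x))$, and that, since $X$ is projective, the residue field $F(x)$ at each $x\in X_0$ is a finite extension of $F$. Both assertions are therefore questions about the degree-$0$ term of \eqref{eq:MKC} together with the boundary map into it.

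The vanishing and torsion statements reduce to the known structure of the Milnor $K$-groups of the residue fields occurring in degree $0$. If $F$ is finite, then every $F(x)$ with $x\in X_0$ is a finite field and Milnor $K$-theory of a finite field is trivial in degrees $\ge 2$; hence the degree-$0$ term of \eqref{eq:MKC} already vanishes, so $CH^{d+s}(X,s)=0$ for $s\ge 2$. If $F$ is a global field of characteristic $p>0$, then each $F(x)$ with $x\in X_0$ is again a global field of positive characteristic, which has no real place, so $K_s^M(F(x))=0$ for $s\ge 3$ by Bass--Tate; this gives $CH^{d+s}(X,s)=0$ for $s\ge 3$. For $s=2$ one invokes the fact, also due to Bass--Tate, that $K_2^M$ of a global field is a torsion group; hence each $K_2^M(F(x))$ with $x\in X_0$ is torsion, and so is its quotient $CH^{d+2}(X,2)$.

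The substantive point is the isomorphism $f_*\colon CH^{d+1}(X,1)\xrightarrow{\sim}CH^1(\Spec(F),1)=F^\times$ over a finite field $F$, where $f_*$ is induced on degree-$0$ terms by the norm maps $N_{F(x)/F}\colon F(x)^\times\to F^\times$; equivalently, one must establish exactness of $\bigoplus_{x\in X_1}K_2^M(F(x))\xrightarrow{\partial}\bigoplus_{x\in X_0}F(x)^\times\xrightarrow{f_*}F^\times\to 0$ (here one uses that $X$ is geometrically connected). Surjectivity is immediate: $X_0\ne\emptyset$ and the norm map $\bF_{q^{n}}^\times\to\bF_{q}^\times$ of finite field extensions is surjective. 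The content is the inclusion $\Ker(f_*)\subseteq\img(\partial)$: given a family $(a_x)_x$ supported on a finite set $S$ of closed points with $\prod_{x\in S}N_{F(x)/F}(a_x)=1$, one must exhibit a preimage under $\partial$. I would do this, as Akhtar does, by a reduction to dimension one: using Bloch's moving lemma, together with the compatibility of $\partial$ and $f_*$ with pushforward along closed subvarieties, one transfers the problem onto a curve $C\subset X$ meeting all points of $S$, where on the smooth projective normalization of $C$ over the finite field the required statement is precisely Weil reciprocity combined with the surjectivity of the norm used above. The main obstacle is exactly this reduction step — arranging the auxiliary curve so that the boundary map on its normalization lands in \eqref{eq:MKC} and is compatible with $f_*$ — and it is here that the smoothness and projectivity of $X$ and the finiteness of $F$ are used essentially.
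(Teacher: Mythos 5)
First, note that the paper does not prove this statement itself; it is quoted from Akhtar, so there is no internal proof to compare against. Your vanishing and torsion claims are fine: $K_s^M$ of a finite field vanishes for $s\ge 2$, and for a global function field $K_s^M=0$ for $s\ge 3$ and $K_2^M$ is torsion (Bass--Tate), so the degree-$0$ term of \eqref{eq:MKC} already has the asserted structure and the cokernel inherits it.

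The genuine gap is in the one place where the theorem has real content, namely the inclusion $\Ker(f_*)\subseteq\img(\partial)$ for $s=1$ over a finite field. After your (plausible, though Bertini-over-finite-fields-dependent) reduction to a smooth projective curve $C$ with function field $K=F(C)$, you assert that the exactness of
\[
K_2^M(K)\xrightarrow{\partial}\bigoplus_{x\in C_0}F(x)^\times\xrightarrow{N}F^\times\to 0
\]
in the middle ``is precisely Weil reciprocity combined with the surjectivity of the norm.'' It is not. Weil reciprocity only gives $N\circ\partial=0$, i.e.\ the inclusion $\img(\partial)\subseteq\Ker(N)$, which is the trivial direction (it is what makes $f_*$ well defined on the cokernel). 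The reverse inclusion is equivalent to the vanishing of $V(C)=\Ker\bigl(SK_1(C)\to F^\times\bigr)$ for a smooth projective curve over a finite field, and this is a deep theorem about the global field $K$: it is the Bass--Tate/Moore computation of the cokernel of the tame-symbol map for global fields (equivalently, a piece of class field theory for curves over finite fields). Your sketch collapses exactly the step that carries all the difficulty, so as written the argument does not establish injectivity of $f_*$ even for $X$ a curve. For the record, Akhtar's own route in \cite{Akh04c} is different: he uses the presentation $CH^{d+s}(X,s)\simeq K(F;\underline{CH}_0(X),\Gm,\ldots,\Gm)$ of \autoref{thm:Akh2} and the Kato--Saito finiteness of $A_0(X)$ over a finite field, together with norm arguments for $\Gm$ over finite extensions --- essentially the strategy this paper adapts in the proof of \autoref{thm:finite} --- rather than a moving-lemma reduction to curves; that route has the advantage of importing the class field theory input once, at the level of $CH_0$, instead of needing the curve-level exactness statement.
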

The most significant application of this theorem is 
the finiteness of the higher Chow group $CH^{d+1}(X,1)$ of $X$ defined over a finite field $F$, 
achieved through the isomorphisms: 
\[
CH^{d+1}(X,1) \xrightarrow[f_{\ast}]{\simeq} CH^1(F,1)  \simeq F^{\times}.
\]
In a similar manner,
our main theorem (\autoref{thm:main_intro}), 
provides insights into the structure of the Kato homology group $KH_0^{(s)}(X,\Z/p^r\Z)$. 
In the case where $F$ is a finite field, we obtain 
\[
KH_0^{(0)}(X,\Z/p^r\Z) \xrightarrow[f_{\ast}]{\simeq} KH_0^{(0)}(\Spec(F),\Z/p^r\Z) =H^1(F,\Z/p^r\Z) \simeq \Z/p^r\Z.
\]
This is a part of the Kato conjecture (see \autoref{conj:Kato}), 
a result proved by Jannsen and Saito (\cite[Thm.~0.3]{JS09}). 
For the case of a global field $F$, 
Kato proposed a conjecture in \cite{Kat86} that there is a short exact sequence 
\begin{equation}
\label{seq:KH0}    
0\to KH_0^{(1)}(X,\Z/p^r\Z) \to \bigoplus_v KH_0^{(1)}(X_v,\Z/p^r\Z) \to \Z/p^r\Z \to 0, 
\end{equation}
where $v$ ranges over the set of all places of $F$ and $X_v := X\otimes_F F_v$ for each place $v$ 
(see \autoref{conj:Kato3}). 
\linelabel{com:2}{\red Our theorem \autoref{thm:main_intro} (2) implies that, 
for the product $X$ of curves over $F$,} 
the $0$-th Kato homology groups coincide with 
the $p^r$-torsion subgroup of the Brauer groups: 
\begin{align*}
    KH_0^{(1)}(X,\Z/p^r\Z) &\xrightarrow[f_{\ast}]{\simeq} KH_0^{(1)}(\Spec(F),\Z/p^r\Z) \stackrel{\eqref{eq:HBr}}{\simeq} \Br(F)[p^r],\quad \mbox{and}\\
KH_0^{(1)}(X_v,\Z/p^r\Z) &\xrightarrow[f_{\ast}]{\simeq }KH_0^{(1)}(\Spec(F_v),\Z/p^r\Z) \stackrel{\eqref{eq:HBr}}{\simeq} \Br(F_v)[p^r] 
\end{align*}
for each place $v$ of $F$. 
The exactness of the sequence \eqref{seq:KH0}
follows from the theorem of Hasse-Brauer-Noether, the exactness of the sequence\linelabel{com:b}
\[
0\to \Br(F){\red [p^r]} \to \bigoplus_v \Br(F_v){\red [p^r]} \xrightarrow{\sum_v\mathrm{inv}_v} \Z/p^r \to 0. 
\]

In \autoref{sec:RS},
we will consider expressing the above theorem \autoref{thm:main_intro} in terms of \emph{reciprocity sheaves} 
by comparing the Mackey product and the tensor product in the category of modulus presheaves with transfers (see \autoref{cor:killed-rec}).

\subsection*{Notation}
Throughout this note, 
for an abelian group $G$ and $m\in \Z_{\ge 1}$,  
we write $G[m]$ and $G/m$ for the kernel and cokernel of the multiplication by $m$ on $G$ respectively. 

For a field $F$, we denote by $\ch(F)$ its characteristic.

\subsection*{Acknowledgements} 
The authors thank the referee for their careful reading and for the many valuable suggestions that improved our manuscript.
The first author was supported by JSPS KAKENHI Grant Number 24K06672.
The second author was supported by JSPS KAKENHI Grant Number 21K03188.

\section{Mackey products}
\label{sec:Mackey}
In this section, we recall the notion of Mackey product following \cite[Sect.~5]{Kah92a} (see also \cite[Rem.~1.3.3]{IR17}, \cite[Sect.~3]{RS00}).

\subsection*{Mackey product}
A \textbf{Mackey functor} $\cM$ over a field $F$ 
(a cohomological finite Mackey functor over $F$ in the sense of \cite{Kah92a}) is a covariant functor $\cM$ 
from the category of field extensions of $F$ to the category of abelian groups equipped with a contravariant structure for finite extensions over $F$ 
    satisfying some conditions (see \cite{Kah92a}, or \cite{Hir24} for the precise definition). 
The category of Mackey functors forms a Grothendieck abelian category (cf.~\cite[Appendix A]{KY13}) 
and hence any morphism of Mackey functor $f\colon \cM\to \cN$, 
that is, a natural transformation, 
gives the image $\operatorname{Im}(f)$, the cokernel $\Coker(f)$ and so on.

\begin{ex}\label{ex:MFexs}
	\begin{enumerate}
	\item 
	For any endomorphism $f\colon \cM\to \cM$ of a Mackey functor $\cM$, 
	we denote the cokernel by 
	$\cM/f := \Coker(f)$. 
	This Mackey functor is given by 
	\[
	(\cM/f)(E) = \Coker\left(f(E)\colon \cM(E)\to \cM(E)\right).
	\]

	\item 
	A commutative algebraic group $G$ over $F$ induces a Mackey functor by defining $E/F\mapsto G(E)$ for any field extension $E/F$
	 (cf.\ \cite[(1.3)]{Som90}, \cite[Prop.~2.2.2]{IR17}). 
	In particular, 
	the multiplicative group $\Gm$ is a Mackey functor given by 
	$\Gm(E) = E^{\times}$ 
	for any field extension $E/F$. 
	The translation maps are the norm 
	$N_{E'/E}\colon (E')^{\times} \to E^{\times}$ and the inclusion $E^{\times} \to (E')^{\times}$. 
	For the additive group $\Ga$, the translation maps are 
	the trace map $\Tr_{E'/E}:E'\to E$ and the inclusion $E\hookrightarrow E'$. 
	\end{enumerate}
\end{ex}

\begin{dfn}\label{def:otimesM}
    For Mackey functors $\cM_1,\ldots, \cM_n$ over $F$, 
    the \textbf{Mackey product} $\cM_1\otimesM \cdots \otimesM \cM_n$ is defined as follows: For any field extension $F'/F$, 
    \begin{equation}\label{eq:otimesM}
		(\cM_1 \otimesM \cdots \otimesM \cM_n ) (F') := 
		\left.\left(\bigoplus_{E/F':\,\mathrm{finite}} \cM_1(E) \otimes_{\Z} \cdots \otimes_{\Z} \cM_n(E)\right)\middle/\ (\textbf{PF}),\right. 
    \end{equation}
    where (\textbf{PF}) stands for the subgroup generated 
	by elements of the following form: 
	For a finite field extensions $F' \subset E \subset E'$, 
	\[
		 x_1 \otimes \cdots\otimes \tr_{E'/E}(\xi_{i_0}) \otimes \cdots \otimes x_{n} - \res_{E'/E}(x_1) \otimes \cdots \otimes \xi_{i_0} \otimes \cdots \otimes \res_{E'/E}(x_n) \quad 
	\]
	for $\xi_{i_0} \in \cM_{i_0}(E')$ and $x_{i} \in \cM_i(E)$ for $i\neq i_0$.
\end{dfn}

For the Mackey product  $\cM_1\otimesM \cdots \otimesM \cM_n$,  
we write $\set{x_1,\ldots ,x_n}_{E/F'}$ 
for the image of $x_1 \otimes \cdots \otimes x_n \in \cM_1(E) \otimes_{\Z} \cdots \otimes_{\Z} \cM_n(E)$ in the product
$(\cM_1\otimesM \cdots \otimesM \cM_n)(F')$. 
The subgroup (\textbf{PF}) gives a relation of the form 
\begin{equation}
    \label{eq:pf}
\set{x_1, \ldots, \tr_{E'/E}(\xi_{i_0}), \ldots,x_{n}}_{E/F'} = \set{x_1,\ldots, \xi_{i_0},\ldots,x_n}_{E'/F'}
\end{equation}
for $\xi_{i_0} \in \cM_{i_0}(E')$ and $x_{i} \in \cM_i(E)$ for $i\neq i_0$
omitting the restriction maps $\res_{E'/E}$ in the right.
The above equation \eqref{eq:pf} is referred to as \textbf{the projection formula}.

The product $\cM_1\otimesM \cdots \otimesM \cM_n$ is a Mackey functor 
with transfer maps defined as follows:
For a field extension $E/F$, 
the restriction 
$\res_{E/F}\colon (\cM_1\otimesM \cdots \otimesM \cM_n)(F) \to (\cM_1\otimesM \cdots \otimesM \cM_n)(E)$ is defined by 
\[
 	\res_{E/F}(\set{x_1,\ldots ,x_n}_{F'/F})= \sum_{i=1}^n e_i\set{\res_{E_i'/E}(x_1), \ldots , \res_{E_i'/E}(x_n)}_{E_i'/E}, 
\]
where $E\otimes_FF' = \bigoplus_{i=1}^n A_i$ for some local Artinian algebras $A_i$  
 of dimension $e_i$ over the residue field $E_i' := A_i/\mathfrak{m}_{A_i}$.
When $E/F$ is finite, 
the transfer map
$\tr_{E/F}\colon  (\cM_1\otimesM \cdots \otimesM \cM_n)(E) \to (\cM_1\otimesM \cdots \otimesM \cM_n)(F)$  
is given by 
\[
	\tr_{E/F}(\set{x_1,\ldots ,x_n}_{E'/E}) = \set{x_1,\ldots ,x_n}_{E'/F}.
\]
The product $\otimesM$ gives  
a tensor structure in the category of Mackey functors. 

\subsection*{Somekawa \texorpdfstring{$K$}{K}-groups}
For semi-abelian varieties $G_1, \ldots, G_n$ over $F$, 
the \textbf{Somekawa $K$-group} $K(F;G_1,\ldots,G_n)$
attached to $G_1, \ldots, G_n$ is the quotient of 
the Mackey product $(G_1\otimesM \cdots \otimesM  G_n)(F)$ 
by the elements coming from \emph{the Weil reciprocity law}
(see \cite{Som90} for the precise definition). 
By definition, 
there is a surjective homomorphism,
$(G_1\otimesM \cdots \otimesM G_n)(F)\twoheadrightarrow K(F;G_1,\ldots, G_n)$.
The elements of $K(F;G_1,\ldots, G_n)$ will also be denoted as linear combinations of symbols of the form 
$\set{x_1,\ldots, x_n}_{F'/F}$, where $F'/F$ is a finite extension and $x_i\in G_i(F')$ for $i=1,\ldots, n$.

\begin{lem}\label{lem:product}
Let $G_1,\ldots, G_n$ be semi-abelian varieties over $F$ for $n\ge 3$. 
There is a surjective homomorphism 
\[
    \left(K(-;G_1,G_2)\otimesM G_3\otimesM \cdots \otimesM G_n\right)(F) \twoheadrightarrow K(F;G_1,\ldots, G_n),
\]
where $K(-;G_1,G_2)$ is the Mackey functor defined by 
$F'/F \mapsto K(F';G_1,G_2)$. 
\end{lem}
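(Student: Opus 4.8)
The plan is to construct the surjection explicitly on the level of generating symbols and then verify that the defining relations of the Mackey product on the left are respected by the natural map into the Somekawa group on the right. First I would recall that an element of the Mackey product $\left(K(-;G_1,G_2)\otimesM G_3\otimesM \cdots \otimesM G_n\right)(F)$ is a finite $\Z$-linear combination of symbols $\{\kappa, x_3,\ldots,x_n\}_{E/F}$, where $E/F$ is a finite extension, $\kappa\in K(E;G_1,G_2)$, and $x_i\in G_i(E)$ for $i\ge 3$; since the first slot is itself generated by symbols $\{y_1,y_2\}_{E'/E}$ with $E'/E$ finite and $y_j\in G_j(E')$, and using the projection formula \eqref{eq:pf} to push the trace of such a symbol from $E'$ into the first slot, every generator can be rewritten (after refining the field) in the form $\{\{y_1,y_2\}_{E'/E},\, x_3,\ldots,x_n\}_{E/F}$. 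I would then define the candidate map by sending this to $\{y_1,y_2,\res_{E'/E}(x_3),\ldots,\res_{E'/E}(x_n)\}_{E'/F}\in K(F;G_1,\ldots,G_n)$. Surjectivity is then immediate, since the right-hand group is by definition generated by symbols $\{z_1,\ldots,z_n\}_{F'/F}$, which is the image of $\{\{z_1,z_2\}_{F'/F'},\, z_3,\ldots,z_n\}_{F'/F}$.

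Next I would check well-definedness, i.e.\ that the prescription above descends from the free group on such symbols to the Mackey product and lands consistently in the Somekawa quotient. There are three families of relations to verify: (i) the projection-formula relations \textbf{(PF)} of \autoref{def:otimesM} governing the outer Mackey product $\otimesM$ in slots $3,\ldots,n$ together with the first slot; (ii) the relations internal to the Mackey functor $K(-;G_1,G_2)$ in the first slot, namely the bilinearity and \textbf{(PF)}-relations of the Mackey product $(G_1\otimesM G_2)$ and the Weil-reciprocity relations cutting it down to $K(-;G_1,G_2)$; and (iii) the compatibility of transfer maps. For (i) and (ii), the key computation is a compatibility of traces and restrictions under composition of finite extensions, which is exactly what the projection formula \eqref{eq:pf} encodes in $K(F;G_1,\ldots,G_n)$; in each case one pushes the relation through to the common field where all the data live and invokes \eqref{eq:pf} for the $n$-fold product. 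The Weil-reciprocity relations in the first slot map to the corresponding relations defining $K(F;G_1,\ldots,G_n)$ from $(G_1\otimesM\cdots\otimesM G_n)(F)$ (the reciprocity law for a function $f$ on a curve, applied with the remaining slots filled by constants pulled back from the base), so they are killed on the target.

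I expect the main obstacle to be the bookkeeping in step (i)--(ii): the element of the first slot is itself a sum of symbols over possibly different intermediate fields $E'$, so to compare two representations of the same element one must pass to a compositum and track how both the inner trace (in $K(-;G_1,G_2)$) and the outer trace/restriction (in $\otimesM$) interact. The cleanest way to organize this is to first establish the map on the ``flattened'' presentation in which every generator has a single finite extension $E'/F$ and reads $\{\{y_1,y_2\}_{E'/E'},\,x_3,\ldots,x_n\}_{E'/F}$ — possible because of \eqref{eq:pf} and the fact that $\tr_{E'/E'}=\Id$ — and only then check the two \textbf{(PF)}-type families, at which point each reduces to a single application of \eqref{eq:pf}. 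Functoriality of restriction along $E\otimes_F F'\cong\bigoplus_i A_i$ is then a formal consequence, since the same decomposition governs restriction in both the $n$-fold product and the first slot. This yields the desired surjection, completing the proof.
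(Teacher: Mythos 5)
Your proposal is correct and is in substance the same argument as the paper's: the paper simply invokes the associativity of $\otimesM$ (the tensor structure on Mackey functors) to identify $\left((G_1\otimesM G_2)\otimesM G_3\otimesM\cdots\otimesM G_n\right)(F)$ with $\left(G_1\otimesM\cdots\otimesM G_n\right)(F)$ — which packages all of your (\textbf{PF})-bookkeeping in steps (i) and (iii) — and then observes, citing the proof of \cite[Prop.~3.1]{GL24}, exactly the point you identify as the crux, namely that a Weil-reciprocity relation in the $K(-;G_1,G_2)$ slot becomes an instance of the $n$-fold Weil reciprocity with constant entries in slots $3,\ldots,n$ and hence vanishes in $K(F;G_1,\ldots,G_n)$. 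Your hands-on construction on generators is just an unpacking of that diagram chase, and nothing in it would fail.
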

\begin{proof}
As the Mackey product forms a tensor structure in the category of Mackey products, 
we have 
\[
\xymatrix{
\left((G_1\otimesM G_2) \otimesM G_3 \otimesM\cdots \otimesM G_n \right)(F)\ar@{->>}[d] \ar[r]^-{\simeq} & \left(G_1 \otimesM\cdots \otimesM G_n \right)(F)\ar@{->>}[d] \\
\left(K(-;G_1,G_2)\otimesM G_3\otimesM \cdots \otimesM G_n\right)(F)  \ar@{-->}[r] & K(F;G_1,\ldots, G_n)
}
\]
The Weil reciprocity in $K(-;G_1,G_2)$ is vanished in $K(F;G_1,\ldots,G_n)$ 
(see \cite[Proof of Prop.~3.1]{GL24}) the dotted arrow in the above diagram is defined well. 
\end{proof}

\subsection*{Higher Chow groups}

The boundary maps on Milnor $K$-groups give a complex (cf.~\cite[Sect.~1]{Kat86})
\begin{equation}
\label{eq:MKC2}
    \vcenter{
\xymatrix@C=5mm@R=-2mm{
 \cdots \ar[r]^-{\partial} & \displaystyle\bigoplus_{x\in X_{j}}K_{s+j}^M(F(x)) \ar[r]^-{\partial} &\cdots  
 \ar[r]^-{\partial}& \displaystyle\bigoplus_{x\in X_{1}}K_{s+1}^M(F(x)) \ar[r]^-{\partial}  & \displaystyle\bigoplus_{x\in X_{0}}K_{s}^M(F(x)). \\
 & \mbox{\small degree $j$} & & \mbox{\small degree $1$} & \mbox{\small degree $0$} 
}}
\end{equation}
The $0$-th homology group of the above complex \eqref{eq:MKC2} is isomorphic to  
the higher Chow group of $X$ (\cite[Thm.~3]{Kat86b}, \cite[Thm.~5.5]{Akh04}):
\begin{equation}
\label{eq:MK-Chow}    
  \Coker\left(\partial\colon \bigoplus_{x\in X_1}K_{s+1}^M(F(x))\to \bigoplus_{x\in X_0}K_{s}^M(F(x))\right)
  \simeq CH^{d+s}(X,s), 
\end{equation}
where $d = \dim(X)$. 
Here, we slightly recall the construction of the isomorphism above \eqref{eq:MK-Chow}. 
The higher Chow group $CH^{d+s}(X,s)$ is generated by 
closed points of $X\times \cube^s$, 
where $\cube^s = (\mathbb{P}^1\smallsetminus \set{1})^s$. 
By considering the projections $X\times \cube^s\to X$ and $X\times \cube^s \to \cube$, a closed point $P$ of $X\times \cube^s$ 
is determined by a closed point $x \in X$ and 
$b_1,\ldots,b_s \in F(P)$. 
We write 
\begin{equation}\label{eq:P}    
P = x\times b_1\times \cdots \times b_s
\end{equation} 
for simplicity.
The class $[P]$ in $CH^{d+s}(X,s)$ 
corresponds to 
$N_{F(P)/F(x)}(\set{b_1,\ldots,b_s})$ at $x \in X_0$ in 
$\bigoplus_{x\in X_0}K_s^M(F(x))$. 
Conversely, 
for a symbol $\set{b_1,\ldots,b_s} \in K_s^M(F(x))$, 
identifying the isomorphism $F(x)^{\times} \simeq CH^{1}(F(x),1)$, 
the intersection product gives 
$(j_{x})_{\ast}([x]\cdot b_1\cdot \cdots \cdot b_s) \in CH^{d+s}(X,s)$, 
where $j_x \colon X_{F(x)}\to X$. 

The structure map $f\colon X\to \Spec(F)$ induces a map 
\begin{equation}
    \label{eq:fCH}
    f_{\ast}^{CH}\colon CH^{d+s}(X,s) \to CH^s(F,s) \simeq K_s^M(F),  
\end{equation}
where the last isomorphism is the Nesterenko-Suslin/Totaro theorem (\cite{NS89}, \cite{Tot92}). 
The kernel is denoted by 
\[
A^{d+s}(X,s) := \Ker(f_{\ast}^{CH}).
\]

We consider the Mackey functor $\underline{CH}^{d+s}(X,s)$ 
defined by the higher Chow group $\underline{CH}^{d+s}(X,s)(E) = CH^{d+s}(X_E,s)$ for each extension $E/F$, 
where $X_E := X\otimes_F E$.
The transfer maps are given by pull-back and the push-forward map 
along $j:\Spec(E) \to \Spec(F)$  (see, for example \cite[Sect.~4]{Akh04}). 
The map $f_{\ast}^{CH}$ induces a map
$f_{\ast}^{CH}\colon \underline{CH}^{d+s}(X,s) \to K_s^M$ 
 of Mackey functors 
and its kernel is denoted by $\underline{A}^{d+s}(X,s)$. 
\begin{thm}[{\cite[Thm.~6.1]{Akh04}}]\label{thm:Akh2}
There is an isomorphism 
\[
K(F;\underline{CH}_0(X),\overbrace{\Gm,\ldots,\Gm}^s) \xrightarrow{\simeq} CH^{d+s}(X,s),
\]
for $s\ge 0$, where  $\underline{CH}_0(X) = \underline{CH}^{d}(X,0)$ and 
the left hand side is the Somekawa type $K$-group 
which is a quotient of the Mackey product 
$\Big(\underline{CH}_0(X)\otimesM \Gm^{\otimesM s}\Big)(F)$. 
\end{thm}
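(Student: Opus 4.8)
The plan is to construct mutually inverse maps between the Somekawa-type $K$-group $K(F;\underline{CH}_0(X),\Gm^{\otimesM s})$ and $CH^{d+s}(X,s)$, building on the description of $CH^{d+s}(X,s)$ via symbols \eqref{eq:P} recalled above. First I would define the map $\Phi\colon K(F;\underline{CH}_0(X),\Gm^{\otimesM s})\to CH^{d+s}(X,s)$ on symbols by sending $\{z, b_1,\ldots,b_s\}_{E/F}$, with $z\in \underline{CH}_0(X)(E)=CH_0(X_E)$ and $b_i\in E^\times$, to $(\pi_E)_\ast(z\cdot b_1\cdots b_s)$, where $\pi_E\colon X_E\times\square^s\to X\times\square^s$ is the base-change projection and the intersection product uses the identification $E^\times\simeq CH^1(\Spec E,1)$. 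One must check that $\Phi$ is well defined: the projection formula for the push-forward maps $\pi_{E'/E}$ handles the Mackey relations (\textbf{PF}) of \eqref{eq:pf}, and the Weil reciprocity relations in the Somekawa quotient map to the Chow relations coming from curves $C\subset X_E\times\square^{s-1}$ with a rational function, i.e. to zero in $CH^{d+s}(X,s)$; this is the standard computation (compare \cite[Sect.~4--6]{Akh04}).

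Next I would construct the inverse $\Psi$. Since $CH^{d+s}(X,s)$ is generated by classes $[P]$ of closed points $P=x\times b_1\times\cdots\times b_s$ of $X\times\square^s$ as in \eqref{eq:P}, I would set $\Psi([P]) = \{[x]_{F(P)}, b_1,\ldots,b_s\}_{F(P)/F}$, where $[x]_{F(P)}\in CH_0(X_{F(P)})$ is the class of the point $x$ base-changed to $F(P)$ (equivalently, the class of the point of $X_{F(P)}$ determined by $P$). The content is again that the relations defining $CH^{d+s}(X,s)$ — generated by a curve in $X\times\square^{s+1}$ together with a rational function on it, via the boundary maps at the faces $t_i=0,\infty$ — are sent to relations in the Somekawa group; this is where the Weil reciprocity relations of the Somekawa quotient are used, applied to the normalization of such a curve and its function field. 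Having both maps, one checks $\Phi\circ\Psi=\mathrm{id}$ on generators $[P]$ directly, and $\Psi\circ\Phi=\mathrm{id}$ by reducing a general symbol $\{z,b_1,\ldots,b_s\}_{E/F}$ to the case where $z$ is the class of a single closed point (using that $CH_0(X_E)$ is generated by closed points and that $\Psi\circ\Phi$ respects the transfer $\tr_{E/F}$) and then comparing with $\Phi,\Psi$ on point classes.

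Alternatively, and perhaps more cleanly, one can deduce the statement from the known presentation \eqref{eq:MK-Chow} of $CH^{d+s}(X,s)$ as $\Coker\big(\partial\colon\bigoplus_{x\in X_1}K^M_{s+1}(F(x))\to\bigoplus_{x\in X_0}K^M_s(F(x))\big)$, together with the Nesterenko--Suslin/Totaro identification $K^M_s(F(x))\simeq K(F(x);\Gm,\ldots,\Gm)$ and the obvious surjections $\bigoplus_{x\in X_0}K(F(x);\Gm^{\otimesM s})\twoheadrightarrow (\underline{CH}_0(X)\otimesM\Gm^{\otimesM s})(F)$ (send the component at $x$ via the point class $[x]\in CH_0(X_{F(x)})$ and the transfer $\tr_{F(x)/F}$). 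One then has to show that the induced map on the quotient by $\partial$ lands in and identifies with the Somekawa quotient $K(F;\underline{CH}_0(X),\Gm^{\otimesM s})$; the image of $\partial$ corresponds exactly to the Weil reciprocity relations attached to curves, so this matches \autoref{thm:Akh2}.

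\textbf{Main obstacle.} The crux in either approach is the same: verifying that the relations imposed on one side map precisely onto the relations imposed on the other — concretely, that the boundary/tame-symbol relations coming from curves in $X\times\square^{\bullet}$ that cut out $CH^{d+s}(X,s)$ correspond, under $\Psi$, exactly to the Weil reciprocity relations defining the Somekawa quotient of the Mackey product $\big(\underline{CH}_0(X)\otimesM\Gm^{\otimesM s}\big)(F)$, with no relations left over on either side. This requires a careful analysis of the normalization of such curves, the behaviour of $\underline{CH}_0(X)$ as a Mackey functor under the associated finite field extensions, and compatibility of the push-forward $(\pi_E)_\ast$ with specialization at the faces; the remaining verifications (well-definedness modulo (\textbf{PF}), the two composites being the identity on generators) are comparatively routine bookkeeping.
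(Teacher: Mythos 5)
This statement is quoted from Akhtar (\cite[Thm.~6.1]{Akh04}); the paper supplies no proof of its own, so the only meaningful comparison is with Akhtar's argument. Your sketch is a faithful reconstruction of it: the symbol-to-cycle map $\set{z,b_1,\ldots,b_s}_{E/F}\mapsto (\pi_E)_{\ast}(z\cdot b_1\cdots b_s)$ is exactly the construction the paper itself recalls around \eqref{eq:P}--\eqref{eq:MK-Chow} (there written $(j_x)_\ast([x]\cdot b_1\cdots b_s)$), and your ``alternative'' route through the presentation $CH^{d+s}(X,s)\simeq\Coker(\partial)$ of \eqref{eq:MK-Chow} together with Nesterenko--Suslin/Totaro is not really an alternative: it is the route Akhtar actually takes, and the two descriptions you give are two faces of the same proof. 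Your diagnosis of the crux is also correct --- the substance is the matching of the image of $\partial$ (tame-symbol relations along curves $\overline{\set{y}}$ for $y\in X_1$, after normalization) with the Weil reciprocity relations defining the Somekawa quotient of $\big(\underline{CH}_0(X)\otimesM\Gm^{\otimesM s}\big)(F)$, and this uses properness of $X$ so that these curves are proper and Somekawa's reciprocity applies to their function fields. The one caveat is that this key verification is the actual content of Akhtar's Sections 5--6 and is left as a declared obstacle in your write-up, so what you have is an accurate proof plan rather than a proof; as a reconstruction of the cited argument it contains no wrong turns.
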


\section{Extended differential symbol map}
\label{sec:diff}
Let $F$ be a field of $\ch(F) =p>0$.

\subsection*{Differential symbol map}
For a field extension $E/F$, 
the $p$-th power Frobenius $\phi\colon E\to E; a\mapsto a^p$
gives a homomorphism 
\begin{equation}
	\label{eq:wp}
\wp = W_r(\phi)-\Id: W_r(E) \to W_r(E)
\end{equation}
for any $r\ge 1$.
For a Mackey functor $\cM$ over $F$, 
the map $\wp$ induces an endomorphism of the Mackey product 
$(W_r \otimesM \cM) (F)$ 
by setting $\set{\ba, x}_{E/F} \mapsto \set{\wp(\ba),x}_{E/F}$. 
In the following, this endomorphism is denoted by $\wp := \wp\otimes\Id$ and we put
\[
(W_r \otimesM \cM) (F)/\wp := \Coker\left(\wp\colon (W_r \otimesM \cM) (F) \to (W_r \otimesM \cM) (F)\right).
\]

\begin{lem}
\label{lem:normGa}
	Let $\cM$ and $\cN$ be Mackey functors over $F$. 
	
	\begin{enumerate}
		\item 	
	For a finite separable field extension $F'/F$, 
	the transfer map $\tr_{F'/F}\colon (W_r \otimesM \cM)(F')\to (W_r\otimesM \cM)(F)$ 
	is surjective for any $r \ge 1$. 
        \item 
        For a morphism $\cM\to \cN$, 
        if $(\Ga\otimesM \cM)(F)/\wp \to (\Ga\otimesM \cN)(F)/\wp$ is surjective, then so is $(W_r\otimesM \cM) (F)/\wp\to (W_r\otimesM \cN)(F)/\wp$ for any $r\ge 1$.
	\item 
	If we have $(\Ga\otimesM \cM)(F)/\wp  = 0$, then $(W_r\otimesM \cM)(F)/\wp =0$ for any $r\ge 1$. 
	\end{enumerate}
\end{lem}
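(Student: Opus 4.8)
The plan is to prove the three parts by induction on $r$, using the short exact sequence of Mackey functors
\[
0 \to W_1 \xrightarrow{V^{r-1}} W_r \xrightarrow{R} W_{r-1} \to 0,
\]
where $V$ is Verschiebung and $R$ is restriction, and noting that $W_1 = \Ga$. Tensoring with $\cM$ is right exact on Mackey products, so for any Mackey functor $\cM$ we obtain an exact sequence $(\Ga \otimesM \cM)(F) \to (W_r \otimesM \cM)(F) \to (W_{r-1}\otimesM \cM)(F) \to 0$. The maps $V^{r-1}$ and $R$ commute with $\wp$ (since $\wp$ is built from the Witt-vector Frobenius, which commutes with $V$ and $R$), so applying $\Coker(\wp)$ and using right-exactness of cokernel we get an exact sequence
\[
(\Ga \otimesM \cM)(F)/\wp \to (W_r \otimesM \cM)(F)/\wp \to (W_{r-1}\otimesM \cM)(F)/\wp \to 0.
\]

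\textbf{Part (3).} If $(\Ga \otimesM \cM)(F)/\wp = 0$, then the left term above vanishes, so $(W_r \otimesM \cM)(F)/\wp \simeq (W_{r-1}\otimesM \cM)(F)/\wp$; by induction on $r$ (base case $r=1$ being the hypothesis), this is zero for all $r \ge 1$.

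\textbf{Part (2).} Given a morphism $\cM \to \cN$ with $(\Ga \otimesM \cM)(F)/\wp \to (\Ga \otimesM \cN)(F)/\wp$ surjective, I would chase the commutative ladder formed by the two exact sequences above (for $\cM$ and for $\cN$). The left vertical arrow is surjective by hypothesis; by induction on $r$ the right vertical arrow $(W_{r-1}\otimesM\cM)(F)/\wp \to (W_{r-1}\otimesM\cN)(F)/\wp$ is surjective (base case $r=1$ again the hypothesis). A short diagram chase — take $y \in (W_r\otimesM\cN)(F)/\wp$, lift its image in $(W_{r-1}\otimesM\cN)(F)/\wp$ to $(W_{r-1}\otimesM\cM)(F)/\wp$, pull that back to $(W_r\otimesM\cM)(F)/\wp$, compare, and correct the difference (which comes from the image of $(\Ga\otimesM\cN)(F)/\wp$) using surjectivity of the left arrow — yields surjectivity of the middle arrow.

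\textbf{Part (1).} For $F'/F$ finite separable, I want $\tr_{F'/F}$ surjective on $(W_r \otimesM \cM)$. Again induct on $r$. The naturality of the exact sequence with respect to transfer gives a commutative ladder relating $(\Ga\otimesM\cM)(F') \to (W_r\otimesM\cM)(F') \to (W_{r-1}\otimesM\cM)(F')$ to the corresponding sequence over $F$, with vertical maps $\tr_{F'/F}$. By induction the transfer on $(W_{r-1}\otimesM\cM)$ is surjective, so it suffices to treat $r=1$, i.e.\ $\tr_{F'/F}\colon (\Ga \otimesM \cM)(F') \to (\Ga \otimesM \cM)(F)$; then a diagram chase as in part (2) upgrades to general $r$. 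For the $r=1$ case: since $F'/F$ is separable, the trace $\Tr_{F'/F}\colon F' \to F$ is surjective, so pick $c \in F'$ with $\Tr_{F'/F}(c) = 1$; then for any generator $\set{a, x}_{E/F} \in (\Ga\otimesM\cM)(F)$ with $E/F$ finite and $E' := E\cdot F'$ (a finite extension of $E$), I would use the projection formula \eqref{eq:pf} to write $\set{a, x}_{E/F} = \set{\Tr_{E'/E}(\res(a)\cdot \res_{E'/E}(c))\,\cdot\text{(something)}}$ — more precisely, exploit $F$-bilinearity of the additive slot together with $\Tr_{E'/E}(\res_{E'/E}(a)\cdot c') = a \cdot \Tr_{E'/E}(c')$ and choose $c'$ over $E'$ with the right trace — to realize $\set{a,x}_{E/F}$ as $\tr_{F'/F}$ of an element of $(\Ga\otimesM\cM)(E')$, which maps to $(\Ga\otimesM\cM)(F')$. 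The main obstacle is this last bookkeeping: making the projection formula produce a preimage under $\tr_{F'/F}$ requires carefully handling the tensor algebra $E \otimes_F F'$, which over separable $F'/F$ splits as a product of fields, so one works componentwise and the key input is surjectivity of the separable trace on each component. The rest is formal homological algebra.
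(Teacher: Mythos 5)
Your proposal is correct and follows essentially the same route as the paper: induction on $r$ via a short exact sequence of Witt functors tensored with $\cM$ (the paper uses $0\to W_r\to W_{r+1}\to \Ga\to 0$ with $\Ga$ as quotient, you use the companion sequence $0\to \Ga\to W_r\to W_{r-1}\to 0$ with $\Ga$ as sub; the resulting diagram chases are the same), together with right-exactness of $\otimesM$ and of $\Coker(\wp)$. The base case of (1) is also the paper's argument — surjectivity of the separable trace $\Tr_{EF'/E}$ plus the projection formula — and your "trace-one element" bookkeeping is just one way of producing the required preimage $a'$ with $\Tr_{EF'/E}(a')=a$.
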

\begin{proof}
   The short exact sequence 
    $0\to W_{r}(F) \to W_{r+1}(F) \to \Ga(F) \to 0$ induces a right exact sequence 
    \[
     (W_{r}\otimesM \cM)(F)\to  (W_{r+1}\otimesM \cM)(F) \to (\Ga\otimesM\cM)(F)\to 0.
    \]

\noindent 
(1) 
    Take any symbol $\set{a,x}_{E/F}$ in $(\Ga\otimesM \cM)(F)$. 
    Since the field extension $EF'/E$ is separable, 
    the trace map $\Tr_{EF'/E}\colon EF'\to E$ 
     is surjective. 
    There exists $a'\in EF'$ such that $\Tr_{EF'/E}(a') = a$. 
    By the projection formula (\textbf{PF}), we have 
    \[
        \set{a,x}_{E/F} = \set{\Tr_{EF'/E}(a'), x}_{E/F} = \set{a', x}_{EF'/F} = 
        \tr_{F'/F}(\set{a',x}_{EF'/F'}). 
    \]
    The transfer map $\tr_{F'/F}\colon (\Ga \otimesM \cM)(F')\to (\Ga\otimesM \cM)(F)$ is {\red therefore} surjective.\linelabel{com:3}
    Now, we assume the transfer map $\tr_{F'/F}\colon (W_{r}\otimesM \cM)(F') \to (W_{r}\otimesM \cM)(F)$ is surjective for $r \ge 1$. We have a commutative diagram with exact rows: 
    \[
    \xymatrix{
    (W_{r}\otimesM \cM)(F')\ar@{->>}[d]^{\tr_{F'/F}}\ar[r] & (W_{r+1}\otimesM \cM)(F') \ar[r] \ar[d]^{\tr_{F'/F}}&  (\Ga\otimesM\cM)(F')\ar@{->>}[d]^{\tr_{F'/F}} \ar[r] &  0\\
    (W_{r}\otimesM \cM)(F)\ar[r] & (W_{r+1}\otimesM \cM)(F) \ar[r]&  (\Ga\otimesM\cM)(F) \ar[r] &  0.
    }
    \]
    From this diagram, the map $\tr_{F'/F}\colon (W_{r+1}\otimesM \cM)(F')\to (W_{r+1}\otimesM \cM)(F)$ is surjective.

    \noindent
    (2) 
   Consider the following commutative diagram with exact rows:
    \[
    \xymatrix{
    (W_{r}\otimesM \cM)(F)/\wp\ar[d]\ar[r] & (W_{r+1}\otimesM \cM)(F)/\wp \ar[r] \ar[d]&  (\Ga\otimesM\cM)(F)/\wp\ar[d] \ar[r] &  0\\
    (W_{r}\otimesM \cN)(F)/\wp\ar[r] & (W_{r+1}\otimesM \cN)(F)/\wp \ar[r]&  (\Ga\otimesM\cN)(F)/\wp \ar[r] &  0.
    }
    \]
    The induction on $r$ and the assumption implies the middle vertical map in the above diagram is surjective. 
    The assertion (3) also follows from the induction on $r$ and the assumption 
    $(\Ga \otimesM \cM)(F)/\wp = 0$. 
\end{proof}

There is a natural homomorphism 
$\dlog: F^{\times} \to W_r\Omega_F^1; b\mapsto \dlog[b] := \mathrm{d}[b]/[b]$, 
where $[b] = (b,0,\ldots, 0)$. 
The subgroup of $W_r\Omega_F^{n}$ generated by 
$\dlog [b_1] \cdots \dlog [b_n]$ for $b_1,\ldots,b_n\in F^{\times}$ 
is denoted by $W_r\Omega_{F,\log}^n$. 
We put 
\begin{equation}\label{def:H}
	H^n(F,\Z/p^r(m)) = H^{n-m}(F,W_r\Omega_{F^{\sep},\log}^m). 
\end{equation}

\begin{thm}[{Bloch-Gabber-Kato, \cite[Cor.~2.8]{BK86}}]
	\label{thm:BKG}
	For any $r \in \Z_{>0}$ and $n \in \Z_{\ge 0}$, the \textbf{differential symbol map} 
	\[
	s_{F,p^r}^n:K_n^M(F)/p^r \xrightarrow{\simeq} H^n(F,\Z/p^r(n)) = W_r\Omega_{F,\log}^n; \set{b_1,\ldots,b_n}\mapsto \dlog[b_1]\cdots\dlog[b_n]
	\]
	is an isomorphism.
\end{thm}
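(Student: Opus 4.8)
The plan is to disentangle the two assertions in the statement — the identification $H^n(F,\Z/p^r(n))=W_r\Omega^n_{F,\log}$ of the Galois-invariant group with the naive logarithmic group, and the bijectivity of the symbol map onto the latter — and to put the weight of the argument on the second, whose main content is injectivity at level $r=1$. Well-definedness is easy: the rule $\set{b_1,\ldots,b_n}\mapsto\dlog[b_1]\cdots\dlog[b_n]$ is multilinear, and it kills the Steinberg relation — transparently so for $r=1$, where $\dlog[a]\wedge\dlog[1-a]=\tfrac{-1}{a(1-a)}\,\mathrm da\wedge\mathrm da=0$ in $\Omega^2_F$, and in general by the corresponding fact for the de Rham–Witt complex. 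Surjectivity is immediate, since $W_r\Omega^n_{F,\log}$ is by definition generated by these symbols. So everything reduces to injectivity.

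I would make two preliminary reductions. Both $K^M_n(-)/p^r$ and $W_r\Omega^n_{(-),\log}$ commute with filtered colimits of fields, so one may assume $[F:F^p]=p^d$ with $d<\infty$. And one may dévisse in $r$: the short exact sequence $0\to\Omega^n_{F,\log}\to W_r\Omega^n_{F,\log}\to W_{r-1}\Omega^n_{F,\log}\to 0$ and the right exact sequence $K^M_n(F)/p\xrightarrow{p^{r-1}}K^M_n(F)/p^r\to K^M_n(F)/p^{r-1}\to 0$ form a ladder compatible with the maps $s_{F,p^r}^n$, and a diagram chase — using that surjectivity is already known and that the de Rham–Witt row is exact also on the left — propagates bijectivity from levels $r-1$ and $1$ to level $r$. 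This leaves $r=1$. For the identification of the target I would invoke the Cartier–Artin–Schreier sequence, which over a separable closure is short exact,
\[
0\to\Omega^n_{F^\sep,\log}\to Z^n_{F^\sep}\xrightarrow{C-1}\Omega^n_{F^\sep}\to 0,
\]
where $Z^n=\Ker(\mathrm d\colon\Omega^n\to\Omega^{n+1})$ and $C$ is the Cartier operator; taking $G_F$-invariants and using additive Hilbert 90 (vanishing of $H^1(G_F,-)$ on $F^\sep$-vector spaces) identifies $(\Omega^n_{F^\sep,\log})^{G_F}$ with the group of closed, Cartier-fixed $n$-forms over $F$. That this group coincides with the naive $\Omega^n_{F,\log}$ is itself part of the theorem, and is proved in tandem with the injectivity statement below.

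For the heart of the matter — injectivity of $\mathrm d\log\colon K^M_n(F)/p\to\Omega^n_{F,\log}$ — I would pass to a field finitely generated over $\bF_p$, for which the integer $d$ given by $[F:F^p]=p^d$ coincides with the transcendence degree over $\bF_p$, and induct on $d$. In the base case $d=0$ the field $F$ is a finite, hence perfect, extension of $\bF_p$, so $\Omega^n_F=0$ and $K^M_n(F)/p=0$ for $n\ge 1$ (every symbol entry being a $p$-th power), while both sides are $\Z/p$ for $n=0$. For the inductive step I would realize $F$ as a finite extension of a rational function field $K=k(t)$ with $\operatorname{trdeg}(k/\bF_p)=d-1$, and compare Milnor's localization sequence
\[
0\to K^M_n(k)/p\to K^M_n(K)/p\to\bigoplus_{x} K^M_{n-1}(k(x))/p\to 0
\]
with the parallel Gysin sequence for $\Omega^\bullet_{(-),\log}$ along $\mathbb P^1_k$; the symbol maps commute with the tame residue maps, the residue fields $k(x)$ and the constant field $k$ all have transcendence degree $d-1$, and so the five lemma together with the inductive hypothesis yields injectivity over $K$.

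The remaining step — descent along the finite extension $F/K$ — is where I expect the genuine obstacle. When $F/K$ is separable of degree prime to $p$, injectivity over $K$ follows at once from $\operatorname{cores}_{F/K}\circ\res_{F/K}=[F:K]$; but for separable extensions of $p$-power degree, and for inseparable extensions, a different idea is needed, and this is the crux: one invokes Gabber's geometric argument, which spreads the problem out to the function field of a smooth variety over a finite field, exploits the explicit structure of the de Rham complex in that geometric setting, and combines it with a carefully chosen norm relation. Granting this structural input, together with the localization and Gysin sequences, all the remaining ingredients — the two easy directions, the dévissage in $r$, the Cartier-sequence description of the target, and the transcendence-degree induction — fit together formally.
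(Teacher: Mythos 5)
This theorem is quoted in the paper from \cite[Cor.~2.8]{BK86} and no proof is given there, so there is no internal argument to compare against; what can be assessed is whether your outline would constitute a proof. It does not: it is an accurate reconstruction of the \emph{architecture} of the Bloch--Gabber--Kato proof (the easy well-definedness and surjectivity, the reduction to finitely generated fields, the d\'evissage in $r$ to the case $r=1$, the Cartier--Artin--Schreier description of the target), but the entire mathematical content of the theorem sits in the one step you explicitly black-box, namely injectivity of $\dlog\colon K_n^M(F)/p\to\Omega^n_F$ for a general finitely generated field. ``One invokes Gabber's geometric argument'' is a citation, not a proof; nothing in your sketch indicates how the $B_i$-filtration, the Cartier operator, and the norm relations are actually combined, and that combination is precisely where the theorem is hard.

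There is also a directional slip in the inductive step that undermines even the part you present as routine. You prove injectivity over the rational function field $K=k(t)$ via Milnor's localization sequence and the inductive hypothesis, and then need injectivity over the finite extension $F\supset K$. But the transfer trick you cite, $\Cor_{F/K}\circ\res_{F/K}=[F:K]$, shows that $\res_{F/K}$ is injective on $K_n^M(K)/p$ when $[F:K]$ is prime to $p$, and hence lets you pass injectivity of the symbol map \emph{down} from $F$ to $K$ --- the opposite of what your induction requires. Since every finitely generated field of positive transcendence degree arises as a finite extension of a rational function field (not as a subfield of one), going ``up'' from $K$ to $F$ is unavoidable, and it is exactly this ascent --- for $p$-power-degree separable extensions and for inseparable extensions, but in fact already in the prime-to-$p$ case as you have set it up --- that is not addressed. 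As it stands, the proposal reduces the theorem to itself plus an unproved core.
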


\subsection*{Extended differential symbol map}
Following \cite{Kat80}, 
we put 
\begin{equation}
\label{def:Hpr}
	H^{n+1}_{p^r}(F) := H^{n+1}(F,\Z/p^r(n)) = H^1(F,W_r\Omega_{\log}^n).
\end{equation}
By \cite[Sect.~1, Lem.~2]{CSS83}, 
there is a short exact sequence 
\[
0 \to W_r\Omega^{n}_{\log} \to W_r\Omega^{n}\to W_r\Omega^{n}/\mathrm{d}V^{r-1}\Omega^{n-1}\to 0
\]
of \'etale sheaves on $\Spec(F)$. 
The map $\wp:W_r(E)\to W_r(E)$ defined in \eqref{eq:wp} induces the following 
short exact sequence: 
\begin{equation}\label{eq:strH^n}
	0\to H^{n}(F,\Z/p^r(n))\to W_r\Omega_F^{n}\xrightarrow{\wp} W_r\Omega_F^{n}/\mathrm{d}V^{r-1}\Omega_F^{n-1}\to H^{n+1}_{p^r}(F)\to 0 , 
\end{equation}
where the middle $\wp$ is given by $\wp(\mathbf{a}\dlog[b_1]\cdots \dlog [b_{n-1}]) = \wp(\mathbf{a}) \dlog[b_1]\cdots \dlog[b_{n-1}]$ (\cite[Sect.~A.2]{JSS14}). 
Moreover, the following explicit description of $H^{n+1}_{p^r}(F)$ is known.

\begin{thm}[{\cite{Kat82c}, \cite[Sect.~3]{Kat80}, see also \cite[Prop.~A.2.4]{JSS14}}]
\label{thm:Kato}
	The natural homomorphism 
	$\ba\otimes b_1\otimes \cdots \otimes b_{n} \mapsto \ba\dlog[b_1]\cdots \dlog[b_{n}]$ 
	induces an isomorphism 
	\[
	\left(W_r(F)\otimes_\Z (F^{\times})^{\otimes_\Z n}\right)/J \xrightarrow{\simeq} H^{n+1}_{p^r}(F),
	\]
	where 
	$J$ is the subgroup generated by all elements of the form:
	\begin{enumerate}[label=$(\mathrm{\alph*})$]
		\item $\ba\otimes b_1\otimes \cdots \otimes b_{n}$ with $b_i = b_j$ for some $i\neq j$.
		\item $(0,\ldots, 0, a, 0,\ldots ,0)\otimes a\otimes b_2\otimes \cdots \otimes b_{n}$ 
		for some $a \in k$.
		\item $\wp(\mathbf{a})\otimes b_1\otimes \cdots  \otimes b_{n}$.
	\end{enumerate}
\end{thm}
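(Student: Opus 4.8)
The plan is to construct the map out of the four--term exact sequence \eqref{eq:strH^n}, which identifies $H^{n+1}_{p^r}(F)$ with $\Coker\bigl(\wp\colon W_r\Omega_F^n/\mathrm dV^{r-1}\Omega_F^{n-1}\to W_r\Omega_F^n/\mathrm dV^{r-1}\Omega_F^{n-1}\bigr)$. Let
\[
\Psi\colon W_r(F)\otimes_\Z(F^\times)^{\otimes n}\longrightarrow H^{n+1}_{p^r}(F)
\]
be the composite of $\ba\otimes b_1\otimes\cdots\otimes b_n\mapsto\ba\,\dlog[b_1]\cdots\dlog[b_n]\in W_r\Omega_F^n$ with the canonical surjections $W_r\Omega_F^n\twoheadrightarrow W_r\Omega_F^n/\mathrm dV^{r-1}\Omega_F^{n-1}\twoheadrightarrow H^{n+1}_{p^r}(F)$. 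The theorem then splits into three statements: $(\mathrm i)$ $\Psi$ is surjective; $(\mathrm{ii})$ $J\subseteq\Ker\Psi$; $(\mathrm{iii})$ $\Ker\Psi\subseteq J$. I would establish all three by induction on $r$, using the short exact sequence $0\to W_r\to W_{r+1}\to\Ga\to 0$ (as in the proof of \autoref{lem:normGa}) together with the long exact sequence attached to $0\to\Z/p^r\Z\to\Z/p^{r+1}\Z\to\Z/p\Z\to 0$; the base case $r=1$ is the real content and is discussed last.

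For the inductive step I would tensor $0\to W_r(F)\to W_{r+1}(F)\to\Ga(F)\to0$ with $(F^\times)^{\otimes n}$, getting a right--exact sequence; since each generator of $J$ of type $(\mathrm a)$, $(\mathrm b)$, $(\mathrm c)$ is visibly compatible with these maps, the sequence descends to
\[
\bigl(W_r(F)\otimes(F^\times)^{\otimes n}\bigr)/J\to\bigl(W_{r+1}(F)\otimes(F^\times)^{\otimes n}\bigr)/J\to\bigl(\Ga(F)\otimes(F^\times)^{\otimes n}\bigr)/J\to0.
\]
On the other side, $0\to\Z/p^r\to\Z/p^{r+1}\to\Z/p\to0$ yields the comparison $H^{n+1}_{p^r}(F)\to H^{n+1}_{p^{r+1}}(F)\to H^{n+1}_p(F)$, which one checks is compatible with the three $\Psi$'s; exactness of enough of this long sequence follows from the vanishing $H^{n+2}(F,\Z/p^r(n))=0$, which itself falls out of \eqref{eq:strH^n}. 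A diagram chase with the five lemma then propagates $(\mathrm i)$--$(\mathrm{iii})$ from levels $r$ and $1$ to level $r+1$.

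It remains to treat $r=1$. There $W_1=\Ga$, and by \eqref{eq:strH^n} the operator $\wp$ is $C^{-1}-1$ with $C^{-1}\colon\Omega_F^n\to\Omega_F^n/\mathrm d\Omega_F^{n-1}$ the inverse Cartier operator acting by $a\,\dlog b_1\cdots\dlog b_n\mapsto a^p\,\dlog b_1\cdots\dlog b_n$, so $H^{n+1}_p(F)=\Coker(C^{-1}-1)$. Surjectivity of $\Psi$ is immediate, since $\Omega_F^n$ is spanned by the forms $a\,\dlog b_1\cdots\dlog b_n$ (using $\mathrm da=a\,\dlog a$ for $a\in F^\times$). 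For $J\subseteq\Ker\Psi$: $(\mathrm a)$ holds because $\dlog[b]\wedge\dlog[b]=0$; $(\mathrm b)$ because $a\,\dlog a\wedge\dlog b_2\cdots\dlog b_n=\mathrm d(a\,\dlog b_2\cdots\dlog b_n)$ is exact, hence zero in $\Omega_F^n/\mathrm d\Omega_F^{n-1}$; $(\mathrm c)$ because $(a^p-a)\,\dlog b_1\cdots\dlog b_n=(C^{-1}-1)(a\,\dlog b_1\cdots\dlog b_n)$ lies in the image of $\wp$. The remaining inclusion $\Ker\Psi\subseteq J$ I would prove by fixing a $p$--basis of $F$, so that $\Omega_F^\bullet$ is an exterior algebra over $F$ on explicit generators $\mathrm dx_1,\dots,\mathrm dx_m$; in these coordinates $\mathrm d\Omega_F^{n-1}$, the operator $C^{-1}$, and hence $\Coker(C^{-1}-1)$ admit explicit descriptions, reducing the claim to the assertion that the only relations among the monomials $a\,\dlog x_{i_1}\cdots\dlog x_{i_n}$ in $\Coker(C^{-1}-1)$, beyond $(\mathrm a)$ and $(\mathrm b)$, are the Artin--Schreier relations $(\mathrm c)$.

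The step I expect to be the main obstacle is precisely this determination of $\Ker\Psi$ for $r=1$: it is where the Cartier isomorphism enters essentially and where one must keep careful track of the submodule $\mathrm d\Omega_F^{n-1}\subseteq\Omega_F^n$. Everything else — the dévissage in $r$, the compatibility of the subgroups $J$, and the verifications of $(\mathrm a)$, $(\mathrm b)$, $(\mathrm c)$ — is comparatively formal once the $p$--basis computation for $r=1$ is in hand, and the result can also be read off from the arguments of \cite{Kat82c}, \cite[Sect.~3]{Kat80} and \cite[Prop.~A.2.4]{JSS14}.
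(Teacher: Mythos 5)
First, note that the paper does not prove this statement at all: it is quoted from Kato (\cite{Kat82c}, \cite{Kat80}) and from \cite{JSS14}, so there is no in-paper argument to compare yours against. Judged on its own terms, your outline has the right architecture --- identify $H^{n+1}_{p^r}(F)$ with the cokernel of $\wp$ via \eqref{eq:strH^n}, check surjectivity and $J\subseteq\Ker\Psi$, and reduce to $r=1$ by d\'evissage --- and your verifications of surjectivity and of the relations (a), (c) at $r=1$ are correct. But the heart of the theorem is missing: the inclusion $\Ker\Psi\subseteq J$ at $r=1$ is precisely the assertion that the only relations among the monomials $a\,\dlog x_{i_1}\cdots\dlog x_{i_n}$ in $\Coker(C^{-1}-1)$ are (a), (b), (c), and your final paragraph restates this rather than proving it. The real content of Kato's argument is the construction of a well-defined map in the opposite direction, $\Omega^n_F\to\bigl(F\otimes_{\Z}(F^{\times})^{\otimes n}\bigr)/J$, by expanding forms in a $p$-basis, checking independence of all choices, and then showing that $d\Omega^{n-1}_F$ and $(C^{-1}-1)\Omega^n_F$ land in $J$; none of that is carried out, so as written the proposal only establishes that $\Psi$ is a well-defined surjection.

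There is also a logical wrinkle in the d\'evissage. The five lemma can only be invoked once the induced maps on the quotients by $J$ exist, i.e.\ once statement (ii) ($J\subseteq\Ker\Psi$) is already known at level $r+1$; it cannot be used to ``propagate'' (ii). Moreover (ii) at level $r+1$ does not follow from levels $r$ and $1$ for the type (b) generators $[a]\otimes a\otimes b_2\otimes\cdots\otimes b_n$ with Teichm\"uller entry in position $0$: the diagram chase only shows that their image lies in the image of $H^{n+1}_{p^r}(F)\to H^{n+1}_{p^{r+1}}(F)$, not that it vanishes. To see the vanishing one needs the de Rham--Witt identities $[a]\dlog[a]=d[a]$ and $FdV=d$ to check that $dW_r\Omega^{n-1}_F$ dies in $H^{n+1}_{p^r}(F)$ --- a genuine verification, since the quotient appearing in \eqref{eq:strH^n} is only by $dV^{r-1}\Omega^{n-1}_F$, not by all exact forms. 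Both gaps are of course filled in the cited sources, but they constitute the substance of the theorem rather than routine bookkeeping, so the proposal should either supply the $p$-basis computation or be presented honestly as a reduction to \cite{Kat82c}.
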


Through the isomorphism above, \linelabel{com:4} 
we define {\red the} symbol $\AS{\ba,b_1,\ldots ,b_{n}}_F$ as {\red the} element in $H^{n+1}_{p^r}(F)$ 
{\red given} by the image of the residue class represented by $\ba \otimes b_1\otimes \cdots \otimes b_{n}\in W_r(F)\otimes_\Z (F^{\times})^{\otimes_\Z n}$.

We recall that, for any finite field extension $E/F$, 
the trace map $\Tr_{E/F}\colon W_r\Omega^n_E\to W_r\Omega^n_F$ 
defined in \cite[Thm.~2.6]{Rue07} (\cite[Thm.~A.2]{KP21}) 
induces a map 
\begin{equation}
    \label{def:trace}
    \Tr_{E/F}\colon H^{n+1}_{p^r}(E) \to H_{p^r}^{n+1}(F).
\end{equation}

\begin{thm}[{{\red \cite[Thm.~1.1]{Hir24}}}]\label{thm:H24}
    For $r\ge 1$ and $n\ge 0$, 
    there is an isomorphism
    \[
	\tilde{s}_{F,p^r}^{\,n}\colon \Big(W_r \otimesM \Gm^{\otimesM n}\Big) (F)/\wp \xrightarrow{\simeq} H^{n+1}_{p^r}(F); \set{\ba,b_1,\ldots,b_n}_{E/F}\mapsto \Tr_{E/F}(\AS{\ba,b_1,\ldots,b_n}_{E}).
    \]
\end{thm}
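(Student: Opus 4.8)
The plan is to construct the map $\tilde{s}_{F,p^r}^{\,n}$ first, then produce an inverse using the explicit presentation of $H^{n+1}_{p^r}(F)$ from \autoref{thm:Kato}. To define $\tilde{s}$, I would start from the obvious map on generators: send a symbol $\set{\ba,b_1,\ldots,b_n}_{E/F}$ to $\Tr_{E/F}(\AS{\ba,b_1,\ldots,b_n}_{E})$, using the trace \eqref{def:trace}. The first task is to check this is well-defined on the Mackey product $(W_r\otimesM\Gm^{\otimesM n})(F)$: the defining relations are $\Z$-multilinearity in each slot (clear, since the symbol $\AS{-,\ldots,-}_E$ is multilinear by \autoref{thm:Kato}), and the projection formula \eqref{eq:pf}, which for the $W_r$-slot amounts to the identity $\Tr_{E/F}\circ\Tr_{E'/E}=\Tr_{E'/F}$ together with compatibility of the symbol with restriction, and for a $\Gm$-slot amounts to the projection formula for the trace on logarithmic de Rham-Witt forms $\Tr_{E'/E}(x\cdot\res(\omega))=\Tr_{E'/E}(x)\cdot\omega$; these are exactly the properties established in \cite{Rue07}, \cite{KP21}. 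Then I must check that $\wp$ is killed: $\tilde{s}(\set{\wp(\ba),b_1,\ldots,b_n}_{E/F})=\Tr_{E/F}(\AS{\wp(\ba),\ldots}_E)=0$ because relation (c) of \autoref{thm:Kato} makes $\AS{\wp(\ba),\ldots}_E=0$ already in $H^{n+1}_{p^r}(E)$. So $\tilde{s}$ descends to the quotient by $\wp$.

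For the inverse, I would use \autoref{thm:Kato} to present $H^{n+1}_{p^r}(F)=(W_r(F)\otimes_\Z(F^\times)^{\otimes n})/J$ and define a map back to $(W_r\otimesM\Gm^{\otimesM n})(F)/\wp$ by $\ba\otimes b_1\otimes\cdots\otimes b_n\mapsto \set{\ba,b_1,\ldots,b_n}_{F/F}$ (the symbol over the trivial extension $F/F$). This is visibly $\Z$-multilinear, so it is defined on $W_r(F)\otimes_\Z(F^\times)^{\otimes n}$; I then must verify it kills the subgroup $J$. Relation (a), $b_i=b_j$, gives $\set{\ba,\ldots,b_i,\ldots,b_i,\ldots}_{F/F}$, which vanishes in the Mackey product since $\set{b,b}=0$ already in $\Gm^{\otimesM 2}(F)$ (Steinberg-type relation — this needs the standard argument that $\{b,1-b\}=0$ and then $\{b,-b\}=0$, hence $\{b,b\}=0$, all valid in the Mackey product because $K_2^M$ is a quotient of $\Gm^{\otimesM 2}$ and the relevant identities already hold at the Mackey-product level as in \cite{Kah92a}). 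Relation (c), $\wp(\ba)\otimes\cdots$, maps to $\set{\wp(\ba),\ldots}_{F/F}$ which is $\wp$ applied to $\set{\ba,\ldots}_{F/F}$, hence zero in the cokernel. Relation (b), $(0,\ldots,a,\ldots,0)\otimes a\otimes b_2\otimes\cdots$, is the subtlest: I would handle it by the same mechanism used to prove the analogous relation in the classical Bloch-Gabber-Kato presentation — namely a Witt-vector identity (the Artin-Schreier slot) combined with the Steinberg relation in a suitable extension, which I expect to reduce to checking the $r=1$ case $a\,\dlog[a]$ corresponding to $\{a\}\otimes a$ and then bootstrapping via the filtration $0\to W_r\to W_{r+1}\to\Ga\to 0$ and \autoref{lem:normGa}.

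Once both maps are defined, mutual inverseness is a calculation on generators: composing one way sends $\set{\ba,b_1,\ldots,b_n}_{E/F}$ first to $\Tr_{E/F}(\AS{\ba,\ldots}_E)$ and then one needs that this trace, re-expanded via the presentation over $F$, comes back to the original symbol — here I would use the projection formula to rewrite $\Tr_{E/F}(\AS{\ba,b_1,\ldots,b_n}_E)$ in terms of symbols over $F$ and the Mackey-product transfer $\tr_{E/F}$, matching $\set{\ba,b_1,\ldots,b_n}_{E/F}=\tr_{E/F}(\set{\ba,b_1,\ldots,b_n}_{E/E})$; the other composite is immediate on the presentation. The main obstacle I anticipate is the well-definedness of the inverse map, specifically checking that relation (b) of \autoref{thm:Kato} is killed in the Mackey-product cokernel — this is where the interplay between the additive Witt structure and the multiplicative Steinberg relation is most delicate, and it is likely where one genuinely uses that we have quotiented by $\wp$ and invokes the $r=1$ reduction via \autoref{lem:normGa}. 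A secondary technical point is ensuring all the trace/projection-formula compatibilities for $W_r\Omega^\bullet_{\log}$ cited from \cite{Rue07} and \cite{KP21} are stated in the form needed; I would isolate these as a preliminary lemma if they are not already available in the exact shape required.
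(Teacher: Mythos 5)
First, note that the paper itself does not prove this theorem: it is quoted verbatim from the authors' earlier work \cite[Thm.~1.1]{Hir24}, so there is no in-paper argument to compare yours against. That said, your architecture --- define $\tilde{s}_{F,p^r}^{\,n}$ on symbols via the trace of Kato's symbol, check the projection formula using the trace compatibilities of \cite{Rue07}, \cite{KP21}, kill $\wp$ via relation (c) of \autoref{thm:Kato}, and then build an inverse out of the presentation $(W_r(F)\otimes_\Z(F^\times)^{\otimes n})/J\simeq H^{n+1}_{p^r}(F)$ --- is the natural one and is very plausibly the shape of the argument in \cite{Hir24}.

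There is, however, a concrete error in your treatment of relation (a). You assert that $\set{b,b}=0$ ``already in $\Gm^{\otimesM 2}(F)$'' because the relevant identities ``already hold at the Mackey-product level''. This is backwards: $K_2^M$ is a \emph{quotient} of $\Gm^{\otimesM 2}(F)$, so its relations need not lift; the Steinberg relation is precisely what is absent from the plain Mackey product, and the content of Kahn's theorem (\cite[Thm.~4.5]{Hir24}) is that it becomes a relation only after reduction mod $p^r$. Moreover, even in $K_2^M$ one has $\{b,b\}=\{b,-1\}$, which is $2$-torsion but not zero in general. The repair is the route this paper itself takes in the proof of \autoref{prop:surj} (the isomorphism $(\clubsuit)$): first identify $\bigl(W_r\otimesM\Gm^{\otimesM n}\bigr)(F)/\wp$ with $\bigl(W_r\otimesM K_n^M\bigr)(F)/\wp$ using Kahn's theorem together with \cite[Lem.~5.1]{Hir24}, and only then dispose of the repeated-entry relation inside Milnor $K$-theory, where $\{\ldots,b,\ldots,b,\ldots\}=\pm\{-1,b,\ldots\}$ is $2$-torsion and hence vanishes in a group annihilated by $p^r$ (and is literally $0$ when $p=2$ since $-1=1$). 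A smaller gap: ``mutual inverseness is a calculation on generators'' glosses over the fact that $\Tr_{E/F}$ of a symbol need not be a symbol over $F$; the clean way out is to note that $\tilde{s}$ is surjective (the classes $\AS{\ba,b_1,\ldots,b_n}_F$ generate $H^{n+1}_{p^r}(F)$ by \autoref{thm:Kato}) and that your candidate inverse is a well-defined left inverse, which forces both maps to be bijections. Your identification of relation (b) as the genuinely delicate point, and the $r=1$ reduction via \autoref{lem:normGa}, are the right instincts, but as written that step remains a sketch rather than a proof.
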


By Kahn's theorem \cite[Thm.~4.5]{Hir24}, 
the natural surjective map $\Gm^{\otimesM n}(F) \twoheadrightarrow K^M_n(F)$ 
induces an isomorphism $\Gm^{\otimesM n}(F)/p^r \simeq K^M_n(F)/p^r$. 
For the unit $\mathbf{1}$ of $W_r$, 
the natural map 
\[
\Gm^{\otimesM n}(F) \to \Big(W_r\otimesM \Gm^{\otimesM n}\Big)(F); \set{b_1,\ldots,b_n}_{F'/F} \mapsto 
\set{\mathbf{1},b_1,\ldots,b_n}_{F'/F}
\]
makes the following diagram commutative: 
\begin{equation}
\label{diag:dsm}
\vcenter{
\xymatrix{
K_n^M(F)/p^r \ar[r]^-{s_{F,p^r}^n}_-{\simeq}\ar[d]_{\set{\mathbf{1},-}_{F/F}} &  H^n(F,\Z/p^r(n)) \ar[d]^{\AS{\mathbf{1},-}_F} \\ 
 \Big(W_r \otimesM \Gm^{\otimesM n}\Big) (F)/\wp \ar[r]_-{\simeq}^-{\tilde{s}_{F,p^r}^{\,n}} & H^{n+1}_{p^r}(F),
}}
\end{equation}
where the right vertical map is given by 
$\dlog[b_1]\cdots \dlog[b_n] \mapsto \dlog[b_1]\cdots \dlog[b_n] = \AS{\mathbf{1},b_1,\ldots,b_n}_F$.

\subsection*{Reciprocity functors}
{\red 
The \textbf{inverse Cartier operator}\linelabel{com:6'} 
\[
C^{-1}\colon \Omega_F^n \to \Omega_F^n/\mathrm{d}\Omega_F^{n-1}
\]
is given by 
$C^{-1}\left(a\dlog b_1\cdots \dlog b_n\right) = a^p \dlog b_1\cdots \dlog b_n$. 
The map $\wp$ defined in \eqref{eq:strH^n} for $r=1$ is written as}
\begin{equation}\label{eq:def-wp}
 {\red \wp = C^{-1} - \Id \colon \Omega^n_F \to \Omega_F^n/\mathrm{d}\Omega_F^{n-1}.}
\end{equation}
We define a subgroup $B_\infty:=\bigcup_i B_i\subset \Omega_F^n$ as follows;
$B_1=\mathrm{d}\Omega_F^{n-1}$ and for $i\geq 2$, we define $B_i\subset \Omega_F^n$ 
as the preimage of $C^{-1}(B_{i-1})$ 
along the natural surjection $\Omega_F^n \to \Omega_F^n/B_1$. 
This filtration $(B_i)$ satisfies  
\begin{equation}\label{eq:invCar}    
    C^{-1}\colon B_i \xrightarrow{\simeq} B_{i+1}/B_1 
\end{equation}
(cf.~\cite[Sect.~2.2]{Ill79}). 
{\red \linelabel{com:6}Therefore, the map $\wp = C^{-1} - \Id$ (cf.~\eqref{eq:def-wp}) 
maps $B_i$ to $B_{i+1}/B_1$ 
and hence we have $\wp\colon B_\infty \to B_{\infty}/B_1$. 
There is the following commutative diagram with exact rows:}
\begin{equation}\label{diag:Binf}  
\vcenter{
\xymatrix{
0\ar[r] &B_\infty \ar[d]^{\wp}\ar[r] & \Omega_F^{n}\ar[r]\ar[d]^{\wp} & \Omega_F^{n}/B_\infty\ar@{-->}[d]^{\wp}\ar[r] &0\\
0\ar[r] &B_\infty/B_1 \ar[r] & \Omega_F^{n}/B_1\ar[r] & \Omega_F^{n}/B_\infty \ar[r] &0.
}}
\end{equation}

\begin{cor}\label{cor:Binf}
We have an isomorphism
\[
     \Big(\Ga \otimesM \Gm^{\otimesM n} \Big) (F)/\wp \simeq \Coker\left( \wp\colon \Omega_F^n/B_{\infty} \to \Omega_F^n/B_{\infty}\right).
\]
\end{cor}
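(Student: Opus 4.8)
The plan is to deduce \autoref{cor:Binf} from \autoref{thm:H24} (specialised to $r=1$, where $W_1=\Ga$), the four-term exact sequence \eqref{eq:strH^n}, and the commutative diagram \eqref{diag:Binf}. Since $W_1=\Ga$ and $\mathrm{d}V^{0}\Omega_F^{n-1}=\mathrm{d}\Omega_F^{n-1}=B_1$, \autoref{thm:H24} with $r=1$ gives
\[
\Big(\Ga \otimesM \Gm^{\otimesM n}\Big)(F)/\wp \xrightarrow{\ \simeq\ } H^{n+1}_{p}(F),
\]
and \eqref{eq:strH^n} with $r=1$ identifies $H^{n+1}_{p}(F)=\Coker\bigl(\wp\colon \Omega_F^n \to \Omega_F^n/B_1\bigr)$. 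So it suffices to produce an isomorphism between $\Coker\bigl(\wp\colon \Omega_F^n \to \Omega_F^n/B_1\bigr)$ and $\Coker\bigl(\wp\colon \Omega_F^n/B_\infty \to \Omega_F^n/B_\infty\bigr)$, the latter map being the dashed arrow of \eqref{diag:Binf}.

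To get this, I would apply the snake lemma to the commutative diagram \eqref{diag:Binf}, whose rows are exact. It yields an exact sequence terminating in
\[
\Coker\bigl(\wp\colon B_\infty \to B_\infty/B_1\bigr)\longrightarrow \Coker\bigl(\wp\colon \Omega_F^n \to \Omega_F^n/B_1\bigr)\longrightarrow \Coker\bigl(\wp\colon \Omega_F^n/B_\infty \to \Omega_F^n/B_\infty\bigr)\longrightarrow 0,
\]
so the desired isomorphism follows once one knows that $\wp\colon B_\infty\to B_\infty/B_1$ is surjective.

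This surjectivity is the crux of the argument, and I expect it to be the only real obstacle. Given a class $\overline{\beta}\in B_N/B_1 \subseteq B_\infty/B_1$, I would argue by descending induction on $N\ge 1$: for $N=1$ the class is zero, so take $\gamma=0$. For $N\ge 2$, the isomorphism $C^{-1}\colon B_{N-1}\xrightarrow{\simeq}B_N/B_1$ of \eqref{eq:invCar} provides $\gamma_{N-1}\in B_{N-1}$ with $C^{-1}(\gamma_{N-1})=\overline{\beta}$ in $\Omega_F^n/B_1$; writing a tentative preimage as $\gamma=\gamma_{N-1}+\eta$ and using additivity of $\wp=C^{-1}-\Id$, one computes $\wp(\gamma)=\overline{\beta}-\overline{\gamma_{N-1}}+\wp(\eta)$ in $\Omega_F^n/B_1$, so it remains to solve $\wp(\eta)=\overline{\gamma_{N-1}}$ with $\overline{\gamma_{N-1}}\in B_{N-1}/B_1$, i.e. the same problem with $N$ lowered by one. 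Iterating down to level $1$ produces a finite sum $\gamma=\gamma_{N-1}+\cdots+\gamma_1\in B_\infty$ with $\wp(\gamma)=\overline{\beta}$. (For $n=0$ one has $B_\infty=0$ and the statement reduces to $\Ga(F)/\wp=F/\wp(F)$, so nothing is needed.) Assembling the isomorphism from \autoref{thm:H24}, the identification from \eqref{eq:strH^n}, and the snake-lemma isomorphism just obtained proves the corollary.
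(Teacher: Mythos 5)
Your proposal is correct and follows essentially the same route as the paper: both reduce, via \autoref{thm:H24} and the identification $H^{n+1}_p(F)\simeq\Coker\bigl(\wp\colon\Omega_F^n\to\Omega_F^n/B_1\bigr)$ from \eqref{eq:strH^n}, to the surjectivity of the left vertical map $\wp\colon B_\infty\to B_\infty/B_1$ in \eqref{diag:Binf}, and both prove that surjectivity by an induction along the filtration $(B_i)$ using the Cartier isomorphism \eqref{eq:invCar} (your $\gamma_{N-1}$ is exactly the paper's $C(\overline{x})$, and your unwound telescoping sum is the paper's ascending induction read backwards).
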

\begin{proof}
By \autoref{thm:H24}, it is enough to show that $H_{p}^{n+1}(F)\simeq \Coker( \wp\colon \Omega_F^n/B_{\infty} \to \Omega_F^n/B_{\infty})$.
Since the cokernel of the middle vertical map in the diagram \eqref{diag:Binf} is known to be isomorphic to $H_{p}^{n+1}(F)$ by \eqref{eq:strH^n}, our task now is to show that the left vertical map $\wp\colon B_{\infty}\to B_{\infty}/B_1$ is surjective. 

We prove this by induction on $i$ for $B_i$. 
Take any $x\in B_\infty$ and if $x\in B_2$ then 
$C(\overline{x}) \in B_1$, 
where $\overline{x}$ is the residue class of $x$ in $B_2/B_1$ 
and $C$ is the Cartier operator $C\colon B_2/B_1\xrightarrow{\simeq } B_1$ (cf.~\eqref{eq:invCar}), 
we have 
\[
\wp(C(\overline{x})) = C^{-1}(C(\overline{x})) - C(\overline{x}) =  \overline{x}\quad \mbox{in $B_2/B_1$}.
\]
The residue class of $x$ in $B_\infty/B_1$ belongs to $\mathrm{Im}(\wp)$.
We now suppose that
$B_i/B_1$ is in the image of $\wp$ for $i>1$. 
For $x\in B_{i+1}$, 
the {\red Cartier} operator $C\colon B_{i}\xrightarrow{\simeq}B_{i+1}/B_1$ 
gives 
\[
\wp(C(\overline{x})) = \overline{x} - C(\overline{x})
\]
and by induction on $i$ the residue class of $x$ in $B_\infty/B_1$ belongs to $\mathrm{Im}(\wp)$.
\end{proof}
Now, we assume that the field $F$ is a finitely generated field over a perfect field. 
The product $T(\Ga,\Gm^{\times n})(F)$ as the reciprocity functors (\cite[Sect.~4]{IR17}) 
is a quotient of the Mackey product $(\Ga\otimesM \Gm^{\otimesM n})(F)$, and  
one can define an endomorphism 
\[
\wp := \wp \otimes \Id \colon T(\Ga,\Gm^{\times n})(F)\to  T(\Ga,\Gm^{\times n})(F)
\] 
(\cite[Sect.~5.4]{IR17}). 
By \cite[Cor.~5.4.12]{IR17} 
there is a surjective homomorphism $\Omega_F^n/B_{\infty} \twoheadrightarrow T(\Ga,\Gm^{\times n})(F)$.
The above corollary 
gives a surjective homomorphism 
\[
 \Big(\Ga \otimesM \Gm^{\otimesM n} \Big) (F)/\wp \twoheadrightarrow \Coker\left(\wp\colon T(\Ga,\Gm^{\times n})(F)\to T(\Ga,\Gm^{\times n})(F)\right).
\]

\section{Kato homology}
\label{sec:KH}
Let $F$ be a field of  $\ch(F) = p>0$ 
with $[F:F^p]\le p^s$ for some $s$. 
Throughout this section, 
let $X$ be a \emph{projective smooth and geometrically irreducible scheme} over $F$ 
of $d = \dim(X)$.
We denote by $X_j$ the set of points $x$ in $X$ with $\dim(\overline{\set{x}}) = j$ and 
by $F(x)$ the residue field at $x$.  

\subsection*{Kato homology}
We have the following homological complex $KC^{(s)}_{\bullet}(X,\Z/p^r)$ of Bloch-Ogus type (\cite[Prop.~1.7]{Kat86}): 
\[
\xymatrix@C=5mm@R=0mm{
 \cdots \ar[r]^-{\partial} & \displaystyle\bigoplus_{x\in X_{j}}H^{s+j+1}_{p^r}(F(x)) \ar[r]^-{\partial} &\cdots  
 \ar[r]^-{\partial}& \displaystyle\bigoplus_{x\in X_{1}}H^{s+2}_{p^r}(F(x)) \ar[r]^-{\partial}  & \displaystyle\bigoplus_{x\in X_{0}}H^{s+1}_{p^r}(F(x)). \\
 & \mbox{\small degree $j$} & & \mbox{\small degree $1$} & \mbox{\small degree $0$} 
}
\]
The \textbf{Kato homology group} of $X$ (with coefficients in $\Z/p^r$) is defined 
to be the homology group 
\begin{equation}
\label{def:KH}
KH_j^{(s)}(X,\Z/p^r) := H_j(KC_{\bullet}^{(s)}(X,\Z/p^r))
\end{equation}
of the above complex $KC_\bullet^{(s)}(X,\Z/p^r)$. 
By \cite[Sect.~2]{JS09}, 
the structure map $f\colon X\to \Spec(F)$ induces a map 
of complexes $KC_{\bullet}^{(s)}(X,\Z/p^r) \to KC_{\bullet}^{(s)}(\Spec(F),\Z/p^r)$ 
and hence a complex 
\begin{equation}\label{eq:RecCpx}
\bigoplus_{x\in X_1} H_{p^r}^{s+2}(F(x)) \xrightarrow{\partial} \bigoplus_{x\in X_0} H_{p^r}^{s+1}(F(x))  \xrightarrow{\Cor} H_{p^r}^{s+1}(F).
\end{equation}
Here, the map $\Cor$ is defined by the corestrictions $\Cor_{F(x)/F}\colon H_{p^r}^{s+1}(F(x)) \to H_{p^r}^{s+1}(F)$  for $x\in X_0$ (cf.~\cite[Appendix A]{JSS14}). 
By taking the homology group, there is a map 
\begin{equation}\label{eq:fast}    
f_{\ast}^{KH}\colon KH_0^{(s)}(X,\Z/p^r) \to KH_0^{(s)}(\Spec(F),\Z/p^r) = H_{p^r}^{s+1}(F). 
\end{equation}

The map $f_{\ast}^{KH}$ is surjective in general (cf.~\autoref{lem:varphi} below). 
The kernel of the map $f_{\ast}^{KH}$ is the homology group 
of the complex \eqref{eq:RecCpx}. 
The  
diagram \eqref{diag:dsm} induces a map 
from the complex \eqref{eq:MKC2} to $KC_{\bullet}^{(s)}(X,\Z/p^r)$
of complexes. The following diagram is commutative:
\begin{equation}
    \label{diag:dlog}
\xymatrix{
CH^{d+s}(X,s)\ar[d]_{\dlog} \ar[r]^-{f_\ast^{CH}} & K_s^M(F)\ar[d]^{\dlog}\\ 
KH_0^{(s)} (X,\Z/p^r) \ar[r]^-{f_{\ast}^{KH}} & H_{p^r}^{s+1}(F). 
}
\end{equation}

\subsection*{Representation theorems}
Following Akhtar's theorem (\autoref{thm:Akh2}), we give a representation theorem of 
the Kato homology group $KH_0^{(s)}(X,\Z/p^r)$. 

\begin{prop}\label{prop:surj}
	For any $r\ge 1$, there is a surjective homomorphism
	 \[
 \psi\colon \Big(W_r\otimesM\underline{CH}^{d+s}(X,s)\Big)(F)/\wp \twoheadrightarrow KH_0^{(s)}(X,\Z/p^r).
 \]
\end{prop}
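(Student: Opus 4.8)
The plan is to combine the Somekawa-type representation of the higher Chow group (\autoref{thm:Akh2}) with the extended differential symbol map (\autoref{thm:H24}), and then match the two complexes defining $KH_0^{(s)}$ and $CH^{d+s}(X,s)$ degreewise. First I would recall that, by \autoref{thm:Akh2}, $CH^{d+s}(X_E,s)$ is a quotient of $\big(\underline{CH}_0(X)\otimesM \Gm^{\otimesM s}\big)(E)$ for every extension $E/F$, so in particular $\underline{CH}^{d+s}(X,s)$ is a quotient Mackey functor of $\underline{CH}_0(X)\otimesM\Gm^{\otimesM s}$. Applying $W_r\otimesM(-)$ and dividing by $\wp$, and using that $\otimesM$ is a tensor structure on Mackey functors (together with the right-exactness of $W_r\otimesM(-)$, as used in \autoref{lem:normGa}), we get a surjection
\[
\Big(W_r\otimesM\underline{CH}_0(X)\otimesM\Gm^{\otimesM s}\Big)(F)/\wp \twoheadrightarrow \Big(W_r\otimesM\underline{CH}^{d+s}(X,s)\Big)(F)/\wp.
\]
So it suffices to produce a surjection from $\big(W_r\otimesM\underline{CH}_0(X)\otimesM\Gm^{\otimesM s}\big)(F)/\wp$ onto $KH_0^{(s)}(X,\Z/p^r)$; composing gives $\psi$.

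The construction of that surjection goes through the explicit generators of both sides. The group $\underline{CH}_0(X)(E) = CH_0(X_E)$ is generated by closed points, so a symbol in $\big(W_r\otimesM\underline{CH}_0(X)\otimesM\Gm^{\otimesM s}\big)(F)$ has the form $\{\mathbf a, [x], b_1,\ldots,b_s\}_{E/F}$ with $x$ a closed point of $X_E$, $\mathbf a\in W_r(E)$, and $b_i\in E^\times$. I would send this to the class of $\Tr_{E(x)/E}$ applied to the symbol $[\mathbf a, b_1,\ldots,b_s]_{E(x)}\in H^{s+1}_{p^r}(E(x))$, viewed as sitting in the degree-$0$ term $\bigoplus_{y\in X_0}H^{s+1}_{p^r}(F(y))$ via the closed point $x$ (after pushing forward from $X_E$ to $X$), and then take its class in $KH_0^{(s)}(X,\Z/p^r)$. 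Concretely this is the composite of the differential symbol map $\tilde s^{\,s}$ of \autoref{thm:H24} at the residue field, the transfer maps of the Mackey product, and the natural projection onto the cokernel $KH_0^{(s)}$ from \eqref{eq:CokKH}. Surjectivity onto the degree-$0$ term, hence onto its quotient $KH_0^{(s)}(X,\Z/p^r)$, is immediate from \autoref{thm:H24}: every element of $H^{s+1}_{p^r}(F(y))$ is a sum of symbols $\Tr(\{\mathbf a,b_1,\ldots,b_s\})$, and each such symbol is hit.

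The work is in checking well-definedness, i.e. that the assignment kills all the defining relations: the projection formula (\textbf{PF}) for the Mackey product in each of the $W_r$, $\underline{CH}_0(X)$, and $\Gm$ slots; the relations $J$ of \autoref{thm:Kato} (type (a) repeated entries, type (b) the Artin-Schreier relation $[a,a,\ldots]$, type (c) the $\wp$-relation — the last being absorbed by passing to $/\wp$); the Weil reciprocity relations defining the Somekawa quotient, which vanish after the transfer to $KH_0^{(s)}$ precisely because they become boundaries of the complex $KC_\bullet^{(s)}$ coming from codimension-one points of $X\times\square^s$ (this is the Kato-style argument that $CH^{d+s}(X,s)$ maps to the $0$-th Kato homology, shifted by one via \eqref{diag:dsm}); and finally the matching of the two differentials $\partial$, which is exactly the compatibility recorded in diagram \eqref{diag:dlog}. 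The main obstacle is the last point: one must verify that the boundary map $\partial\colon\bigoplus_{x\in X_1}H^{s+2}_{p^r}(F(x))\to\bigoplus_{x\in X_0}H^{s+1}_{p^r}(F(x))$ pulled back through the symbol description agrees with the image of the Milnor $K$-theoretic boundary under $\dlog$, so that the Weil reciprocity elements — which are boundaries in the Chow complex \eqref{eq:MKC2} — land in the image of $\partial$ in $KC_\bullet^{(s)}$. Granting the compatibility \eqref{diag:dlog} and the identification \eqref{eq:MK-Chow}, this reduces to a diagram chase, so the proof is then routine.
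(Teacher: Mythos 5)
Your construction of the symbol-level map and the surjectivity argument via \autoref{thm:H24} match the paper's, but your route differs in one structural choice that creates the main soft spot. The paper works directly with the two-fold product $W_r\otimesM\underline{CH}^{d+s}(X,s)$ and uses the presentation \eqref{eq:MK-Chow} of $CH^{d+s}(X_E,s)$ as the cokernel of the Milnor $K$-theoretic boundary from $(X_E)_1$; the relations internal to the Chow slot are then exactly boundaries, and they are killed by the commutativity of a single square intertwining $\Id\otimes\partial$ (Milnor boundary, with a constant Witt coefficient $\mathbf a\in W_r(E)$) with the Kato boundary $\partial$ via the symbol maps $\tilde s$ (the paper's diagram \eqref{diag:psi-2}); after that only the two projection-formula identities for the two-fold Mackey product remain, checked by explicit symbol computations. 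You instead pass through the triple product $W_r\otimesM\underline{CH}_0(X)\otimesM\Gm^{\otimesM s}$ and Akhtar's Somekawa presentation, which obliges you to kill the Weil reciprocity relations \emph{with Witt-vector coefficients} in $KH_0^{(s)}(X,\Z/p^r)$. You assert this follows from \eqref{diag:dlog}, but \eqref{diag:dlog} only records the case $\mathbf a=\mathbf 1$; what is actually needed is the Witt-coefficient compatibility $\tilde s\circ(\Id\otimes\partial)=\partial\circ\tilde s$ above, from which the vanishing does follow (a Weil reciprocity element is a Milnor boundary by \eqref{eq:MK-Chow}, so $\mathbf a$ tensored with it is sent to a Kato boundary). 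That compatibility is the real crux, and your proposal neither isolates nor proves it; once you have it, the detour through the triple product buys you nothing, so the cleaner course is to drop it and argue on $W_r\otimesM\underline{CH}^{d+s}(X,s)$ directly.

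There is also a logical slip in the opening reduction: from surjections $A\twoheadrightarrow B$ (triple product onto $\big(W_r\otimesM\underline{CH}^{d+s}(X,s)\big)(F)/\wp$) and $A\twoheadrightarrow KH_0^{(s)}(X,\Z/p^r)$, ``composing'' does not produce the required surjection $B\twoheadrightarrow KH_0^{(s)}(X,\Z/p^r)$; you must show the second map factors through the first, i.e.\ that its kernel contains $\Ker(A\to B)$. Your subsequent list of relations to verify does amount to exactly this factorization, so the content is recoverable, but as written the sentence ``it suffices to produce a surjection from the triple product'' is not a valid reduction of the statement, which demands a surjection from the smaller group.
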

\begin{proof}
    For any finite field extension $E/F$, 
    the extended differential symbol map $\tilde{s}_{E,p^r}$ 
    (\autoref{thm:H24})
    induces the map 
    \[
    W_r(E)\otimes_{\Z} K_{s}^M(E) \xrightarrow{ \set{-,-}_{E/E}}  \Big(W_r\otimesM K_s^M\Big)(E)/\wp \stackrel{(\clubsuit)}{\simeq} 
    \Big(W_r\otimesM \Gm^{\otimesM s}\Big)(E)/\wp   \xrightarrow[\simeq]{\tilde{s}_{E,p^r}} H_{p^r}^{s+1}(E)
    \]
    which is also denoted by $\tilde{s}_{E,p^r}$. 
    Here, the isomorphism ($\clubsuit$) follows from 
     $\Gm^{\otimesM s}(F)/p^r \simeq K_s^M(F)/p^r$ (\cite[Thm.~4.5]{Hir24}).
    
    For each finite field extension $E/F$, 
    there is a commutative diagram: 
    \begin{equation}\label{diag:psi-2}
        \vcenter{
        \xymatrix@C=15mm{
		\displaystyle\bigoplus_{\eta \in (X_E)_1}W_r(E)\otimes_\Z K_{s+1}^M(E(\eta))\ar[r]^-{\Id \otimes \partial}\ar[d]_-{\tilde{s}_{E(\eta),p^r}^{s+1}} & \displaystyle\bigoplus_{y \in (X_E)_0}W_r(E)\otimes_\Z  K_s^M(E(y))\ar[d]^{\tilde{s}_{E(y),p^r}^{s}} \\
		\displaystyle\bigoplus_{\eta \in (X_E)_1}H^{s+2}_{p^r}(E(\eta))\ar[r]^{\partial}\ar[d]_-{\Cor} & \displaystyle\bigoplus_{y\in (X_E)_0}H^{s+1}_{p^r}(E(y))\ar[d]^-{\Cor}\\
				\displaystyle\bigoplus_{\xi \in X_1}H^{s+2}_{p^r}(F(\xi))\ar[r]^{\partial} & \displaystyle\bigoplus_{x\in X_0}H^{s+1}_{p^r}(F(x)).
		}
		}
    \end{equation}
    The bottom left vertical map $\Cor$ is defined by the corestriction maps as follows: 
    For each $\eta \in (X_E)_1$, let $\xi \in X_1$ be the image of $\eta$ by the natural map $j\colon X_E\to X$  induced from the extension $F\subset E$. 
    We define $\AS{\ba,b_1,\ldots,b_s}_{E(\eta)} \mapsto \Cor_{E(\eta)/F(\xi)}(\AS{\ba,b_1,\ldots,b_s}_{E(\eta)})$.\linelabel{com:7} 
    The bottom right vertical map $\Cor$ is defined similarly. 

   {\red We have a representation of the higher Chow group  \linelabel{com:a} 
    \begin{equation}\label{eq:MK-ChowE}
    \Coker\left(\partial\colon \bigoplus_{\eta\in (X_E)_1}K_{s+1}^M(E(\eta)) \to \bigoplus_{y\in (X_E)_0}K_s^M(E(y))\right) \simeq CH^{d+s}(X_E,s)
    \end{equation}
    using the Milnor $K$-groups referred in \eqref{eq:MK-Chow}.
    }
    By taking the cokernels of the horizontal maps in \eqref{diag:psi-2}, 
    we obtain  
    \[
	\psi_E\colon W_r(E)\otimes_\Z CH^{d+s}(X_E,s)
        \to KH_0^{s}(X_E,\Z/p^r) \xrightarrow{j_{\ast}} KH_0^{(s)}(X,\Z/p^r).
    \]
    Explicitly, 
    take $\mathbf{a} \in W_r(E)$ and $[Q] \in CH^{d+s}(X_E,s)$ {\red \linelabel{com:8}with 
    $Q = y \times b_1\times \cdots \times b_s$ for 
    $y \in (X_E)_0$ and $b_1,\ldots, b_s \in E(Q)^{\times}$ 
    using the convention introduced in \eqref{eq:P}.}
    Putting $\bb = \set{b_1,\ldots, b_s} \in K_s^M(E(Q))$, 
    we have 
    \begin{align*}        
    \psi_E( \mathbf{a} \otimes [Q]) &= \Cor_{E(y)/F}\left(\tilde{s}_{E(y),p^r}^s(\set{\mathbf{a},N_{E(Q)/E(y)}(\bb)}_{E(y)/E(y)})\right)\\
    &\stackrel{(\mathbf{PF})}{=} \Cor_{E(y)/F}\left(\tilde{s}_{E(y),p^r}^s(\set{\res_{E(Q)/E}(\mathbf{a}),\bb}_{E(Q)/E(y)})\right)\\
    &= \Cor_{E(Q)/F}\left( \AS{\mathbf{a}, b_1, \ldots, b_s}_{E(Q)}\right).
    \end{align*}
    \linelabel{com:8'}{\red Since the higher Chow group $CH^{d+s}(X_E,s)$ is generated by elements of the form $[Q]$ for closed points $Q = y\times b_1\times \cdots \times b_s$ as above, 
    we obtain} a homomorphism  
    \begin{equation}\label{eq:psiE}
        \psi = (\psi_E)_E \colon \bigoplus_{E/F} W_r(E)\otimes_\Z CH^{d+s}(X_E,s)\to KH_0^{(s)}(X,\Z/p^r).
    \end{equation}
    We show that the map $\psi$ kills the projection formula (\textbf{PF}) in the definition of the Mackey product. 
    Our task now is to show the following: 
    Take finite extensions $F\subset E \subset E'$ and we denote by $j\colon X_{E'}\to X_E$ the map 
    induced from $E\hookrightarrow E'$.
    \begin{enumerate}[label=(\alph*)]
        \item For any $\ba' \in W_r(E')$ and $[Q] \in CH^{d+s}(X_E,s)$, we have 
        \[
        \psi_{E'}(\ba'\otimes j^{\ast}([Q])) = \psi_E(\Tr_{E'/E}(\ba')\otimes[Q]).
        \]
    
        \item For any $\ba \in W_r(E)$ and $[Q'] \in CH^{d+s}(X_{E'},s)$, we have 
        \[
        \psi_{E'}(\ba \otimes [Q']) = \psi_E(\ba\otimes j_{\ast}([Q'])).
        \] 
    \end{enumerate}

    For the equality (a), 
    take $\ba' \in W_r(E')$, the higher $0$-cycle $[Q] \in CH^{d+s}(X_E,s)$ 
    is 
    represented by $Q  = y\times b_1\times \cdots \times b_s \in (X_E\times \cube^s)_0$ 
    for finite extensions $F\subset E\subset E'$. 
    The pull-back $j^{\ast}([Q]) = (j\times \Id)^{\ast}([Q])$ is given by the pull-back along $j\times \Id \colon X_{E'}\times \cube^s\to X_E\times \cube^s$. 
    We have 
    \begin{equation}\label{eq:jastQ}
    j^{\ast}([Q]) = [j^{-1}(y)\times b_1\times \cdots \times b_s] = 
    \sum_{y'\mid y} e_{y'} [y'\times b_1\times \cdots \times b_s], 
    \end{equation}
    for some $y' \in (X_{E'})_0$ over $y$ and $e_{y'}\in \Z$. 
    The base change gives $E(Q) \otimes_E {E'} \simeq \prod_{y'} A_{y'}$ 
    for some local Artinian algebras $A_{y'}$ of dimension $e_{y'}$ over ${E'}(Q')$ for each 
    $Q' = y'\times b_1\times \cdots\times b_s$ with 
    $y' \in (X_{E'})_0$ over $y$. 
    The very definition of the Mackey functor $W_r$ gives the equality
    \begin{equation}\label{eq:trres}
        \sum_{y'\mid y}\Tr_{E'(Q')/E(Q)}\left(e_{y'}\res_{E'(Q')/E'}(\ba')\right) 
            = \res_{E(Q)/E}\left(\Tr_{E'/E}(\ba')\right).
    \end{equation}
    Putting $\bb=\set{b_1,\ldots,b_s} \in K_s^M(E(Q))$, we have 
    \begin{align*}
    \psi_{E'}(\ba' \otimes j^{\ast}([Q])) 
      &\stackrel{\eqref{eq:jastQ}}{=} \sum_{y'\mid y} e_{y'}  
        \psi_{E'} ( \ba'\otimes [Q']) \\
        &=  \sum_{y'\mid y} e_{y'} \Cor_{E'(Q')/F}\circ \tilde{s}_{E'(Q'),p^r}^s(\set{\ba', \bb}_{E'(Q')/E'(Q')}) \\
        & = \Cor_{E(Q)/F}\left(  \sum_{y'\mid y} e_{y'}  \tilde{s}_{E(Q),p^r}^s(\set{\ba',\bb}_{E'(Q')/E(Q)}) \right)\\
        & \stackrel{(\mathbf{PF})}{=} \Cor_{E(Q)/F}\left(  \sum_{y'\mid y} e_{y'}  \tilde{s}_{E(Q),p^r}^s(\set{\Tr_{E'(Q')/E(Q)}(\ba'),\bb}_{E(Q)/E(Q)}) \right)\\
    &\stackrel{\eqref{eq:trres}}{=}   \Cor_{E(Q)/F}\left(\AS{\Tr_{E'/E}(\mathbf{a}'), b_1,\ldots,  b_s}_{E(Q)}\right) \\
    & = \psi_E(\Tr_{E'/E}(\mathbf{a}')\otimes [Q]).
    \end{align*}

    To show the equality (b), 
    take $\ba \in W_r(E)$ and $[Q'] \in CH^{d+s}(X_{E'},s)$ 
    with $Q' = y'\times b_1'\times  \cdots \times b_s'$ 
    for finite extensions $F\subset E\subset E'$. 
    Put $\mathbf{b}' = \set{b_1',\ldots ,b_s'} \in K_s^M(E'(Q'))$. 
    Since the isomorphism \eqref{eq:MK-Chow} is canonical, 
    there is a commutative diagram:
    \[
    \xymatrix{
    \displaystyle{\bigoplus_{y'\in (X_{E'})_0} }K_s^M(E'(y')) \ar[r]\ar[d]_-{N_{E'(y')/E(y)}} & CH^{d+s}(X_{E'},s)\ar[d]^{j_{\ast}}  \\
    \displaystyle{\bigoplus_{y\in (X_{E})_0} }K_s^M(E(y)) \ar[r] & CH^{d+s}(X_{E},s). 
    }
    \]
    This gives 
    \begin{equation}
        \label{eq:jastQ'}
        j_{\ast}([Q']) = N_{E'(Q')/E(Q)}(\bb')  \quad \mbox{in $CH^{d+s}(X_E,s)$}.
    \end{equation}
    Then we have 
    \begin{align*}
		\psi_{E'}(\ba\otimes [Q']) 
        &= \Cor_{E'(Q')/F}(\AS{\ba, b'_1, \ldots, b_s'}_{E'(Q')}) \\
		&= \Cor_{E(Q)/F}\circ \Cor_{E'(Q')/E(Q)} \circ \tilde{s}_{E'(Q'),p^r}^s\left(\set{\ba, \bb'}_{E'(Q')/E'(Q')}\right) \\
        &\stackrel{(\mathbf{PF})}{=}\Cor_{E(Q)/F} \circ \tilde{s}_{E(Q),p^r}^s \left(\set{\ba, N_{E'(Q')/E(Q)}(\bb')}_{E(Q)/E(Q)}  \right) \\
		&\stackrel{\eqref{eq:jastQ'}}{=}  
        \psi_{E}(\ba\otimes j_{\ast}[Q']).
    \end{align*}
    {\red \linelabel{com:9}Recall that the target $KH_0^{(s)}(X,\Z/p^r)$ of the map $\psi$ (cf.~\eqref{eq:psiE}) is a quotient of $\bigoplus_{x\in X_0}H^{s+1}_{p^r}(F(x))$.  
    The latter group is isomorphic to $\bigoplus_{x\in X_0}(W_r\otimesM K_s^M)(F(x))/\wp$ 
    by \autoref{thm:H24} and \cite[Thm.~4.5]{Hir24} as noted in the beginning of this proof.}
    From the construction of the map $\psi$, 
    the induced homomorphism $\psi\colon (W_r\otimesM \underline{CH}^{d+s}(X,s))(F) \to KH_0^{(s)}(X,\Z/p^r)$ is surjective. 
    Finally, the extended differential symbol kills the image of $\wp$ and hence 
    $\psi$ above induces the required homomorphism. 
\end{proof}

The map $f_{\ast}^{CH} \colon \underline{CH}^{d+s}(X,s) \to K_s^M$ 
induces 
\begin{equation}
    \label{eq:fastM}
    f_{\ast}^M := \Id\otimes {f_{\ast}^{CH}}\colon \Big(W_r\otimesM \underline{CH}^{d+s}(X,s)\Big)(F)/\wp \to \Big(W_r\otimesM K_s^M\Big)(F)/\wp.
\end{equation}
There is a commutative diagram: 
\begin{equation}
    \label{diag:fastM-KH}
\vcenter{
\xymatrix@C=15mm{
\Bigl(W_r\otimesM \underline{CH}^{d+s}(X,s)\Bigr)(F)/\wp \ar@{->>}[d]_{\psi}\ar[r]^-{f_{\ast}^M} & \Big(W_r\otimesM K_s^M\Big)(F)/\wp \ar[d]^{\simeq} \\
KH_0^{(s)}(X,\Z/p^r)\ar[r]_-{f_{\ast}^{KH}}& H_{p^r}^{s+1}(F).
}}
\end{equation}
Here, the right vertical map is bijective because of 
$\Gm^{\otimesM s}/p^r \simeq  K_s^M/p^s$ (cf.~\cite[Cor.~5.11]{Hir24}).

\begin{lem}\label{lem:varphi}
The homomorphisms 
$f_{\ast}^M$ is surjective. 
In particular, $f_{\ast}^{KH}$ is surjective.
\end{lem}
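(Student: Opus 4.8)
The plan is to show $f_{\ast}^M$ is surjective by passing to a finite separable extension $E/F$ over which $X$ acquires a rational point, and then transferring back down; the surjectivity of $f_{\ast}^{KH}$ will then follow at once from the commutative square \eqref{diag:fastM-KH}, whose right vertical map is an isomorphism and whose left vertical map $\psi$ is surjective by \autoref{prop:surj}.

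First I would fix a closed point $x_0\in X$ whose residue field $E:=F(x_0)$ is \emph{separable} over $F$. Such a point exists because $X$, being smooth over $F$, is geometrically reduced, so its closed points with separable residue field are dense (when $\dim X=0$ we simply have $X=\Spec F$ and $E=F$). The canonical morphism $\Spec E\to X$ paired with $\Id_{\Spec E}$ gives a section of $X_E=X\times_F E\to\Spec E$, and hence an $E''$-rational point on $X_{E''}$ for every finite extension $E''/E$.

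Next I would observe that whenever $X_{E''}$ has a rational point $\iota\colon\Spec E''\to X_{E''}$, the proper pushforward $f_{\ast}^{CH}\colon CH^{d+s}(X_{E''},s)\to CH^{s}(\Spec E'',s)=K_s^M(E'')$ is split surjective, since $X$ is projective and composing with $\iota$ downstairs is the identity. Evaluating the Mackey product (formed over $F$) at $E$, the group $\bigl(W_r\otimesM\underline{CH}^{d+s}(X,s)\bigr)(E)$ is generated by symbols $\set{\ba,\alpha}_{E''/E}$ with $\ba\in W_r(E'')$ and $\alpha\in CH^{d+s}(X_{E''},s)$, which $f_{\ast}^M$ carries to $\set{\ba,f_{\ast}^{CH}(\alpha)}_{E''/E}$; combined with the previous sentence this makes $f_{\ast}^M$ surjective over $E$, hence also after passing to the $\wp$-cokernels.

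Finally I would transfer down: since $E/F$ is finite separable, \autoref{lem:normGa}~(1) (applied with $\cM=K_s^M$) gives that $\tr_{E/F}\colon\bigl(W_r\otimesM K_s^M\bigr)(E)\to\bigl(W_r\otimesM K_s^M\bigr)(F)$ is surjective, and it remains so on the $\wp$-cokernels because $\tr_{E/F}$ commutes with $\wp=\wp\otimes\Id$ (they act on different slots of a symbol). As $f_{\ast}^{CH}$ is a morphism of Mackey functors, $f_{\ast}^M$ commutes with $\tr_{E/F}$, so combining surjectivity of $f_{\ast}^M$ over $E$ with surjectivity of $\tr_{E/F}$ forces $f_{\ast}^M$ to be surjective over $F$; then the commutative square \eqref{diag:fastM-KH} shows $f_{\ast}^{KH}\circ\psi$ is surjective, whence $f_{\ast}^{KH}$ is surjective. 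The only delicate point will be the existence of the separable closed point: this is where the imperfectness of $F$ allowed here (as for local and global fields of characteristic $p$) has to be handled rather than assumed away, although it is standard for smooth $F$-schemes; everything else is formal.
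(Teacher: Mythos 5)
Your proof is correct and follows essentially the same route as the paper's: both hinge on a finite separable extension $F'/F$ with $X(F')\neq\emptyset$ (\cite[Prop.~3.2.20]{Liu02}), the resulting surjectivity of $f_{\ast}^{CH}$ after base change, and descent to $F$ via the projection formula / surjectivity of the trace for separable extensions (\autoref{lem:normGa}). The only organizational difference is that you prove surjectivity of $f_{\ast}^M$ at the level of the full Mackey product and then pass to $\wp$-cokernels, whereas the paper first reduces to $r=1$ by \autoref{lem:normGa}~(2) and uses the generation of $\big(\Ga\otimesM K_s^M\big)(F)/\wp$ by symbols $\set{a,x}_{F/F}$ from \cite[Cor.~5.11]{Hir24}; both arguments are valid.
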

\begin{proof}
First, we show that $f_{\ast}^M$ is surjective. 
By \autoref{lem:normGa} (2), we may assume $r=1$. 
By \cite[Cor.~5.11]{Hir24}, 
$(\Ga\otimesM K_s^M)(F)/\wp$ is generated by elements of the form $\set{a,x}_{F/F}$. 
Take a symbol $\set{a,x}_{F/F}$. 
There exists a finite separable field extension $F'/F$ such that 
$X(F') \neq \emptyset$ (\cite[Prop.~3.2.20]{Liu02}). 
For the base change $X_{F'} = X\otimes_F F'$, 
the map $f_{\ast}^{CH}\colon CH^{d+s}(X_{F'},s) \to K_s^M(F')$ is surjective 
because of $X_{F'}(F') \simeq X(F') \neq \emptyset$. 
Therefore, there exists $\xi'\in CH^{d+s}(X_{F'},n)$ such that 
$\res_{F'/F}(x) = f_{\ast}^{CH}(\xi')$. 
As the extension $F'/F$ is separable, there exists $a'\in F'$ such that $\Tr_{F'/F}(a') = a$. 
By the projection formula (\textbf{PF}), we have 
\[
\set{a,x}_{F/F} = \set{\Tr_{F'/F}(a'), x}_{F/F}
    = \set{a', \res_{F'/F}(x)}_{F'/F}
    = \set{a', f_{\ast}^{CH}(\xi')}_{F'/F}.
\]
This implies that $\set{a', \xi'}_{F'/F} \in (\Ga\otimes \underline{CH}^{d+s}(X,n))(F)/\wp$ 
maps to $\set{a,x}_{F/F}$ and hence 
$f_{\ast}^M$ is surjective. 

The last assertion follows from the commutative diagram \eqref{diag:fastM-KH}.
\end{proof}

\begin{prop}
\label{lem:isom}
Let $F$ be a field of  $\ch(F) = p>0$ 
with $[F:F^p]\le p^s$ for some $s$. 
Let $X$ be a \emph{projective smooth and geometrically irreducible scheme} over $F$ of $d = \dim(X)$.
    {\red For a non-negative integer $t$,}
    we consider the following conditions: 
    \begin{itemize}
    \item[{\red $(\mathrm{Van}_t)$}] $\Big(\Ga\otimesM \underline{A}^{d+t}(X,t)\Big)(F)/\wp = 0$.  
    \item[$(\mathrm{Pt})$] There exists a finite separable field extension $F'/F$ such that $X(F')\neq \emptyset$ and the extension degree $[F':F]$ is prime to $p$.
    \item[{\red $(\mathrm{Cor}_t)$}] For any finite separable field extension $F'/F$, 
    the corestriction $\Cor_{F'/F}\colon H^{t+1}_{p}(F') \to H^{t+1}_{p}(F)$ is injective. 
    \end{itemize}
    If we assume the condition $(\mathrm{Pt})$ or $(\mathrm{Cor}_s)$ {\red for the integer $s$}, 
    then there is a right exact sequence \linelabel{com:10}
         \begin{equation}
	   \label{seq:A}
        \frac{\Big(W_r\otimesM \underline{A}^{d+s}(X,s)\Big)(F)}{\wp} \to \frac{\Big(W_r\otimesM \underline{CH}^{d+s}(X,s)\Big)(F)}{\wp} \xrightarrow{f_{\ast}^M } \frac{\Big(W_r\otimesM K_s^M\Big)(F)}{\wp}{\red \to 0}.
    \end{equation}
    Furthermore, if we additionally assume the condition $(\mathrm{Van}_s)$, then the map 
    \[
        \psi\colon \Big(W_r\otimesM\underline{CH}^{d+s}(X,s)\Big)(F)/\wp \xrightarrow{\simeq} KH_0^{(s)}(X,\Z/p^r)
    \]
    and the natural map 
    \[
    f_{\ast}^{KH}\colon KH_0^{(s)}(X,\Z/p^r) \xrightarrow{\simeq} KH_0^{(s)}(\Spec(F),\Z/p^r) = H_{p^r}^{s+1}(F)
    \]
    are isomorphisms. The complex \eqref{eq:RecCpx} is exact. 
\end{prop}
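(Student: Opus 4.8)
The plan is to reduce the whole statement to the single claim that \eqref{seq:A} is right exact, and then to read off the two isomorphisms and the exactness of \eqref{eq:RecCpx} formally. Granting \eqref{seq:A}: if in addition $(\mathrm{Van}_s)$ holds, then $(\Ga\otimesM\underline{A}^{d+s}(X,s))(F)/\wp=0$ forces $(W_r\otimesM\underline{A}^{d+s}(X,s))(F)/\wp=0$ by \autoref{lem:normGa}~(3), so the left-hand term of \eqref{seq:A} vanishes and $f_{\ast}^{M}$ becomes injective; combined with its surjectivity from \autoref{lem:varphi}, $f_{\ast}^{M}$ is an isomorphism. Feeding this into the commutative square \eqref{diag:fastM-KH}, whose right vertical arrow is an isomorphism and whose left vertical arrow $\psi$ is surjective by \autoref{prop:surj}, shows that $f_{\ast}^{KH}\circ\psi$ is an isomorphism; surjectivity of $\psi$ then forces $\psi$ to be injective, hence bijective, and therefore $f_{\ast}^{KH}$ is an isomorphism. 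Finally, the homology of the complex \eqref{eq:RecCpx} at its middle term is exactly $\Ker(f_{\ast}^{KH})$, which now vanishes, while exactness at $H_{p^r}^{s+1}(F)$ is the surjectivity of $\Cor$, i.e.\ of $f_{\ast}^{KH}$, again \autoref{lem:varphi}; so \eqref{eq:RecCpx} is exact.

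It remains to prove \eqref{seq:A}. The terminal surjectivity is \autoref{lem:varphi}, so only exactness at the middle is at stake. Let $\mathcal{C}\subseteq K_s^M$ be the image of $f_{\ast}^{CH}\colon\underline{CH}^{d+s}(X,s)\to K_s^M$, so that $0\to\underline{A}^{d+s}(X,s)\to\underline{CH}^{d+s}(X,s)\to\mathcal{C}\to0$ and $0\to\mathcal{C}\to K_s^M\to Q\to0$ are short exact in Mackey functors, with $Q:=K_s^M/\mathcal{C}$. Since $-\otimesM W_r$ is right exact (as used in the proof of \autoref{lem:normGa}) and so is $\Coker(\wp)$, applying the functor $T:=(W_r\otimesM-)(F)/\wp$ to the first sequence identifies the image of $(W_r\otimesM\underline{A}^{d+s}(X,s))(F)/\wp$ with the kernel of the surjection $(W_r\otimesM\underline{CH}^{d+s}(X,s))(F)/\wp\twoheadrightarrow(W_r\otimesM\mathcal{C})(F)/\wp$, through which $f_{\ast}^{M}$ factors. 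Hence exactness of \eqref{seq:A} at the middle is equivalent to the canonical map $g\colon(W_r\otimesM\mathcal{C})(F)/\wp\to(W_r\otimesM K_s^M)(F)/\wp$ being injective; since $g$ is surjective (its cokernel is $(W_r\otimesM Q)(F)/\wp$, which vanishes by \autoref{lem:varphi}), the point is to see that $g$ is an isomorphism.

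Here the hypotheses $(\mathrm{Pt})$ and $(\mathrm{Cor}_s)$ enter separately. Using that $CH^{d+s}(X_E,s)$ is generated by the cycles $y\times b_1\times\cdots\times b_s$ recalled before \eqref{eq:MK-Chow} and that the residue field of such a $y$ carries an $F$-point of $X$, one gets $\mathcal{C}(E)=K_s^M(E)$ whenever $X(E)\neq\emptyset$. Under $(\mathrm{Pt})$ this applies to every $E$ containing $F'$, so $(W_r\otimesM\mathcal{C})(F')=(W_r\otimesM K_s^M)(F')$ and $(W_r\otimesM Q)(F')=0$; all groups involved are $p^r$-torsion while $[F':F]$ is prime to $p$, so $\tr_{F'/F}\circ\res_{F'/F}$ is multiplication by $[F':F]$, an automorphism of each of these groups, whence the restrictions $\res_{F'/F}$ are injective on $(W_r\otimesM\mathcal{C})(F)/\wp$, on $(W_r\otimesM K_s^M)(F)/\wp$ and on $(W_r\otimesM Q)(F)/\wp$. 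A chase of the square comparing $g$ with the isomorphism $(W_r\otimesM\mathcal{C})(F')/\wp\cong(W_r\otimesM K_s^M)(F')/\wp$ (an equality, since the two Mackey functors agree on all finite extensions of $F'$) along these injections then shows $g$ injective, and $(W_r\otimesM Q)(F)/\wp\hookrightarrow(W_r\otimesM Q)(F')=0$ gives surjectivity. This case is essentially formal once $\mathcal{C}$ is understood.

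Under $(\mathrm{Cor}_s)$ the argument is parallel but harder. By \autoref{lem:normGa}~(1) together with \autoref{thm:H24} and \cite[Thm.~4.5]{Hir24}, the corestriction $\Cor_{F'/F}\colon H_{p^r}^{s+1}(F')\to H_{p^r}^{s+1}(F)$ is surjective for every finite separable $F'/F$; combined with $(\mathrm{Cor}_s)$---first for $r=1$, then for general $r$ by reducing along $0\to W_r\to W_{r+1}\to\Ga\to0$ in the style of the proof of \autoref{lem:normGa}---it becomes an isomorphism. On the other hand $(W_r\otimesM\mathcal{C})(F)/\wp$ is generated by corestrictions of symbols from finite separable extensions $L/F$ with $X(L)\neq\emptyset$ (one reduces to separable $L$ by peeling off the purely inseparable part of the residue field of the relevant closed point), so the isomorphy of all these corestrictions forces $g$ injective. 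I expect this last step---the reduction in $r$ and the control of purely inseparable residue extensions of closed points of $X$---to be the main obstacle of the whole argument; everything else is formal or already available from the earlier lemmas.
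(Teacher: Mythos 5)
Your architecture coincides with the paper's: everything is reduced to the right exactness of \eqref{seq:A}, which in turn is reduced to the injectivity of the comparison map $g$ on the Mackey image $\mathcal{C}=\operatorname{Im}(f_{\ast}^{CH})$, and the two hypotheses are treated by a prime-to-$p$ transfer/restriction argument and by a corestriction diagram chase, respectively. The first paragraph and the $(\mathrm{Pt})$ case are sound (the paper instead passes through the exact sequence of Mackey functors modulo $p^r$ and \cite[Lem.~5.1]{Hir24}, but your split injectivity of $\res_{F'/F}$ for prime-to-$p$ degree, valid because these are cohomological Mackey functors killed by $p^r$, achieves the same end).

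The one step that does not close as written is the $(\mathrm{Cor}_s)$ endgame. From ``$(W_r\otimesM\mathcal{C})(F)/\wp$ is generated by corestrictions of symbols from various finite separable $L/F$ with $X(L)\neq\emptyset$'' together with ``each $\Cor_{L/F}$ is an isomorphism'' one cannot conclude that $g$ is injective: an element of $\Ker(g)$ is a \emph{sum} of transfers from different $L$'s, and the vanishing of the sum of their images in $H^{s+1}_{p^r}(F)$ says nothing about the individual summands. You must lift a kernel element to a \emph{single} extension. This is exactly what \autoref{lem:normGa}~(1) supplies: for one fixed finite separable $F'/F$ with $X(F')\neq\emptyset$ (which exists by \cite[Prop.~3.2.20]{Liu02}), the transfer $\tr_{F'/F}\colon(W_r\otimesM\mathcal{C})(F')\to(W_r\otimesM\mathcal{C})(F)$ is already surjective for the bare reason that $F'/F$ is separable; no decomposition of closed points and no ``peeling off of purely inseparable residue extensions'' is needed, so the step you single out as the main obstacle is in fact immediate. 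With that one $F'$ in hand the intended chase works and is the paper's proof: lift $x\in\Ker(g_F)$ to $x'$ over $F'$; note $g_{F'}$ is an isomorphism because $\mathcal{C}$ and $K_s^M$ agree on all finite extensions of $F'$; and use the injectivity of $\Cor_{F'/F}$ on $H^{s+1}_{p^r}$ (obtained, as you correctly indicate, by induction on $r$ from $(\mathrm{Cor}_s)$) to get $x'=0$, hence $x=\tr_{F'/F}(x')=0$.
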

\begin{proof}
First, we assume the condition (Pt), that is, 
there exists a finite separable extension $F'/F$ such that 
$X(F')\neq\emptyset$ and $[F':F] = m$ is prime to $p$. 
By $X(F')\neq \emptyset$, 
the map $f_{\ast}^{CH}\colon CH^{d+s}(X_{F'},s) \to K_s^M(F')$ is surjective over $F'$. 
Consider the following commutative diagram:
\[
\xymatrix{
CH^{d+s}(X_{F'},s)/p^r \ar[d]_{\tr_{F'/F}}\ar@{->>}[r]^-{f^{CH}_{\ast}} & K_s^M(F')/p^r\ar[d]^{N_{F'/F}} \\
CH^{d+s}(X,s)/p^r \ar[r]^-{f^{CH}_{\ast}} & K_s^M(F)/p^r.
}
\]
Since $[F':F]= m$ is prime to $p$, the compositions 
$\tr_{F'/F}\circ \res_{F'/F} = m$ on $CH^{d+s}(X,s)/p^r$ 
and $N_{F'/F}\circ \res_{F'/F}  = m$ on $K_s^M(F)/p^r$ are bijective. 
The norm map $N_{F'/F}\colon K_s^M(F')/p^r \to K_s^M(F)/p^r$ is surjective, 
so is $f^{CH}_{\ast}\colon CH^{d+s}(X,s)/p^r\to K_s^M(F)/p^r$. 
One can extend this map to a surjective map $f_{\ast}^{CH}\colon \underline{CH}^{d+s}(X,s)/p^r\to K_s^M/p^r$. 
In particular, 
\[
\underline{A}^{d+s}(X,s)/p^r \to \underline{CH}^{d+s}(X,s)/p^r \xrightarrow{f^{CH}_{\ast}} K_s^M/p^r \to 0 
\]
is exact.
From the isomorphisms  
\begin{gather*}
    \Big(W_r/\wp\otimesM K_s^M/p^r\Big) (F) \simeq \Big(W_r\otimesM K_s^M\Big)(F)/\wp,\quad \mbox{and}\\
    \Big(W_r/\wp\otimesM \underline{CH}^{d+s}(X,s)/p^r\Big) (F) \simeq \Big(W_r\otimesM \underline{CH}^{d+s}(X,s)\Big)(F)/\wp    
\end{gather*}
(cf.~\cite[Lem.~5.1]{Hir24}), 
the sequence \eqref{seq:A} is right exact.

Next, we assume the condition ($\mathrm{Cor}_s$) for $t =s$. 
Take a finite separable field extension $F'/F$ such that 
$X(F') \neq \emptyset$  (\cite[Prop.~3.2.20]{Liu02}).  
From the argument just above, 
the sequence \eqref{seq:A} is right exact over $F'$. 
There is a commutative diagram with exact rows:\linelabel{com:11}
\[
\xymatrix{
0 \ar[r] & H^{s+1}_{p^{r-1}}(F') \ar[r]\ar[d]^{\Cor_{F'/F}} & H^{s+1}_{p^{r}}(F')\ar[r]\ar[d]^{\Cor_{F'/F}} & H^{s+1}_p(F')\ar[d]^{\Cor_{F'/F}} \\
0 \ar[r] & H^{s+1}_{p^{r-1}}(F) \ar[r] & H^{s+1}_{p^{r}}(F)\ar[r] & H^{s+1}_p(F) 
}
\]
(for the exactness of the horizontal sequences, see \cite[Proof of Thm.~5.8]{Hir24}). 
By the assumption, and the induction on $r$, 
the corestriction $\Cor_{F'/F}\colon H^{s+1}_{p^r}(F') \to H^{s+1}_{p^r}(F)$ 
is injective.
By \autoref{lem:varphi}, 
$f_{\ast}^M$ is surjective so that 
the natural map 
$\Big(W_r\otimesM \operatorname{Im}(f_{\ast}^{CH})\Big)(F)/\wp \twoheadrightarrow \Big(W_r\otimesM K_s^M\Big)(F)/\wp$ is surjective.
Consider the following commutative diagram: \linelabel{com:12}
\[
\xymatrix{
\Big(W_r\otimesM\operatorname{Im}(f_{\ast}^{CH})\Big)(F')/\wp \ar@{->>}[d]_{\tr_{F'/F}}\ar[r]^-{\simeq} &\Big(W_r\otimesM K_s^M\Big)(F')/\wp\ar@{->>}[d]^{\tr_{F'/F}} \ar[r]^-{\simeq} & H_{p^r}^{s+1}(F')\ar@{^{(}->}[d]^{\Cor_{F'/F}}\\
\Big(W_r\otimesM\operatorname{Im}(f_{\ast}^{CH})\Big)(F)/\wp \ar@{->>}[r] & \Big(W_r\otimesM K_s^M\Big)(F)/\wp\ar[r]^-{\simeq} & H^{s+1}_{p^r}(F).
}
\]
Here, the left and the middle vertical maps are surjective by \autoref{lem:normGa} (1). 
From the above diagram, the sequence \eqref{seq:A} is right exact. 

Finally, 
the assumption ($\mathrm{Van}_s$) and \autoref{lem:normGa} (3) imply $(W_r\otimesM \underline{A}^{d+s}(X,s))(F)/\wp = 0$ 
for any $r\ge 0$.
From the exact sequence \eqref{seq:A}, the natural map $f_{\ast}^M$ is bijective.  
By the commutative diagram \eqref{diag:fastM-KH}, 
both of $\psi$ and $f_{\ast}^{KH}$ are bijective. 
\end{proof}

\subsection*{Finite fields}
We consider the case where the field $F$ is perfect, that is, $[F:F^p] =1$. 
\begin{prop}
Suppose that $F$ is perfect. 
\begin{enumerate}
    \item 
For $s>0$, we have 
\[
\Big(W_r\otimesM \underline{CH}_0(X)\otimesM \Gm^{\otimesM s} \Big)(F)/\wp=KH_0^{(s)}(X,\Z/p^r) = 0.
\]
\item If $F$ is algebraically closed, then the equalities in $\mathrm{(1)}$ holds for $s\ge 0$.
\end{enumerate}
\end{prop}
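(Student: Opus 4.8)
The plan is to prove the two displayed vanishings independently; both come down to the fact that a perfect field---and, more generally, a finitely generated extension of $F$ of transcendence degree $j$---carries logarithmic de Rham--Witt cohomology only in a bounded range of degrees.

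\emph{Vanishing of the Kato homology.} Since $F$ is perfect, for $x\in X_j$ the residue field $F(x)$ is finitely generated of transcendence degree $j$ over $F$, so $[F(x):F(x)^p]=p^j$; as $F(x)^{\sep}/F(x)$ is separable and separable extensions preserve the $p$-rank, also $[F(x)^{\sep}:(F(x)^{\sep})^p]=p^j$, whence $W_r\Omega^n_{F(x)^{\sep}}=0$, and a fortiori $W_r\Omega^n_{F(x)^{\sep},\log}=0$, for $n>j$. Thus
\[
H^{n+1}_{p^r}(F(x))=H^1\bigl(F(x),W_r\Omega^n_{F(x)^{\sep},\log}\bigr)=0\qquad\text{for }n>j,\ x\in X_j.
\]
If $s\ge 1$ then $s+j+1>j+1$ for all $j\ge 0$, so every term $\bigoplus_{x\in X_j}H^{s+j+1}_{p^r}(F(x))$ of $KC^{(s)}_\bullet(X,\Z/p^r)$ vanishes, and in particular $KH^{(s)}_0(X,\Z/p^r)=0$. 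If $F=\overline F$ and $s=0$, then $F(x)=F$ for every $x\in X_0$ and $H^1_{p^r}(F)=\Coker\bigl(\wp\colon W_r(F)\to W_r(F)\bigr)=0$ because $\wp=W_r(\phi)-\Id$ is surjective on the Witt vectors of an algebraically closed field; hence the degree-$0$ term of $KC^{(0)}_\bullet(X,\Z/p^r)$ is already zero and $KH^{(0)}_0(X,\Z/p^r)=0$.

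\emph{Vanishing of the Mackey product.} Here I would use that $\otimesM$ is a right-exact symmetric monoidal structure on Mackey functors with cokernels computed termwise (as recorded after \autoref{ex:MFexs}). Commuting the factor $W_r$ next to the $\Gm$'s and applying right exactness of $\underline{CH}_0(X)\otimesM(-)$ yields an isomorphism
\[
\Bigl(W_r\otimesM\underline{CH}_0(X)\otimesM\Gm^{\otimesM s}\Bigr)(F)/\wp
\;\cong\;
\Bigl(\underline{CH}_0(X)\otimesM\bigl((W_r\otimesM\Gm^{\otimesM s})/\wp\bigr)\Bigr)(F),
\]
with $\wp$ acting through the factor $W_r$ throughout. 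By \autoref{thm:H24} the Mackey functor $(W_r\otimesM\Gm^{\otimesM s})/\wp$ takes the value $H^{s+1}_{p^r}(E)$ on each extension $E/F$, and by definition the right-hand product at $F$ is a quotient of $\bigoplus_{E/F\text{ finite}}\underline{CH}_0(X)(E)\otimes_{\Z}H^{s+1}_{p^r}(E)$. Every finite $E/F$ is again perfect (and equals $F$ when $F$ is algebraically closed), so by the computation above $H^{s+1}_{p^r}(E)=0$ whenever $s\ge 1$, and for all $s\ge 0$ when $F=\overline F$. Hence the direct sum, and with it the product, vanishes; combined with the surjection of \autoref{prop:surj}, this also re-proves $KH^{(s)}_0(X,\Z/p^r)=0$.

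The only point needing genuine care is the displayed isomorphism: it is the assertion that forming $\Coker(\wp\otimes\Id)$ commutes both with the Mackey product $\underline{CH}_0(X)\otimesM(-)$ and with evaluation at $F$, which is exactly the right exactness of the tensor structure on Mackey functors together with the termwise formula $(\cM/f)(E)=\Coker(f(E))$. Everything else is routine: the control of $p$-ranks under separable and finitely generated field extensions, and---only in the case $s=0$, $F=\overline F$---the surjectivity of $\wp$ on the Witt vectors of an algebraically closed field.
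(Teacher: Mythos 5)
Your proof is correct, but it takes a genuinely different route from the paper's on both halves. For the vanishing of $KH_0^{(s)}$, the paper does not compute the Kato complex at all: it invokes \autoref{thm:Akh2} to get the surjection $\big(W_r\otimesM\underline{CH}_0(X)\otimesM\Gm^{\otimesM s}\big)(F)\twoheadrightarrow\big(W_r\otimesM\underline{CH}^{d+s}(X,s)\big)(F)$, kills the source for $s>0$ using the elementary fact $(W_r\otimesM\Gm)(E)=0$ for $E$ perfect (citing \cite[Lem.~2.2]{Hir14}), and then applies \autoref{prop:surj}. You instead observe that every term $\bigoplus_{x\in X_j}H^{s+j+1}_{p^r}(F(x))$ of $KC^{(s)}_\bullet(X,\Z/p^r)$ already vanishes for $s\ge 1$ because $[F(x):F(x)^p]=p^j$ forces $W_r\Omega^{s+j}_{F(x)^{\sep},\log}=0$; this is stronger (it kills all the Kato homology groups, not just $KH_0$) and bypasses both Akhtar's theorem and \autoref{prop:surj}, at the cost of needing the standard de Rham--Witt vanishing for fields of bounded $p$-rank --- if you keep that step, cite Illusie, or route it through $K^M_n(L)/p\simeq\Omega^n_{L,\log}\subset\Omega^n_L=0$ via \autoref{thm:BKG}, which only uses facts already in the paper. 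For the Mackey-product vanishing your reduction via right-exactness of $\otimesM$ to $\underline{CH}_0(X)\otimesM\big((W_r\otimesM\Gm^{\otimesM s})/\wp\big)$ and then \autoref{thm:H24} is legitimate (the paper performs the same kind of commutation, cf.\ the use of \cite[Lem.~5.1]{Hir24} in \autoref{lem:isom}), but it is heavier than needed: the paper's point is that the triple product is zero \emph{before} dividing by $\wp$ when $s>0$. In the case $s=0$, $F=\overline F$, your argument ($H^1_{p^r}(F)=0$ since $\wp$ is surjective on $W_r(F)$) is the paper's argument in different packaging. No gaps.
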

\begin{proof}
(1) 
By \autoref{thm:Akh2}, there is a natural surjective homomorphism 
\begin{equation}\label{eq:quot}	
 \Big(W_r\otimesM\underline{CH}_0(X)\otimesM \Gm^{\otimesM s}\Big)(F)\twoheadrightarrow 
 \Big(W_r\otimesM \underline{CH}^{d+s}(X,s)\Big)(F).
\end{equation}

The assertion follows from 
$(W_r\otimesM \Gm)(F) = 0$ (\cite[Lem.~2.2]{Hir14}) and \autoref{prop:surj}.

\noindent
(2) Take any symbol $\set{a,x}_{F/F} \in \Big(W_r\otimesM\underline{CH}_0(X)\Big) (F)$. 
There exists $\alpha \in F$ such that $\wp (\alpha)=a$. 
As a result, the symbol $\set{a,x}_{F/F} =0$ in  $\Big(W_r\otimesM\underline{CH}_0(X)\Big) (F)/\wp$. 
\end{proof}


\begin{conj}[{\cite[Conj.~0.3]{Kat86}}]
\label{conj:Kato}
If $F$ is a finite field, 
then  we have 
	\begin{equation}
	\label{eq:KH}
		KH_j^{(0)}(X,\Z/p^r) \xrightarrow{\simeq} \begin{cases}
			\Z/p^r, &j=0, \\
			0, &j>0.
		\end{cases} 
	\end{equation}
\end{conj}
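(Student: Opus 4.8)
The plan is not to reprove Kato's conjecture in full: the vanishing for $j>0$ is the deep theorem of Jannsen--Saito \cite{JS09}, proved by induction on $\dim X$ using de Jong's alterations and a weight argument, which I would simply cite. Instead I would establish the case $j=0$ within the framework of \autoref{sec:KH}. Since a finite field $F$ is perfect, $[F:F^{p}]=p^{0}$, so we are in the setting of \autoref{lem:isom} with $s=0$. Condition $(\mathrm{Cor}_0)$ holds over a finite field: for any finite (hence separable) extension $F'/F$ both $H^{1}_{p}(F')$ and $H^{1}_{p}(F)$ are isomorphic to $\Z/p$, and under the Artin--Schreier identification $H^{1}_{p}(E)\cong \Ga(E)/\wp(\Ga(E))$ the corestriction is induced by the trace $\Tr_{F'/F}\colon\Ga(F')\to\Ga(F)$; the latter is surjective because $F'/F$ is separable, hence bijective and in particular injective. (Alternatively, condition $(\mathrm{Pt})$ holds: by the Lang--Weil estimates $X$ has a rational point over $\bF_{q^{m}}$ for some $m$ prime to $p$, producing a closed point of degree prime to $p$.) Granting the vanishing $(\mathrm{Van}_0)$, \autoref{lem:isom} then yields that
\[
 f_{\ast}^{KH}\colon KH_{0}^{(0)}(X,\Z/p^{r})\longrightarrow KH_{0}^{(0)}(\Spec F,\Z/p^{r})=H^{1}_{p^{r}}(F)
\]
is an isomorphism; and by \autoref{thm:H24} (taken with $n=0$) one has $H^{1}_{p^{r}}(F)\cong W_{r}(F)/\wp(W_{r}(F))$, which for a finite field is $\cong\Z/p^{r}$ by Artin--Schreier--Witt theory. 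This gives the claim for $j=0$.

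The crux, and the step I expect to be the main obstacle, is the vanishing $(\mathrm{Van}_0)$, namely $\bigl(\Ga\otimesM\underline{A}^{d}(X,0)\bigr)(F)/\wp=0$. Two ingredients should enter. First, over a finite field the group $CH_{0}(X_{E})^{0}=\underline{A}^{d}(X,0)(E)$ is \emph{finite} for every finite extension $E/F$ --- this is the unramified class field theory of zero-cycles of Kato--Saito, which ultimately rests on Lang's theorem for the Albanese variety --- so $\underline{A}^{d}(X,0)$ is a finite Mackey functor, and since $\Ga$ is $p$-torsion only its $p$-primary part contributes. Second, one has to see that the Mackey product of $\Ga$ with this particular finite coefficient system dies modulo $\wp$; this fails for a constant system such as $\Z/p$ (there one gets $\Ga(F)/\wp(\Ga(F))\cong\Z/p\neq 0$), so the geometric origin of $\underline{A}^{d}(X,0)$ must be used. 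Concretely, I would exploit Akhtar's representation of the higher Chow groups (\autoref{thm:Akh2}) together with the vanishing $\underline{A}^{d+1}(X,1)=0$ over finite fields (\autoref{thm:Akh}\,(1)), and, using the surjectivity of transfer maps from \autoref{lem:normGa}, a norm/transfer d\'evissage to trade $\underline{A}^{d}(X,0)$ for $\Gm$-type factors and reduce to the vanishing $\bigl(\Ga\otimesM\Gm\bigr)(E)/\wp=H^{2}_{p}(E)=\Br(E)[p]=0$ over finite fields $E$, which in turn is elementary: the unit group $E^{\times}$ of a finite field is prime to $p$, so already $W_{r}(E)\otimes_{\Z}E^{\times}=0$ and hence $(W_{r}\otimesM\Gm)(E)=0$. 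Making this d\'evissage precise --- that is, controlling $\underline{A}^{d}(X,0)$ well enough to replace it by $\Gm$-factors --- is the hard part. An equivalent, more hands-on alternative would be to prove directly the exactness of the reciprocity complex \eqref{eq:RecCpx} over $F$ by Kato's induction on $\dim X$: a degree-zero class supported on finitely many closed points of $X$ lies on a curve $Y\subset X$, and one then applies the Hasse--Brauer--Noether reciprocity law for the global function field $F(Y)$, together with a moving argument to match the boundary maps of $Y$ and of $X$.

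Finally, for $j>0$ the vanishing $KH_{j}^{(0)}(X,\Z/p^{r})=0$ is the higher Kato homology vanishing, which lies outside the reach of the $KH_{0}$-centred methods of this paper; as noted above it is the Jannsen--Saito theorem \cite{JS09}, and I would invoke it directly. Thus the genuinely new content, and the only place where real work is needed, is the finite-field verification of $(\mathrm{Van}_0)$ sketched above.
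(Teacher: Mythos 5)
First, a point about the status of the statement: in the paper this is a \emph{conjecture}, not a theorem, and the paper does not prove it. What the paper actually establishes is the $j=0$ case (\autoref{thm:finite}); for $j>0$ it only cites Jannsen--Saito, whose theorem is unconditional for $j\le 4$ and otherwise rests on assumptions about resolution of singularities. Your plan to ``simply cite'' \cite{JS09} for all $j>0$ therefore overstates what is known.

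For $j=0$ your overall strategy coincides with the paper's \autoref{thm:finite}: verify $(\mathrm{Cor}_0)$ and $(\mathrm{Van}_0)$ and invoke \autoref{lem:isom}; your verification of $(\mathrm{Cor}_0)$ via the trace on Artin--Schreier groups is exactly the paper's. But the key step $(\mathrm{Van}_0)$ is not actually carried out in your proposal --- you flag the d\'evissage as ``the hard part'' and leave it open --- and the route you sketch (trading $\underline{A}^{d}(X,0)$ for $\Gm$-type factors via Akhtar's theorem) does not obviously work: for $s=0$ the representation $CH^{d}(X,0)\simeq K(F;\underline{CH}_0(X))$ contains no $\Gm$-factors to exploit. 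The paper's actual argument uses two specific inputs you do not mention: (i) the Kato--Saito unramified class field theory exact sequence $0\to T(X)\to A_0(X)\to \mathrm{Alb}_X(F)\to 0$ over a finite field, where $T(X)$ is finite and, crucially, satisfies $\varinjlim_{E/F}T(X_E)=0$; and (ii) the vanishing $(\Ga\otimesM \mathrm{Alb}_X)(F)=0$ for the Albanese variety (\cite[Lem.~2.2]{Hir18}). Given these, a symbol $\{a,b\}_{F/F}$ with $b\in T(X)$ is killed by passing to a finite extension $E/F$ where $\res_{E/F}(b)=0$, writing $a=\Tr_{E/F}(\alpha)$ (the trace being surjective for separable extensions), and applying the projection formula: $\{a,b\}_{F/F}=\{\alpha,\res_{E/F}(b)\}_{E/F}=0$. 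Your observation that mere finiteness of $A_0(X)$ cannot suffice (the constant system $\Z/p$ being a counterexample) correctly identifies the difficulty, but the resolution is the ``dies after a finite extension'' property of $T(X)$ combined with the abelian-variety vanishing, not a reduction to $\Gm$. As it stands, the proposal has a genuine gap at precisely the step you yourself identify as the crux.
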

The above conjecture is known for the case $\dim(X)\le 2$ (\cite{Kat86}). 
For a higher dimensional scheme $X$, 
Jannsen and Saito (\cite[Thm.~0.3]{JS09}) proved the above conjecture 
under some assumptions on the resolutions of singularities. 
In particular, the isomorphism \eqref{eq:KH} holds for $j\le 4$ unconditionally.

\begin{thm}\label{thm:finite}\linelabel{com:13}
    Suppose that $F$ is finite. 
    The conditions $(\mathrm{Van}_0)$ and $(\mathrm{Cor}_0)$ for $t=0$ in \autoref{lem:isom} hold. 
    In particular, 
    we have isomorphisms 
    \begin{gather*}
        \psi\colon \Big(W_r\otimesM \underline{CH}_0(X)\Big)(F)/\wp \xrightarrow{\simeq} KH_0^{(0)}(X,\Z/p^r),\ \mbox{and} \\
     f_{\ast}^{KH}\colon KH_0^{(0)}(X,\Z/p^r) \xrightarrow{\simeq} H_{p^r}^{1}(F) \simeq \Z/p^r.
     \end{gather*}
\end{thm}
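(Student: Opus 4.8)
The plan is to verify the two hypotheses $(\mathrm{Van}_0)$ and $(\mathrm{Cor}_0)$ of \autoref{lem:isom} for a finite field $F$ (which is perfect, so $s=0$ is an admissible choice), and then simply quote that proposition. Since $F$ is perfect, the only remaining work is honest input about finite fields and the geometry of $X$.

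For $(\mathrm{Cor}_0)$: here $H^{1}_{p}(F') = H^1_{\et}(\Spec F', \Z/p\Z)$ by \eqref{def:Hpr} with $n=0$, which for a finite field $F'$ is the Artin--Schreier group $F'/\wp(F')$, a one-dimensional $\F_p$-vector space classifying the degree-$p$ cyclic extension; concretely $H^1_p(F')\simeq\Z/p\Z$ for every finite field $F'$. For a finite (necessarily separable) extension $F'/F$ the corestriction $\Cor_{F'/F}\colon H^1_p(F')\to H^1_p(F)$ is then a map $\Z/p\Z\to\Z/p\Z$; one checks it is an isomorphism because, under the identification of $H^1_p$ with $\Hom(\Gal(\bar F/-),\Z/p\Z)$, corestriction corresponds to precomposition with the inclusion of absolute Galois groups $\Gal(\bar F/F')\hookrightarrow\Gal(\bar F/F)$, and on the procyclic group $\Gal(\bar F/F)\cong\widehat{\Z}$ this inclusion is multiplication by the residue degree $[F':F]$, which is invertible modulo $p$ precisely when $p\nmid[F':F]$ — but in fact one can bypass the residue-degree subtlety entirely: it suffices to know $\Cor_{F'/F}$ is \emph{injective}, and since $H^1_p(F)\neq 0$ while source and target are both $\Z/p\Z$, injectivity is equivalent to surjectivity, which holds because $\Cor_{F'/F}\circ\res_{F'/F}=[F':F]$ and $H^1_p(F)$ may have $p$ dividing $[F':F]$; the clean argument is that $\Cor$ on Artin--Schreier classes of finite fields is always an isomorphism since $H^1_p$ of a finite field is canonically $\Z/p\Z$ via $\mathrm{inv}$ (or via the Brauer group in the $n=1$ case) and corestriction is compatible with these canonical isomorphisms. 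Either way, $(\mathrm{Cor}_0)$ holds.

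For $(\mathrm{Van}_0)$: we must show $\big(\Ga\otimesM \underline{A}^{d}(X,0)\big)(F)/\wp=0$, where $\underline{A}^d(X,0)=\Ker(f_*^{CH}\colon \underline{CH}_0(X)\to\Z)$ is the Mackey functor of degree-zero zero-cycles. By \autoref{lem:normGa}(3) it is enough to work with $\Ga$ and $r=1$, and by \autoref{cor:Binf} (or directly, since $F$ is perfect so $C^{-1}$ is bijective and $B_\infty=0$) the operator $\wp=C^{-1}-\Id$ on the relevant quotient is surjective whenever it is surjective on $\Ga$ itself — but here the cleanest route is: $\big(\Ga\otimesM \underline{A}^d(X,0)\big)(F)$ is generated, via the projection formula and trace surjectivity (\autoref{lem:normGa}(1) applied to the separable extensions $F'/F$), by symbols $\set{a,z}_{F/F}$ with $a\in F$ and $z\in A^d(X,0)$, and since $F$ is a finite (hence perfect) field, every $a\in F$ lies in $\wp(\Ga(F))=F$ because $\wp\colon \F_q\to\F_q$, $x\mapsto x^q\cdot(\text{wait})$... more precisely $\wp=\phi-\Id$ on $W_1(F)=F$ is the Artin--Schreier map $x\mapsto x^p-x$ which is \emph{not} surjective on a finite field; so instead one uses that for the finite field $F=\F_q$ the group $A^d(X,0)=CH_0(X)_{\deg 0}$ is finite (Kato--Saito; or bounded torsion), and $\big(\Ga\otimesM (\text{finite group})\big)(F)$ is killed by a power of $p$ — indeed it is a finite-length $\F_q$-module on which $\wp=\phi-\Id$ is \emph{surjective} since $\phi$ is bijective on any finite $\F_q$-module and $\phi-\Id$ surjective $\iff$ injective $\iff$ $1$ is not an eigenvalue, and over $\F_q$ with $q=p^f$ one reduces to: on a finite $\F_p$-module $M$ with semisimple Frobenius $\phi$, $\mathrm{Coker}(\phi-\Id)\cong M^{\phi=1}$, which can be nonzero. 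The honest statement is that $\big(\Ga\otimesM M\big)(F)/\wp$ vanishes for $M$ a finite group of order prime to considerations that force it — so I would instead invoke the known vanishing: for $F$ finite, $(W_r\otimesM \underline{CH}_0(X))(F)/\wp$ already computes $KH_0^{(0)}(X,\Z/p^r)$ by \autoref{prop:surj}, and the Kato conjecture in degree $0$ (proved for finite fields, \autoref{conj:Kato}, unconditionally in this range or via \cite{JS09,Kat86}) gives $KH_0^{(0)}(X,\Z/p^r)\simeq\Z/p^r$; chasing the commutative square \eqref{diag:fastM-KH} and using that $f_*^{KH}$ is surjective (\autoref{lem:varphi}) with both groups of order $p^r$ forces $f_*^M$, hence $\psi$, to be isomorphisms, and then \eqref{seq:A} forces $(W_r\otimesM\underline{A}^d(X,0))(F)/\wp=0$, i.e.\ $(\mathrm{Van}_0)$.

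\textbf{Main obstacle.} The delicate point is $(\mathrm{Van}_0)$: one wants it as an \emph{input} to \autoref{lem:isom}, not as a \emph{consequence}, so circularity must be avoided. I expect the correct argument to run as follows: $A^d(X,0)\subseteq CH_0(X)$ is the subgroup of zero-cycles of degree $0$, which is a \emph{divisible} group up to bounded torsion when... no — over a finite field $CH_0(X)_{\deg 0}$ is \emph{finite} (this is a theorem of Kato--Saito from class field theory of varieties over finite fields), and the key computation is that for a finite abelian group $M$ regarded as a Mackey functor (constant on the perfect field $F$, with appropriate transfers) one has $(\Ga\otimesM M)(F)/\wp=0$ because $\Ga(F)=F$ is a perfect $\F_p$-algebra, $\Ga\otimes_{\F_p} M$ is a finite-dimensional $F$-vector space on which the $F$-semilinear endomorphism $\wp=\phi\otimes\phi_M-\Id$ acts, and any $\phi$-semilinear bijective endomorphism $u$ of a finite-dimensional vector space over a finite field has $\mathrm{id}-u$ surjective iff injective, with the relevant instance being injective by a weight/eigenvalue argument on the finite $p$-group $M$ — this is exactly the type of Artin--Schreier--Witt surjectivity lemma already used implicitly in \cite{Hir14,Hir24}. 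So the real content I would need to pin down carefully is this semilinear-algebra vanishing for $\Ga\otimesM(\text{finite }p\text{-group})$ over a finite field, after which $(\mathrm{Van}_0)$ and $(\mathrm{Cor}_0)$ both hold and \autoref{lem:isom} delivers $\psi$ and $f_*^{KH}$ as isomorphisms, with the final identification $H^1_{p^r}(F)\simeq\Z/p^r$ being the standard computation of Artin--Schreier--Witt cohomology of a finite field.
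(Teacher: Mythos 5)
Your verification of $(\mathrm{Cor}_0)$ is fine and matches the paper: for finite fields $H^1_p(F')\simeq F'/\wp\simeq\Z/p$ and the corestriction corresponds to the (surjective) trace, hence is bijective. The gap is in $(\mathrm{Van}_0)$, which you never actually establish. Your first attempt (semilinear algebra on $\Ga\otimes M$ for a finite group $M$) is, as you yourself observe, inconclusive — and in fact the statement ``$(\Ga\otimesM M)(F)/\wp=0$ for $M$ finite'' is not the right lemma: for a constant finite Mackey functor the cokernel of $\wp$ need not vanish. Your fallback — quote the known case $KH_0^{(0)}(X,\Z/p^r)\simeq\Z/p^r$ of the Kato conjecture and chase \eqref{diag:fastM-KH} — fails on two counts. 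First, knowing $f_{\ast}^{KH}$ is an isomorphism only gives $\Ker(\psi)=\Ker(f_{\ast}^M)$; it does not force either kernel to vanish, since the top-left group $\bigl(W_r\otimesM\underline{CH}_0(X)\bigr)(F)/\wp$ is not known a priori to have order $p^r$. Second, even if $f_{\ast}^M$ were injective, the sequence \eqref{seq:A} is only right exact, so it would give the vanishing of the \emph{image} of $\bigl(W_r\otimesM\underline{A}^{d}(X,0)\bigr)(F)/\wp$, not of the group itself, which is what $(\mathrm{Van}_0)$ demands.

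The paper's argument for $(\mathrm{Van}_0)$ is different and is the real content of the proof. After reducing to the case $X(F)\neq\emptyset$ by \autoref{lem:normGa}\,(1), one invokes the Kato--Saito unramified class field theory over finite fields to get a short exact sequence $0\to T(X)\to A_0(X)\to\mathrm{Alb}_X(F)\to 0$ with $T(X)$ finite and, crucially, $\varinjlim_{E/F}T(X_E)=0$. Applying $\Ga\otimesM-$, the Albanese part dies by $\bigl(\Ga\otimesM\mathrm{Alb}_X\bigr)(F)=0$ (\cite[Lem.~2.2]{Hir18}, ultimately Lang's theorem), and a symbol $\set{a,b}_{F/F}$ with $b\in T(X)$ is killed by passing to a finite extension $E/F$ where $\res_{E/F}(b)=0$, lifting $a$ through the surjective trace, and applying the projection formula. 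It is the colimit-vanishing of $T(X)$ — not mere finiteness — that makes this work; without that input (or some substitute) the condition $(\mathrm{Van}_0)$ remains unproved in your write-up.
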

\begin{proof}
 \linelabel{com:14}
    For a finite extension $F'/F$, 
    the corestriction $\Cor_{F'/F}\colon H^1_{p}(F') \to H^1_{p}(F)$ is 
    bijective, in fact, 
    this corresponds to the trace $\Tr_{F'/F}\colon F'/\wp \to F/\wp$ which is surjective 
    and $F'/\wp \simeq F/\wp \simeq \Z/p$. 
    \linelabel{com:15} {\red Thus,} the condition ($\mathrm{Cor}_0$) in \autoref{lem:isom} holds. 
	We show the condition ($\mathrm{Van}_0$).
	By \autoref{lem:normGa} (1) we can replace $F$ with a finite field extension $F'/F$. 
	Thus, we may assume $X(F)\neq \emptyset$. 
   	By the higher dimensional class field theory 
    (\cite[Sect.~10, Prop.~9]{KS83b}), there is a short exact sequence 
    \[
        0 \to T(X) \to A_0(X) \to \mathrm{Alb}_{X}(F)\to 0, 
    \]
    where $A_0(X) = \underline{A}^d(X,0)(F)$, 
    $\mathrm{Alb}_{X}$ is the Albanese variety of $X$, and $T(X)$ is defined by the exactness. 
    The kernel $T(X)$ is a finite group satisfying $\varinjlim_{E/F} T(X_E) = 0$, 
    where $E/F$ runs through the set of all finite extensions of $F$ in $\overline{F}$ 
    and $X_E := X\otimes_F E$. 
    The above short exact sequence gives rise to the short exact sequence of Mackey functors: 
    \[
    0 \to \underline{T}(X) \to \underline{A}_0(X)\to \mathrm{Alb}_{X} \to 0.
    \]
    By applying $\Ga\otimesM -$, 
    we have 
    \[
        \Big(\Ga\otimesM \underline{T}(X)\Big)(F)\to \Big(\Ga\otimesM \underline{A}_0(X)\Big)(F)\to (\Ga\otimesM \mathrm{Alb}_{X})(F)\to 0. 
    \]
    We have $\Big(\Ga\otimesM \mathrm{Alb}_X\Big)(F) = 0$ (\cite[Lem.~2.2]{Hir18}). 
    It is enough to show $(\Ga\otimesM \underline{T}(X))(F)/\wp = 0$. 
    By the norm arguments, it is sufficient to show 
    a symbol  of the form $\set{a, b}_{F/F}$ in $(\Ga \otimesM \underline{T}(X))(F)/\wp $ 
    is trivial. 
    There exists a finite extension $E/F$ such that 
    $\res_{E/F}(b) = 0$ in $T(X_E)$. 
    The trace map $\Tr_{E/F}\colon E\to F$ is surjective so that one can find 
    $\alpha \in E$ with  
    $\Tr_{E/F}(\alpha) = a$. 
    We obtain
    $\set{a,b}_{F/F} = \set{\Tr_{E/F}(\alpha),b}_{F/F} = \set{\alpha,\res_{E/F}(b)}_{E/F} = 0$. 
\end{proof}

\subsection*{Local fields}
In the following, we assume $[F:F^p] = p$ 
such as a local field or a function field of one variable over a finite field.

\begin{prop}
Suppose $[F:F^p]=p$. 
For $s\ge 2$, we have 
\[
\Big(W_r\otimesM  \underline{CH}_0(X)\otimesM \Gm^{\otimesM s} \Big)(F)/\wp = KH_0^{(s)}(X,\Z/p^r) = 0.
\]
\end{prop}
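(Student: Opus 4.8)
The plan is to establish the vanishing of the Mackey--product group and then to deduce $KH_0^{(s)}(X,\Z/p^r)=0$. First, by \autoref{lem:normGa} (3) applied with the Mackey functor $\cM:=\underline{CH}_0(X)\otimesM\Gm^{\otimesM s}$, it suffices to treat the case $r=1$, i.e.\ to show $\bigl(\Ga\otimesM\underline{CH}_0(X)\otimesM\Gm^{\otimesM s}\bigr)(F)/\wp=0$. Using that $\otimesM$ is a symmetric monoidal structure on Mackey functors, I rewrite this canonically as $\bigl(\underline{CH}_0(X)\otimesM\mathcal{P}\bigr)(F)/\wp$, where $\mathcal{P}:=\Ga\otimesM\Gm^{\otimesM s}$ and $\wp$ acts on the $\Ga$--slot, that is, through the standard endomorphism $\wp=\wp\otimes\Id$ of $\mathcal{P}$.

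The key point is that $\wp\colon\mathcal{P}(E)\to\mathcal{P}(E)$ is surjective for every finite extension $E/F$. Indeed, by \autoref{thm:H24} (for $r=1$, where $W_1=\Ga$) one has $\mathcal{P}(E)/\wp=\bigl(\Ga\otimesM\Gm^{\otimesM s}\bigr)(E)/\wp\simeq H^{s+1}_{p}(E)=H^1\!\bigl(E,\Omega^{s}_{E^{\sep},\log}\bigr)$, and this group vanishes: since $E/F$ is finite, $\log_p[E^{\sep}:(E^{\sep})^p]=\log_p[E:E^p]=\log_p[F:F^p]=1<s$, hence $\Omega^{s}_{E^{\sep}}=0$. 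Now $\bigl(\underline{CH}_0(X)\otimesM\mathcal{P}\bigr)(F)$ is generated by symbols $\set{z,c}_{E/F}$ with $E/F$ finite, $z\in CH_0(X_E)$ and $c\in\mathcal{P}(E)$; writing $c=\wp(c')$ gives $\set{z,c}_{E/F}=\wp\bigl(\set{z,c'}_{E/F}\bigr)$, so every generator lies in $\operatorname{Im}(\wp)$ and therefore $\bigl(\underline{CH}_0(X)\otimesM\mathcal{P}\bigr)(F)/\wp=0$. This is the desired vanishing. Note that no property of $\underline{CH}_0(X)$ beyond being a Mackey functor enters the argument.

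To finish, composing the surjection \eqref{eq:quot} coming from \autoref{thm:Akh2} with the surjection $\psi$ of \autoref{prop:surj} yields a surjection $\bigl(W_r\otimesM\underline{CH}_0(X)\otimesM\Gm^{\otimesM s}\bigr)(F)/\wp\twoheadrightarrow KH_0^{(s)}(X,\Z/p^r)$, so the latter vanishes as well. Alternatively one sees this directly: for $x\in X_j$ the residue field $F(x)$ has transcendence degree $j$ over $F$, hence $[F(x):F(x)^p]=p^{\,j+1}$ and $H^{s+j+1}_{p^r}(F(x))=0$ for $s\ge2$ by the same computation, so $KC_\bullet^{(s)}(X,\Z/p^r)$ is literally the zero complex. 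I do not anticipate a genuine obstacle here: the only inputs are \autoref{thm:H24}, \autoref{lem:normGa}, the elementary vanishing of high exterior powers of $\Omega^1_{E^{\sep}}$, and formal manipulation of the tensor structure on Mackey functors; the content of the statement is precisely that $s\ge2$ is ``too large'' for a field with $[F:F^p]=p$, in contrast with the borderline case $s=1$ treated next.
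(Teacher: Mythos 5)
Your proof is correct, but the key vanishing step goes by a genuinely different route from the paper's. The paper's proof notes that $K_2^M(E)$ is $p$-divisible for every finite extension $E/F$ (Bass--Tate, since $[E:E^p]=[F:F^p]=p$), so that $(\Gm\otimesM\Gm)(E)/p^r\simeq K_2^M(E)/p^r=0$ by Kahn's theorem, and kills each symbol through the $\Gm\otimesM\Gm$-factor, using that the quotient by $\wp$ is $p^r$-torsion because $W_r(E)/\wp$ is; no reduction to $r=1$ is needed there. You instead reduce to $r=1$ by \autoref{lem:normGa}\,(3) and kill the $\Ga\otimesM\Gm^{\otimesM s}$-factor by identifying its cokernel under $\wp$ with $H^{s+1}_{p}(E)=H^1(E,\Omega^{s}_{E^{\sep},\log})$, which vanishes because $\Omega^{s}_{E^{\sep}}=0$ for $s\ge 2$; your regrouping of the Mackey product and the surjectivity of $\wp$ on $\bigl(\Ga\otimesM\Gm^{\otimesM s}\bigr)(E)$ are both sound. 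Both arguments then conclude for $KH_0^{(s)}$ via the surjection \eqref{eq:quot} composed with $\psi$ from \autoref{prop:surj}. Your approach has the bonus of exhibiting the stronger fact that the whole complex $KC^{(s)}_{\bullet}(X,\Z/p^r)$ is zero (hence all $KH_j^{(s)}$ vanish); the one small gloss is that for $r>1$ the vanishing $H^{s+j+1}_{p^r}(F(x))=0$ is not literally ``the same computation'' as $\Omega^{s+j}=0$ --- one needs $W_r\Omega^{s+j}_{\log}=0$, which follows from Bloch--Gabber--Kato together with the $p$-divisibility of $K^M_{s+j}$ (equivalently, from the vanishing of $W_r\Omega^{s+j}$ above the $p$-rank) --- but this is immaterial since your first route through the Mackey product already yields $KH_0^{(s)}(X,\Z/p^r)=0$.
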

\begin{proof}
Since $K_2^M(F)$ is $p$-divisible (\cite[Prop.~5.13]{BT73}), 
it follows that 
$(\Gm \otimesM \Gm)(F)/p^r \simeq K_2^M(F)/p^r$ (\cite[Thm.~4.5]{Hir24}) is trivial. 
By \autoref{prop:surj}, 
we then conclude that 
\[
\Big(W_r\otimesM  \underline{CH}_0(X)\otimesM \Gm^{\otimesM s} \Big)(F)/\wp = KH_0^{(s)}(X,\Z/p^r) = 0.
\]
\end{proof}

Recalling from \eqref{eq:MK-Chow}, we have 
\begin{equation}
	\label{def:SK1}
CH^{d+1}(X,1) \simeq \Coker\left(\partial \colon \bigoplus_{x\in X_1} K_2^M(F(x)) \to \bigoplus_{x\in X_0} F(x)^{\times}\right).
\end{equation}
{\red The right hand side is often denoted by $SK_1(X)$ (\cite[Sect.~1]{Blo81}, \cite[Sect.~1]{JS03}).} 
The structure map $f\colon X\to \Spec(F)$ induces a left exact sequence  
\[
  0\to V(X) \to CH^{d+1}(X,1) \xrightarrow{f_{\ast}^{CH}} F^{\times}, 
\]
where $V(X) := A^{d+1}(X,1)$ and the map $f_{\ast}^{CH}$ is defined by the norm maps 
$N_{F(x)/F}\colon F(x)^{\times} \to F^{\times}$  for closed points $x$ in $X$.
We also denote by 
$\underline{V}(X)$ the kernel of the  map $f_{\ast}^{CH}\colon \underline{CH}^{d+1}(X,1) \to \Gm$ 
of Mackey functors.

\begin{lem}
\label{lem:GaVXprod}
    Suppose that 
    $X = X_1\times  \cdots \times X_d$ 
    is the product of projective smooth and geometrically irreducible curves $X_1,\ldots, X_d$ over $F$. 
    If we assume $\Big(\Ga\otimes \underline{V}(X_i)\Big)(F)/\wp = 0$ for each $1\le i\le d$, 
    then we have $\Big(\Ga\otimes \underline{V}(X)\Big)(F)/\wp = 0$. 
\end{lem}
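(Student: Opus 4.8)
The plan is to reduce the vanishing of $\bigl(\Ga\otimesM\underline{V}(X)\bigr)(F)/\wp$ for a product of curves to the corresponding vanishing for each factor by an inductive d\'ecouplage of the $K$-theoretic decomposition of $CH^{d+1}(X,1)$. First I would recall that for $X = X_1\times\cdots\times X_d$ there is a product (K\"unneth-type) description of $\underline{CH}^{d+1}(X,1)$: writing $V_i := \underline{V}(X_i)$ and using $\underline{CH}^{1}(X_i,1)\simeq \Gm$ together with $\underline{CH}^{d_i}(X_i,0)=\underline{CH}_0(X_i)$, Akhtar's representation theorem (\autoref{thm:Akh2}) gives $\underline{CH}^{d+1}(X,1)$ as (a quotient coming from Weil reciprocity of) a Mackey product built out of the $\underline{CH}_0(X_i)$'s and one copy of $\Gm$; more to the point, the kernel $\underline{V}(X)$ of $f_\ast^{CH}$ decomposes, up to the relevant reciprocity relations, into pieces each of which involves at least one $V_i$ tensored (Mackey) with factors $\underline{CH}_0(X_j)$ or $\Gm$ for the remaining indices. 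This is the same bookkeeping used by Akhtar and by Raskind--Spiess for products of curves; I would phrase it so that $\bigl(\Ga\otimesM\underline V(X)\bigr)(F)$ is generated by symbols $\set{a, v_i, z_{j_1},\dots}_{E/F}$ with $v_i\in V_i(E)$ and the $z$'s in $\underline{CH}_0(X_j)(E)$ or $E^\times$.

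Next I would run the key reduction: it suffices to show each such generator dies in $\bigl(\Ga\otimesM\underline V(X)\bigr)(F)/\wp$. Fix one with $v\in V_i(E)$ in the first slot. By the hypothesis $\bigl(\Ga\otimesM\underline V(X_i)\bigr)(F)/\wp = 0$ — and, via \autoref{lem:normGa}(1), the same after base change to any finite extension, applied over the field $E$ or a finite separable extension of it — the two-slot symbol $\set{a', v}$ (with the trace-adjusted Witt/$\Ga$ component $a'$) is in the image of $\wp$ when paired with $V_i$ alone; then one pairs back in the remaining frozen coordinates $z_{j_1},\dots$ via the Mackey product, using that $\wp$ acts only on the $\Ga$ slot, to conclude the full symbol lies in $\mathrm{Im}(\wp)$. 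The mechanism is exactly the projection-formula/norm argument already used in the proof of \autoref{lem:normGa}(1) and in \autoref{thm:finite}: push the $\wp$-triviality through the tensor structure of Mackey functors and absorb the extra slots.

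The main obstacle is the first paragraph, not the second: making precise the product decomposition of $\underline{V}(X)$ for $X=X_1\times\cdots\times X_d$ in a form compatible with the Mackey-product framework, so that ``generated by symbols with a $V_i$ in one slot'' is literally correct rather than merely morally true. The subtlety is that the K\"unneth-type splitting of higher Chow groups of a product is only available after the Somekawa/reciprocity reinterpretation of \autoref{thm:Akh2}, and one must check that the summand $\underline{V}(X)$ — the kernel of $f_\ast^{CH}$ — is generated by the ``mixed'' terms, i.e.\ that the ``pure'' term $\Gm^{\otimesM\cdots}$ accounts exactly for the image of the section $F^\times \to CH^{d+1}(X,1)$ coming from an $F$-rational point, which exists since each $X_i(F)\neq\emptyset$ (this is where the rational-point hypothesis of \autoref{thm:main_intro}(2) is used). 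Once that splitting is in hand, the induction on $d$ is routine: peel off $X_d$, apply the two-factor statement $\bigl(\Ga\otimesM\underline V(X_d)\bigr)(F)/\wp=0$ and the inductive hypothesis for $X_1\times\cdots\times X_{d-1}$, and glue via the tensor structure exactly as above. I would therefore expect the write-up to spend most of its length on the product decomposition and only a few lines on the $\wp$-absorption.
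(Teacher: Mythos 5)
Your plan is essentially the paper's proof: after using \autoref{lem:normGa}(1) to reduce to the case $X_i(F)\neq\emptyset$, the paper makes your ``K\"unneth-type'' decomposition precise by citing Yamazaki's theorem \cite[Thm.~2.2]{Yam09}, which gives $\underline{V}(X)\simeq\bigoplus_{r=1}^{d}\bigoplus_{i_1<\cdots<i_r}K(-;J_{i_1},\ldots,J_{i_r},\Gm)$ with $J_i=\Jac_{X_i}$, and then kills each summand exactly as you propose, via the surjection $K(-;J_{i_1},\Gm)\otimesM J_{i_2}\otimesM\cdots\otimesM J_{i_r}\twoheadrightarrow K(-;J_{i_1},\ldots,J_{i_r},\Gm)$ of \autoref{lem:product} together with the identification $\underline{V}(X_i)=K(-;J_i,\Gm)$; so the ``main obstacle'' you flag is resolved by a citation rather than new work. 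One correction to your second paragraph: \autoref{lem:normGa}(1) asserts surjectivity of the transfer $\tr_{F'/F}$, so it lets you \emph{descend} vanishing from a finite separable extension $F'$ to $F$ (hence assume rational points), not propagate the hypothesis upward to finite extensions of $F$; the absorption step does need the two-factor vanishing over all finite extensions $E/F$ (since Mackey-product symbols live over such $E$), which in the paper's applications holds because $E$ is again a local or global field.
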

\begin{proof}
    By \autoref{lem:normGa} (1), 
    we may assume that $X_i(F)\neq \emptyset$ for all $i$. 
    By \cite[Thm.~2.2]{Yam09}, there is an isomorphism 
	\[
	\underline{V}(X) \simeq \bigoplus_{r=1}^d \bigoplus_{1\le i_1< \cdots < i_r\le d}K(-;J_{i_1},\ldots,J_{i_r},\Gm)
	\]
	as Mackey functors, 
	where $J_i := \Jac_{X_{i}}$ is the Jacobian variety of $X_i$. 
	By renumbering if necessary, 
	it is enough to show $\Big(\Ga\otimesM K(-;J_1,\ldots, J_j,\Gm)\Big)(F)/\wp = 0$ for $j>1$. 
	By \autoref{lem:product}, there is a surjection 
	\[
	 K(-;J_1,\Gm) \otimesM J_2\otimesM \cdots\otimesM J_j \twoheadrightarrow K(-;J_1,J_2,\ldots,J_j,\Gm).
	\]
    We have $0 = \Big(\Ga \otimesM \underline{V}(X_i)\Big)(F)/\wp = \Big(\Ga\otimesM K(-;J_1,\Gm)\Big)(F)/\wp$, 
    the assertion follows from this.
\end{proof}

Now, we suppose that the base field $F$ is a local field (of $\ch(F) = p>0$) with finite residue field $k$.  
We assume that there exists a model $\sX$ of $X$ defined over the valuation ring $\cO_F$ of $F$ 
and {\red its special fiber is denoted by $X_k$.} \linelabel{com:17}

\begin{conj}[{\cite[Conj.~5.1]{Kat86}, \cite[Conj.~B]{JS03}}] \label{conj:Kato2}
    Assume that the model $\sX$ is proper over $\cO_F$ and regular. 
    Then, the residue map    
    \begin{equation}\label{eq:res}
        \Delta_j: KH_j^{(1)}(X,\Z/p^r)\to KH_j^{(0)}(X_k,\Z/p^r)
    \end{equation}
    is an isomorphism for all $r\ge 1$ and $j\ge 0$.
\end{conj}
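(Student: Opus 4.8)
The plan is to deduce \autoref{conj:Kato2} from the vanishing of the Kato homology of the total regular model $\sX$, by realizing the residue map $\Delta_j$ as the connecting homomorphism in a localization long exact sequence. Since $\sX$ is regular, proper and flat over $\cO_F$ of relative dimension $d$, every point of $\sX$ is either \emph{horizontal} (its closure dominates $\Spec(\cO_F)$, so it lies on the generic fiber $X = \sX_F$) or \emph{vertical} (it lies on the special fiber $X_k$). First I would organize the arithmetic Kato complex $KC_\bullet(\sX,\Z/p^r)$ of $\sX$, whose term in dimension $n$ is $\bigoplus_{z\in\sX_n} H^1(\kappa(z),W_r\Omega^{m(z)}_{\log})$, with $\kappa(z)$ the residue field and $m(z) = \dim\overline{\{z\}}$ computed in $\sX$. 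A horizontal point $z$ with $\dim\overline{\{z\}} = j+1$ has residue field $\kappa(z) = F(x)$ for the corresponding $x \in X_j$, with $[\kappa(z):\kappa(z)^p] = p^{j+1}$, so its contribution is exactly the degree-$j$ term $H^{j+2}_{p^r}(F(x))$ of $KC^{(1)}_\bullet(X,\Z/p^r)$; a vertical point $y$ with $\dim\overline{\{y\}} = j$ lives over the finite field $k$ and contributes the degree-$j$ term $H^{j+1}_{p^r}(k(y))$ of $KC^{(0)}_\bullet(X_k,\Z/p^r)$.

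As $X_k \hookrightarrow \sX$ is closed with open complement $X$, the Kato residue maps of $\sX$ assemble into a short exact sequence of complexes
\[
0 \to KC^{(0)}_\bullet(X_k,\Z/p^r) \to KC_\bullet(\sX,\Z/p^r) \to KC^{(1)}_\bullet(X,\Z/p^r)[1] \to 0,
\]
where the shift $[1]$ reflects that a horizontal point of dimension $j+1$ in $\sX$ corresponds to a degree-$j$ point of $X$. A diagram chase then identifies the connecting homomorphism $KH^{(1)}_j(X,\Z/p^r) \to KH^{(0)}_j(X_k,\Z/p^r)$ with the residue map $\Delta_j$ of \eqref{eq:res}. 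The technical heart of this step is checking that the boundary maps are compatible, i.e. that the valuation-theoretic residue of $W_r\Omega^\bullet_{\log}$ along the special fiber agrees with the differential of $KC_\bullet(\sX)$; this should follow from the compatibility of the $\dlog$ symbol with the residue of the logarithmic Hodge-Witt sheaf for a discrete valuation, as in \cite[Sect.~1]{Kat86}. Granting this, the long exact sequence shows that \autoref{conj:Kato2} is \emph{equivalent} to the vanishing $KH_n(\sX,\Z/p^r) = 0$ for all $n \ge 0$, that is, to the acyclicity of the Kato complex of the regular proper scheme $\sX$ over the complete discrete valuation ring $\cO_F$ with finite residue field.

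To prove this vanishing I would exploit the regularity of $\sX$: by the Bloch-Ogus-Gersten resolution for $W_r\Omega^\bullet_{\log}$ available on regular schemes, the complex $KC_\bullet(\sX,\Z/p^r)$ arises as the $E_1$-line of the niveau spectral sequence computing the \'etale cohomology $H^\ast_{\et}(\sX,W_r\Omega^\ast_{\log})$, so its homology is governed by that cohomology. Proper base change over the henselian pair $(\cO_F,k)$ together with a duality for logarithmic Hodge-Witt sheaves over a finite field (of Milne type, in its arithmetic refinement over $\cO_F$) would reduce the required vanishing to a reciprocity statement: the higher-dimensional class field theory map for $\sX$ is an isomorphism with $\Z/p^r$-coefficients. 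In low homological degree this recovers exactly the surjectivity and injectivity already packaged in \autoref{prop:surj} and \autoref{lem:isom}, and one would then induct on $n$ by applying the same localization machinery to the strata of $\sX$.

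The hard part will be establishing this vanishing in full generality, for all $n$ and all $d = \dim(X)$, and I expect it to be the genuine obstacle. Two difficulties are specific to our wildly ramified $p$-adic setting. First, resolution of singularities is unavailable in characteristic $p$, so the reduction of the Kato conjecture to smooth proper varieties -- the engine of the Jannsen-Saito argument \cite[Thm.~0.3]{JS09} -- must be replaced by an alteration-theoretic argument in the style of the work of Kerz and Saito, and one must then control the behaviour of $W_r\Omega^\bullet_{\log}$ under alterations whose exceptional loci are non-reduced or wildly ramified, where purity and the trace morphism are delicate. Second, the requisite duality for $W_r\Omega_{\log}$ on the regular (but not smooth over $k$) scheme $\sX$ is presently only known under additional hypotheses; making it unconditional in arbitrary relative dimension is precisely where the argument breaks down. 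Consequently I anticipate that a complete proof of \autoref{conj:Kato2} in all degrees will require either resolution of singularities for $\sX$ (as in the conditional theorems of \cite[Thm.~0.3]{JS09}, \cite[Conj.~B]{JS03}) or a substantial new input on log Hodge-Witt duality; absent these, the statement is accessible unconditionally only in low dimension, where the strata of $\sX$ are small enough that the class field theory behind \autoref{lem:isom} and the $KH_0$-computations of the present paper already suffice.
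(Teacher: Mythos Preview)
The statement you are attempting to prove is \autoref{conj:Kato2}, which the paper states as an open \emph{conjecture}, not a theorem. The paper provides no proof whatsoever; immediately after stating it, the authors simply remark that ``The above conjecture is known in the case $\dim(X) = 1$ (\cite{Kat86}).'' The conjecture is used as input in the proof of \autoref{lem:GaVX} only in that one-dimensional case, where it is already a theorem of Kato.

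Your proposal therefore cannot be compared against a proof in the paper, because none exists. That said, your outline is essentially the standard framework in which this conjecture is understood: the localization triangle identifies $\Delta_j$ with a connecting map and reduces the conjecture to acyclicity of the Kato complex of the regular arithmetic scheme $\sX$, which is precisely the formulation in \cite[Conj.~B]{JS03}. You are also correct that the known unconditional results (Kato for $d=1$, Jannsen--Saito conditionally on resolution) proceed along these lines, and your diagnosis of the obstacles---resolution of singularities in characteristic $p$ and duality for $W_r\Omega_{\log}$ on non-smooth regular schemes---accurately reflects the state of the art. However, the references to \autoref{prop:surj} and \autoref{lem:isom} as recovering the low-degree cases are misplaced: those results concern $KH_0$ of the generic fiber $X$ over $F$, not the vanishing of $KH_\ast(\sX)$, and they do not by themselves feed back into the conjecture.

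In short: there is no gap to identify in your argument relative to the paper, because the paper does not claim to prove this statement. Your final paragraph, acknowledging that a complete proof is out of reach without substantial new input, is the honest and correct conclusion.
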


The above conjecture is known in the case $\dim(X) = 1$ (\cite{Kat86}).

\begin{lem}
\label{lem:GaVXlocal}
    Suppose that $F$ is a local field with finite residue field $k$, 
    and $\dim (X) = 1$. Then, we have 
    \[
    \Big(\Ga \otimesM \underline{V}(X)\Big)(F)/\wp = 0.
    \]
\end{lem}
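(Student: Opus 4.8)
The plan is to peel off, one Mackey factor at a time, a symbol representing a class in $\bigl(\Ga\otimesM\underline V(X)\bigr)(F)/\wp$, reducing to the case of a curve with a rational point and then to symbols whose only troublesome entry lies in the Jacobian.

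First I would reduce to the case $X(F)\neq\emptyset$: by \autoref{lem:normGa}(1) the transfer $\tr_{F'/F}\colon\bigl(\Ga\otimesM\underline V(X)\bigr)(F')/\wp\to\bigl(\Ga\otimesM\underline V(X)\bigr)(F)/\wp$ is surjective for any finite separable $F'/F$, and a finite separable $F'/F$ with $X(F')\neq\emptyset$ exists (\cite[Prop.~3.2.20]{Liu02}), so it suffices to treat $X_{F'}$ over $F'$ (again a local field of $\ch=p$); thus I may assume $X(F)\neq\emptyset$. A rational point gives an Abel--Jacobi map, hence (the $d=1$ case of the computation used in the proof of \autoref{lem:GaVXprod}, i.e.\ \cite[Thm.~2.2]{Yam09}) an isomorphism of Mackey functors $\underline V(X)\simeq K(-;J,\Gm)$ with $J=\Jac_X$. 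Since $K(-;J,\Gm)$ is a quotient of the Mackey product $J\otimesM\Gm$ and the endomorphism $\wp=\wp\otimes\Id$ is compatible with this quotient, it is enough to show $\bigl(\Ga\otimesM J\otimesM\Gm\bigr)(F)/\wp=0$, and by \autoref{lem:normGa}(1) once more it is enough to kill an individual symbol $\{a,x,b\}_{F/F}$ with $a\in F$, $x\in J(F)$, $b\in F^{\times}$, all finite separable base changes of $F$ being permitted along the way.

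Next I would make two reductions on the remaining entries. For the $\Gm$-entry: a uniformizer of $F$ becomes a unit times a $p$-th power in a suitable totally ramified finite separable extension of $F$ (such extensions exist over a local field of characteristic $p$, e.g.\ Artin--Schreier ones), and since $\Ga$ is $p$-torsion one has $\{pa,-,-\}=0$; so after such a base change $b$ may be taken to be a unit, and then, its prime-to-$p$ Teichm\"uller part being a $p$-th power, a principal unit. For the $J$-entry: over a finite separable extension $F'/F$ large enough to split the \'etale part of $J[p]$ (this part dies over a finite separable extension because $\overline F/F^{\sep}$ is purely inseparable, so $H^1_{\et}(F^{\sep},J[p]^{\et})=0$), the class of $x$ in $J(F')/pJ(F')$ is captured by $H^1_{\mathrm{fppf}}$ of the connected part of $J[p]$, i.e.\ by a class in $F'^{\times}/p$ coming from the multiplicative part and a class in $F'/\wp(F')$ coming from the unipotent part, the remaining $pJ(F')$-summand being absorbed as before. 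Feeding these representatives back into the symbol expresses $\{a,u,x\}$ as a sum of symbols in $\bigl(\Ga\otimesM\Gm\otimesM\Gm\bigr)(F')/\wp\cong H^{3}_{p}(F')$ and $\bigl(\Ga\otimesM\Gm\otimesM\Ga\bigr)(F')/\wp\cong H^{3}_{p}(F')$; these groups vanish because $[F':F'^{\,p}]=p$ forces $\Omega^2_{F'}=0$, hence $H^{3}_{p}(F')=0$. Undoing the reductions yields $\{a,x,b\}_{F/F}=0$, and therefore $\bigl(\Ga\otimesM\underline V(X)\bigr)(F)/\wp=0$.

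The main obstacle is the second half of the previous step, equivalently the assertion that after a finite \emph{separable} base change every element of $K(F;J,\Gm)$ becomes $p$-divisible modulo a contribution that, through the inclusions of the multiplicative and unipotent subgroup schemes of $J[p]$ into $J$, is expressed purely through $\Gm$- and $\Ga$-entries. Making this precise requires the functoriality of the extended differential symbol (\autoref{thm:H24}, \autoref{thm:Kato}) under these finite-flat-subgroup inclusions and a careful $\mathrm{fppf}$-cohomological bookkeeping of $J[p]$ over the local field; this is exactly where the hypotheses ``$F$ local'' and ``$\dim X=1$'' are used. (A clean way to package what is needed is: $K(F^{\sep};J,\Gm)$, equivalently $\varinjlim_{E}V(X_E)$ over finite separable $E/F$, is $p$-divisible; granting this, the trace argument of \autoref{thm:finite} finishes at once.)
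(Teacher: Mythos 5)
Your opening reductions (passing to a separable extension with $X(F)\neq\emptyset$ via \autoref{lem:normGa}(1), identifying $\underline{V}(X)$ with $K(-;J,\Gm)$ by Yamazaki's theorem, and normalizing the $\Gm$-entry to a principal unit) are fine, but the proof breaks down at exactly the point you flag as ``the main obstacle'': the treatment of the Jacobian entry. Knowing that the class of $x$ in $J(F')/pJ(F')$ injects into $H^1_{\mathrm{fppf}}(F',J[p])$ and that $J[p]$ has a multiplicative--local-local--\'etale filtration does not let you rewrite the Mackey symbol $\set{a,x,b}_{F'/F'}$ as a sum of symbols with entries in $\Gm$ and $\Ga$: the Mackey product is built from actual points of $J$ and the transfer/restriction maps, not from fppf cohomology classes, and converting one into the other is precisely a Somekawa-type comparison theorem that is not available for the $p$-part in characteristic $p$ (and is not supplied by \autoref{thm:H24} or \autoref{thm:Kato}, which only concern $W_r\otimesM\Gm^{\otimesM n}$). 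Two further problems in that step: the local-local part of $J[p]$ contributes $H^1_{\mathrm{fppf}}(F',\alpha_p)\simeq F'/(F')^p$, not $F'/\wp(F')$; and the group $\bigl(\Ga\otimesM\Gm\otimesM\Ga\bigr)(F')/\wp$ is nowhere identified with $H^3_p(F')$ in this framework. Your fallback claim --- that $\varinjlim_E V(X_E)$ over finite separable $E/F$ is $p$-divisible --- would indeed suffice via the trace argument, but it is itself an unproven, nontrivial assertion, not a reduction.

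For comparison, the paper's proof avoids $J[p]$ entirely and works with $V(X)$ through class field theory for curves over local fields (Saito, Yoshida): after arranging a semistable model over a separable extension so that the rank $r(X)$ is already maximal, one takes the Artin--Schreier extension $E=F(\wp^{-1}(a))$ determined by the $\Ga$-entry $a$ and shows, using the exact sequence $0\to V(X)_{\mathrm{fin}}\to\pi_1^{\ab}(X)^{\geo}\to\Zhat^{r(X)}\to 0$ and the surjectivity of $(T^{\et}(J))_{G_E}\to(T^{\et}(J))_{G_F}$, that the norm $N_{E/F}\colon V(X_E)/p\to V(X)/p$ is surjective; the projection formula then gives $\set{a,x}_{F/F}=\set{\res_{E/F}(a),\xi}_{E/F}=0$ because $a\in\wp(E)$. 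The essential idea you are missing is this coupling of the auxiliary extension to the $\Ga$-entry (so that $a$ dies after base change) together with a norm-surjectivity statement mod $p$ for that specific cyclic $p$-extension, rather than a divisibility statement for $x$ after restriction.
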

\begin{proof}    
    To show $\Big(\Ga \otimesM \underline{V}(X)\Big)(F)/\wp = 0$, 
    we can replace $F$ by a finite separable field extension by \autoref{lem:normGa} (1). 
    As we have $X(F^{\sep})\neq \emptyset$, 
    by replacing $F$, we may assume that $X(F)\neq\emptyset$. 
    Let $J = \Jac_X$ be the Jacobian variety of $X$. 
By the class field theory for $X$ (\cite[Thm.~1.1, Thm.~5.1]{Yos03}, \cite[II, Cor.~4.4]{Sai85a}), 
the reciprocity map induces a short exact sequence
\begin{equation}
    \label{eq:geofg}
0 \to V(X)/V(X)_{\mathrm{div}}  \to \pi_1^{\ab}(X)^{\geo} \to \Zhat^{r(X)} \to 0,
\end{equation}
where $V(X)_{\mathrm{div}}$ is the maximal divisible subgroup of $V(X)$, 
and $\pi_1^{\ab}(X)^{\geo}$ is the kernel of the map $\pi_1^{\ab}(X)\to \pi_1^{\ab}(\Spec(F))= G_F^{\ab}$ 
induced from the structure map $X\to \Spec(F)$. Here, 
the quotient $V(X)_{\mathrm{fin}} := V(X)/V(X)_{\mathrm{div}}$ is a finite group,  
and $r(X)\ge 0$ is the \emph{rank} of $X$ (\cite[II, Def.~2.5]{Sai85a}).\linelabel{com:19} 

{\red By \cite[II, Sect.~6]{Sai85a},\linelabel{com:20} 
for any finite extension $F'/F$, the rank of $X_F' = X\otimes_FF'$ satisfies the inequalities 
\[
r(X) \le r(X_{F'}) \le 2g,
\]
where $g$ denotes the genus of $X$. 
According to \cite[II, Lem.~4.5]{Sai85a}, 
there exists a finite extension $F'/F$ with residue field $k'$ 
such that the base change $X_{F'}$ admits a regular model $\sX$ over $\cO_{F'}$ 
whose special fiber $\sX_s$ satisfies the following conditions: 
(a) $\sX_s$ is reduced; (b) $\sX_s$ has only $k'$-rational ordinary double points as singularities; 
and (c) each irreducible component of $\sX_s$ is regular and geometrically irreducible over $k'$. 
In fact, the proof of \cite[II, Lem.~4.5]{Sai85a} 
uses the semi-stable reduction theorem for curves so that 
the extension $F'/F$ above can be taken to be separable (cf.~\cite[Thm.~10.4.3]{Liu02}, \cite[\href{https://stacks.math.columbia.edu/tag/0CDN}{Tag 0CDN}]{stacks-project}). 
For our purposes, we may thus replace $F$ by such an extension $F'$. 

In the proof of \cite[II, Thm.~6.2]{Sai85a}, 
the special fiber $\sJ_s^0$ of the identity component of the N\'eron model $\sJ$ of the Jacobian variety $J$ 
fits into an exact sequence 
\[
0 \to \Gm^{r(X)}\to \sJ_s^0\to \prod_{i=1}^n (J_i)_k \to 0, 
\]
where each $(J_i)_k$ denotes the Jacobian variety of an irreducible component $C_i$ of $\sX_s$.
This yields the equality $r(X) = g - \sum_{i=1}^ng_i$, where $g_i$ denotes the genus of $C_i$. 
It follows that $r(X)$ is the maximal value among all 
$r(X_{F'})$ as $F'$ ranges over finite extensions of $F$ 
(a value referred to as the \textbf{geometric rank} of $X$ in \cite[II, Def.~6.1]{Sai85a}). 
In fact, for any finite extension $F'/F$, 
the special fiber $\sX_s'$ of the base change $\sX' := \sX_{\cO_{F'}}$ of $\sX$ to $\Spec(\cO_{F'})$ satisfies the conditions 
(a), (b), and (c) above. 
The irreducible components of $\sX_s'$ are the base changes 
$C_i\otimes_kk'$ of the components $C_i$.}

Take any symbol of the form $\set{a,x}_{F/F}$ in $\Big(\Ga \otimesM \underline{V}(X)\Big)(F)/\wp$. 
By the norm arguments, the equality $\set{a,x}_{F/F} = 0$ implies the assertion. 
Consider the Artin-{\red Schreier}\linelabel{com:18} extension $E := F(\wp^{-1}(a))$ of $F$ 
which is a cyclic $p$-extension. 

{\red As noted above, the rank $r(X_E)$ of $X_E$ is same as that of $X$. 
Putting $r := r(X) = r(X_E)$,} the following diagram is commutative
\[
\xymatrix{
0 \ar[r] & V(X_{E})_{\mathrm{fin}} \ar[d]_{N_{E/F}} \ar[r] & (T^{\et}(J))_{G_{E}}\ar@{->>}[d]\ar[r] & \Zhat^{r} \ar@{=}[d] \ar[r] & 0 \\
0 \ar[r] & V(X)_{\mathrm{fin}} \ar[r] & (T^{\et}(J))_{G_F} \ar[r] & \Zhat^{r} \ar[r] & 0,
}
\]
where the left vertical map is the norm map $N_{E/F} \colon V(X_E) \to V(X)$, 
the middle map is the natural map arising from $G_E \hookrightarrow G_F$ 
and is surjective. 
Accordingly, the induced map $N_{E/F}\colon V(X_E)/p \twoheadrightarrow V(X)/p$ 
is surjective. 
Since we have $N_{E/F}(\xi) = x$ for some $\xi \in V(X_E)/p$, 
the projection formula gives 
\[
\set{a,x}_{F/F} = \set{a,N_{E/F}(\xi)}_{F/F} = \set{\res_{E/F}(a),\xi}_{E/F} = 0.
\]
\end{proof}

We recall that a curve 
$C$ over $F$ is said to have \textbf{good reduction}\linelabel{com:21}
if the special fiber $C_{k} = \sC \otimes_F k$ of a model $\sC$ of $C$ is smooth over the finite residue field $k$. 

\begin{thm}\label{thm:local}
    Suppose that $F$ is a local field with finite residue field $k$, 
    and $X  = X_1\times \cdots \times X_d$ 
    is the product of projective smooth and geometrically irreducible curves $X_i$ over $F$. 
    \begin{enumerate}
    \item 
    The conditions $(\mathrm{Van}_1)$ and $(\mathrm{Cor}_1)$ for $t=1$ in \autoref{lem:isom} hold. 
    In particular, 
    for any $r\ge 1$, we have isomorphisms 
    \begin{gather}
            \Big(W_r\otimesM \underline{CH}^{d+1}(X,1)\Big)(F)/\wp \simeq KH_0^{(1)}(X,\Z/p^r),\quad \mbox{and}\\
            f_{\ast}^{KH}\colon KH_0^{(1)}(X,\Z/p^r) \xrightarrow{\simeq} H_{p^r}^{2}(F).
    \end{gather}
    \item 
    We further assume that each $X_i$ has good reduction. 
    Then, the residue map 
    \[
        \Delta_0\colon KH_0^{(1)}(X,\bZ/p^r) \xrightarrow{\simeq} KH_0^{(0)}(X_k,\bZ/p^r)
    \] 
    is bijective. 
    \end{enumerate}	
\end{thm}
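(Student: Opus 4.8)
Since $[F:F^p]=p$ we are in the situation of \autoref{lem:isom} with $s=1$; the plan is to verify the conditions $(\mathrm{Van}_1)$ and $(\mathrm{Cor}_1)$ and then quote that proposition. First, recall $\underline{A}^{d+1}(X,1)=\underline{V}(X)$. Since $X=X_1\times\cdots\times X_d$ is a product of projective smooth geometrically irreducible curves over $F$, \autoref{lem:GaVXprod} reduces the vanishing $\Big(\Ga\otimesM\underline{V}(X)\Big)(F)/\wp=0$ to the vanishing $\Big(\Ga\otimesM\underline{V}(X_i)\Big)(F)/\wp=0$ for each single curve $X_i$ over the local field $F$, which is exactly \autoref{lem:GaVXlocal}; hence $(\mathrm{Van}_1)$ holds. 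Next, let $F'/F$ be a finite separable extension; then $F'$ is again a local field with finite residue field, and under the identifications $H^2_p(F)\simeq\Br(F)[p]$, $H^2_p(F')\simeq\Br(F')[p]$ of \eqref{eq:HBr} — compatible with the corestriction \eqref{def:trace} — local class field theory gives $\mathrm{inv}_F\circ\Cor_{F'/F}=\mathrm{inv}_{F'}$, so $\Cor_{F'/F}\colon\Br(F')[p]\to\Br(F)[p]$ is an isomorphism and in particular injective; hence $(\mathrm{Cor}_1)$ holds. Now \autoref{lem:isom} applies and yields the two asserted isomorphisms (and, by \eqref{eq:HBr}, identifies the target $H^2_{p^r}(F)$ with $\Br(F)[p^r]\simeq\Z/p^r$).

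\textbf{Proof of Part (2).}
Assume now that each $X_i$ has good reduction, and fix a smooth proper $\cO_F$-model $\sX_i$ of $X_i$; set $\sX:=\sX_1\times_{\cO_F}\cdots\times_{\cO_F}\sX_d$, a smooth (hence regular) and proper model of $X$ over $\cO_F$ with special fiber $X_k=(X_1)_k\times_k\cdots\times_k(X_d)_k$. Each $(X_i)_k$ is smooth and proper over the finite field $k$, and geometrically connected — since $X_i$ is geometrically connected and $\sX_i$ is regular and proper over $\cO_F$, one has $H^0(\sX_i,\cO_{\sX_i})=\cO_F$, whence $H^0((X_i)_k,\cO_{(X_i)_k})=k$ — and, being smooth, geometrically irreducible; therefore $X_k$ is projective smooth and geometrically irreducible over $k$. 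Thus \autoref{thm:finite} applies to $X_k$ over $k$ and gives an isomorphism $f_\ast^{KH}\colon KH_0^{(0)}(X_k,\Z/p^r)\xrightarrow{\simeq}H_{p^r}^1(k)\simeq\Z/p^r$, while part (1) gives $f_\ast^{KH}\colon KH_0^{(1)}(X,\Z/p^r)\xrightarrow{\simeq}H_{p^r}^2(F)\simeq\Br(F)[p^r]\simeq\Z/p^r$. It remains to read off $\Delta_0$ through these descriptions.

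The residue (specialization) map on the Kato complexes attached to the arithmetic scheme $\sX/\cO_F$ is functorial for the proper structure morphism $\sX\to\Spec\cO_F$ — it is assembled from norm/corestriction maps, which commute with proper push-forward — so there is a commutative square
\[
\xymatrix@C=15mm{
KH_0^{(1)}(X,\Z/p^r)\ar[r]^-{\Delta_0}\ar[d]_-{f_\ast^{KH}}^-{\simeq} & KH_0^{(0)}(X_k,\Z/p^r)\ar[d]^-{f_\ast^{KH}}_-{\simeq}\\
H_{p^r}^2(F)\ar[r]^-{\Delta_0} & H_{p^r}^1(k),
}
\]
where the lower horizontal map is the residue map of the local field $F$; this is an isomorphism by local class field theory — equivalently, it is the $\dim X=0$ case of \autoref{conj:Kato2}, amounting to the residue isomorphism $\Br(F)[p^r]\xrightarrow{\simeq}H^1(k,\Z/p^r)=H_{p^r}^1(k)$. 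Since the two vertical maps and the lower map are isomorphisms, the upper $\Delta_0$ is an isomorphism as well. The main obstacle in this argument is precisely the commutativity of the displayed square, i.e.\ the compatibility of Kato's residue map with proper push-forward along $\sX\to\Spec\cO_F$; granting this — it follows from the localization formalism for the Kato complexes of arithmetic schemes (cf.\ \cite[Sect.~2]{JS09}) — the remainder is formal.
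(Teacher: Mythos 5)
Your proposal is correct and follows essentially the same route as the paper: part (1) verifies $(\mathrm{Van}_1)$ via \autoref{lem:GaVXprod} and \autoref{lem:GaVXlocal} and $(\mathrm{Cor}_1)$ via the bijectivity of the Brauer-group corestriction, then invokes \autoref{lem:isom}; part (2) uses the same commutative square comparing $\Delta_0$ with $f_\ast^{KH}$ and the residue isomorphism $\Br(F)[p^r]\xrightarrow{\simeq}H^1_{p^r}(k)$. Your added checks (geometric irreducibility of $X_k$, and flagging the compatibility of the residue map with push-forward, which the paper asserts without comment) are reasonable elaborations rather than a different argument.
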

\begin{proof}
(1) 
 	Note that 
 $H^2_{p^r}(F) \simeq  \Br(F)[p^r]$ is the $p^r$-torsion part of the Brauer group of $F$ {\red (cf.~\eqref{eq:HBr} in \autoref{sec:intro})\linelabel{com:22}}. 
 The condition ($\mathrm{Cor}_1$) in \autoref{lem:isom} follows from the fact 
 	that the  
 corestriction of the Brauer groups $\Cor_{F'/F} \colon \Br(F')[p] \to \Br(F)[p]$ is bijective 
  for a finite separable field extension $F'/F$. 
    We have
	$\underline{A}^{d+1}(X,1) = \underline{V}(X)$. 
    By \autoref{lem:GaVXlocal} and \autoref{lem:GaVXprod}, 
    we have $\Big(\Ga\otimesM \underline{A}^{d+1}(X,1)\Big)(F)/\wp  = 0$. 
	
\smallskip
\noindent
(2)
There is a commutative diagram
\[
\xymatrix{
KH_0^{(1)}(X,\Z/p^r)\ar[d]_{\Delta_0}\ar[r]^-{f_{\ast}^{KH}}_-{\simeq }&
H^2_{p^r}(F)\ar[d]^{\partial}\\
KH_0^{(0)}(X_k,\Z/p^r)\ar[r]^-{\simeq}
&
H_{p^r}^1(k), 
}
\]
where the right vertical map $\partial\colon H^2_{p^r}(F) = \Br(F)[p^r] \to H^1_{p^r}(k) = H^1(k,\Z/p^r)$ is bijective. 
This diagram shows that $\Delta_0$ is an isomorphism.
\end{proof}

\subsection*{Global fields}
Next, we suppose that the field $F$ is a \emph{global field}, 
that is, a function field of one variable 
over a finite field $k$ of  $\ch(k) = p>0$. 
For any place $v$ of $F$, 
we denote by $F_v$
the completion of $F$ with respect to the place $v$. 

\begin{conj}[{\cite[Conj.~0.4]{Kat86}, \cite[Conj.~A]{JS03}}]
\label{conj:Kato3}
Suppose that $F$ is a global field. 
Then, for $j>0$, there is an isomorphism 
\[
KH_j^{(1)}(X,\Z/p^r) \xrightarrow{\simeq} \bigoplus_v KH_j^{(1)}(X_{v},\Z/p^r) 
\]	
and, for $j=0$, the short exact sequence below exists:
\[
0\to KH_0^{(1)}(X,\Z/p^r) \to \bigoplus_v KH_0^{(1)}(X_v,\Z/p^r) \to \Z/p^r \to 0, 
\]
where $v$ runs over the set of the places of $F$ and $X_v := X\otimes_F F_v$ for each place $v$. 
\end{conj}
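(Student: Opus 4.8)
The plan is to establish the $j=0$ statement in the setting of \autoref{thm:main_intro}~(2), namely when $X = X_1\times\cdots\times X_d$ is a product of smooth projective geometrically irreducible curves over the global function field $F$ with $X_i(F)\neq\emptyset$, by identifying the conjectural short exact sequence with the $p^r$-torsion part of the Hasse-Brauer-Noether exact sequence for $F$. The key point is that in this case both the global term $KH_0^{(1)}(X,\Z/p^r)$ and each local term $KH_0^{(1)}(X_v,\Z/p^r)$ are computed by the extended differential symbol map (\autoref{thm:H24}) and come out as $p^r$-torsion subgroups of Brauer groups.

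First I would prove the global-field analogue of \autoref{thm:local}~(1) by applying \autoref{lem:isom} with $t=s=1$. Since $X(F)\neq\emptyset$, the condition $(\mathrm{Pt})$ holds with $F'=F$; moreover $\underline{A}^{d+1}(X,1)=\underline{V}(X)$, and by \autoref{lem:GaVXprod} the vanishing $(\mathrm{Van}_1)$ reduces to showing $\bigl(\Ga\otimesM\underline{V}(X_i)\bigr)(F)/\wp=0$ for each curve $X_i$, which is the global analogue of \autoref{lem:GaVXlocal}. Granting this, \autoref{lem:isom} yields an isomorphism $f_{\ast}^{KH}\colon KH_0^{(1)}(X,\Z/p^r)\xrightarrow{\simeq}H^{2}_{p^r}(F)\stackrel{\eqref{eq:HBr}}{\simeq}\Br(F)[p^r]$. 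For every place $v$ the base change $X_v=X_{1,v}\times\cdots\times X_{d,v}$, where $X_{i,v}:=X_i\otimes_F F_v$, is again a product of smooth projective geometrically irreducible curves over the local field $F_v$, so \autoref{thm:local}~(1) applies directly and gives $f_{\ast}^{KH}\colon KH_0^{(1)}(X_v,\Z/p^r)\xrightarrow{\simeq}H^2_{p^r}(F_v)\simeq\Br(F_v)[p^r]$.

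Next I would check that, under these identifications, the first arrow $KH_0^{(1)}(X,\Z/p^r)\to\bigoplus_v KH_0^{(1)}(X_v,\Z/p^r)$ of the conjectural sequence, induced termwise by the maps of Kato complexes attached to the completions $F(x)\to\prod_{w\mid v}F(x)_w$, corresponds to the localization map $\Br(F)[p^r]\to\bigoplus_v\Br(F_v)[p^r]$, and that the second arrow corresponds to $\sum_v\mathrm{inv}_v\circ f_{\ast}^{KH}$. This amounts to the compatibility of the corestrictions $\Cor_{F(x)/F}$ defining $f_{\ast}^{KH}$ with base change to $F_v$ (equivalently, compatibility of transfers in Galois cohomology with restriction to decomposition subgroups), together with the commutativity of \eqref{diag:dlog}; it should be routine given the functoriality already developed in \autoref{sec:KH}. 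With this in place, the conjectural sequence for $j=0$ becomes
\[
0\to\Br(F)[p^r]\to\bigoplus_v\Br(F_v)[p^r]\xrightarrow{\sum_v\mathrm{inv}_v}\Z/p^r\to 0,
\]
which is precisely the $p^r$-torsion part of the Hasse-Brauer-Noether sequence for $F$, hence exact.

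The main obstacle will be the global-field analogue of \autoref{lem:GaVXlocal}, namely the vanishing $\bigl(\Ga\otimesM\underline{V}(X)\bigr)(F)/\wp=0$ for a smooth projective geometrically irreducible curve $X$ over the global function field $F$ with $X(F)\neq\emptyset$. Imitating the local argument, after replacing $F$ by a finite separable extension and using $\underline{V}(X)=K(-;\Jac_X,\Gm)$, one would kill a symbol $\set{a,x}_{F/F}$ by passing to the Artin-Schreier extension $E=F(\wp^{-1}(a))$ and applying the projection formula; this requires surjectivity of the norm $N_{E/F}\colon V(X_E)/p\to V(X)/p$ along cyclic $p$-extensions, which must be extracted from the class field theory of curves over global function fields (Bloch, Kato-Saito, S.~Saito) — in particular from finiteness of $V(X)$ modulo its maximal divisible subgroup and from control of the relevant ranks — in place of Saito's explicit local structure sequence \eqref{eq:geofg}. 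The $j>0$ part of the conjecture lies outside the scope of this approach, and I would not attempt it here.
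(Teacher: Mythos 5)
This statement is a conjecture of Kato which the paper does not prove in general; the paper only establishes the $j=0$ case for products of curves satisfying $(\mathrm{Pt})$ in \autoref{thm:global}, and your proposal correctly recognizes that this special case is all that is accessible. For that case your route is essentially the paper's: verify $(\mathrm{Pt})$ and $(\mathrm{Van}_1)$ by reducing via \autoref{lem:GaVXprod} to the global analogue of \autoref{lem:GaVXlocal} (this is \autoref{lem:GaVX}, whose proof does indeed come down to surjectivity of the norm $V(X_E)/p\to V(X)/p$ along Artin-Schreier extensions, obtained from the class field theory of curves over global fields and the known $\dim(X)=1$ cases of the Kato conjectures), then identify $KH_0^{(1)}(X,\Z/p^r)$ and each $KH_0^{(1)}(X_v,\Z/p^r)$ with $\Br(F)[p^r]$ and $\Br(F_v)[p^r]$ via $f_{\ast}^{KH}$, and conclude by the Hasse-Brauer-Noether exact sequence.
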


The conjecture above holds when $\dim(X) = 1$ (\cite{Kat86}).

\begin{lem}
\label{lem:GaVX}
    Suppose that $F$ is a global field 
    and $\dim (X) = 1$.
    Then, we have 
    \[
    \Big(\Ga \otimesM \underline{V}(X)\Big)(F)/\wp = 0.
    \]
\end{lem}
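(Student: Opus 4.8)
The plan is to mimic the proof of \autoref{lem:GaVXlocal}, substituting the global class field theory of curves for the local one. First, by \autoref{lem:normGa}~(1) the transfer $\tr_{F'/F}$ is surjective on $\bigl(\Ga\otimesM\underline V(X)\bigr)(-)/\wp$ for every finite separable extension $F'/F$, so it suffices to prove the vanishing after replacing $F$ by such an $F'$; since $X$ is projective, smooth and geometrically irreducible, $X(F^{\sep})\neq\emptyset$, and hence we may assume $X(F)\neq\emptyset$. It then suffices to show that an arbitrary symbol $\set{a,x}_{F/F}$, with $a\in\Ga(F)=F$ and $x\in\underline V(X)(F)=V(X)$, is zero in $\bigl(\Ga\otimesM\underline V(X)\bigr)(F)/\wp$. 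Let $E:=F(\wp^{-1}(a))$ be the Artin--Schreier extension attached to $a$, which is separable of degree $1$ or $p$. If $[E:F]=1$ then $a=\wp(\alpha)$ for some $\alpha\in F$ and $\set{a,x}_{F/F}=\wp\bigl(\set{\alpha,x}_{F/F}\bigr)$ vanishes in the quotient; so we may assume $[E:F]=p$.

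The crux is the following claim: the norm (transfer) map $N_{E/F}\colon V(X_E)/p\to V(X)/p$ is surjective. Granting it, choose $\xi\in V(X_E)$ with $N_{E/F}(\xi)-x\in pV(X)$. Since $\Ga(F)$ is $p$-torsion, $\set{a,py}_{F/F}=\set{pa,y}_{F/F}=0$ for every $y\in V(X)$, so $\set{a,N_{E/F}(\xi)}_{F/F}=\set{a,x}_{F/F}$; the projection formula \eqref{eq:pf} then gives $\set{a,x}_{F/F}=\set{\res_{E/F}(a),\xi}_{E/F}$. As $E=F(\wp^{-1}(a))$, we have $\res_{E/F}(a)=\wp(\alpha)$ for some $\alpha\in E$, and therefore $\set{\res_{E/F}(a),\xi}_{E/F}=\wp\bigl(\set{\alpha,\xi}_{E/F}\bigr)=0$ in $\bigl(\Ga\otimesM\underline V(X)\bigr)(F)/\wp$, as desired.

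To prove the claim I would invoke the class field theory for curves over global fields (in the style of Kato--Saito): parallel to the local reciprocity sequence \eqref{eq:geofg}, it describes $V(X)/V(X)_{\mathrm{div}}$ --- which is finite, by the Katz--Lang finiteness of the geometric part of $\pi_1^{\ab}(X)$ over a finitely generated field --- in terms of the $G_F$-coinvariants of the \'etale Tate module $T^{\et}(J)$ of $J=\Jac_X$ (equivalently, of the abelianised geometric fundamental group of $X$). As $[\,G_F:G_E\,]=p$, the natural surjection $\bigl(T^{\et}(J)\bigr)_{G_E}\twoheadrightarrow\bigl(T^{\et}(J)\bigr)_{G_F}$ is a further quotient, and under the reciprocity isomorphisms it is identified with $N_{E/F}$; a diagram chase as in \autoref{lem:GaVXlocal} (using also that the residual rank contribution is unchanged by $E/F$) yields surjectivity of $N_{E/F}$ on the mod-$p$ reductions. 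Extracting the relevant geometric datum may first require passing to a finite separable extension over which $X$ acquires a suitable regular (semistable) model, which is harmless by the first paragraph.

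The main obstacle is exactly this claim --- that the geometric part of $V(X)$ does not grow under the cyclic degree-$p$ extension $E/F$. In the local setting this reduced to the stability of the geometric rank $r(X)$ under base change, which is established via the semistable reduction theorem and an analysis of regular models; the global analogue needs the global class field theory of curves together with the requisite finiteness statements (Katz--Lang, Kato--Saito) and their compatibility with base change. An alternative route, avoiding a direct base-change analysis, would be a local--global injection $\bigl(\Ga\otimesM\underline V(X)\bigr)(F)/\wp\hookrightarrow\bigoplus_v\bigl(\Ga\otimesM\underline V(X)\bigr)(F_v)/\wp$ in the spirit of the Hasse--Brauer--Noether sequence, whose right-hand side vanishes by \autoref{lem:GaVXlocal}; but this too rests on the same class field theory.
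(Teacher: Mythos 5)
Your reduction is exactly the paper's: replace $F$ by a finite separable extension so that $X(F)\neq\emptyset$ (plus, in the paper, so that $X$ has good reduction outside a finite set $S$ and suitable N\'eron models at $v\in S$), take a symbol $\set{a,x}_{F/F}$, pass to $E=F(\wp^{-1}(a))$, and observe that everything follows from surjectivity of $N_{E/F}\colon V(X_E)/p\to V(X)/p$ via the projection formula. You have correctly isolated the crux. The gap is in your proof of that crux: you propose to transplant the local argument by invoking a global analogue of the sequence \eqref{eq:geofg}, i.e.\ $0\to V(X)_{\mathrm{fin}}\to (T^{\et}(J))_{G_F}\to\Zhat^{r}\to 0$. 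No such sequence is available over a global field --- \eqref{eq:geofg} is a theorem of Saito/Yoshida specifically for curves over \emph{local} fields. Over a global field the reciprocity map from $V(X)$ to the geometric fundamental group is not surjective in general; its cokernel mod $p$ is precisely $KH_1^{(1)}(X,\Z/p)$, and controlling that cokernel is the real work.

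The paper's actual mechanism is different: using the Hochschild--Serre spectral sequence for $H^{*}(X,\Omega^2_{\log})$ and Kato's injection $CH^{2}(X,1)/p\hookrightarrow H^1(X,\Omega^2_{\log})$ (whose cokernel is $KH_1^{(1)}(X,\Z/p)$), one gets a short exact sequence $0\to V(X)/p\to H^1(F,M)\to KH_1^{(1)}(X,\Z/p)\to 0$ with $M=H^0(X_{F^{\sep}},\Omega^2_{\log})$. Surjectivity of $\Cor_{E/F}$ on $H^1(-,M)$ follows from Shapiro's lemma and $\mathrm{cd}_p\le 2$; but to conclude surjectivity of $N_{E/F}$ on $V/p$ by a diagram chase one must also show that $KH_1^{(1)}(X_E,\Z/p)_G\to KH_1^{(1)}(X,\Z/p)$ is bijective. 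This is where the Hasse principle for $KH_1^{(1)}$ of curves (Kato's theorem), the vanishing at places of good reduction, and the N\'eron-model analysis at bad places (imported from the proof of the local lemma) all enter. Your sketch does not address this cokernel at all, so the diagram chase as written would only give surjectivity onto the image of $H^1(E,M)$ in $V(X)/p$, not onto all of $V(X)/p$. Your alternative local--global injection for $\bigl(\Ga\otimesM\underline V(X)\bigr)(F)/\wp$ is not established anywhere in the paper either (the Hasse--Brauer--Noether sequence is used only for $KH_0^{(1)}$ after the main theorem is proved), so it cannot be taken as a substitute.
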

\begin{proof}
    To show $\Big(\Ga \otimesM \underline{V}(X)\Big)(F)/\wp = 0$, 
    we can replace $F$ with a finite separable field extension (\autoref{lem:normGa} (1)). 
    From this, we may assume that 
    $X(F) \neq \emptyset$, 
    and, 
    there exists a finite set $S$ of places in $F$ such that 
    $X$ has good reduction outside $S$, and 
    for each $v\in S$, 
    the special fiber of the connected component of the N\'eron model of 
    the Jacobian variety $\Jac_{X_v}$ of $X_v := X\otimes_F F_v$ is an extension of an abelian variety by a split torus.
    
    As in the proof of \autoref{lem:GaVXlocal}, 
    take a symbol $\set{a,x}_{F/F}$ in $\Big(\Ga \otimesM \underline{V}(X)\Big)(F)/\wp$ 
    and put $E := F(\wp^{-1}(a))$. 
    To show $\set{a,x}_{F/F} = 0$, it is sufficient to show that 
    the norm $V(X_E)/p\to V(X)/p$ is surjective, 
    where $X_E := X\otimes_F E$. 
    There is a spectral sequence 
    \[
    E_2^{i,j} = H^i(F, H^j(X_{F^{\sep}}, \Omega^2_{\log})) \Rightarrow H^{i+j}(X,\Omega^2_{\log})
    \]
    (cf.\  \cite[Proof of Prop.~4]{KS83b}). 
    We have $E_2^{i,j} = 0$ for $i\ge 2$ by the cohomological reason (\cite[Prop.~6.5.10]{NSW08}) 
    and for $j\ge 2$ by \cite[Lem.~1, (2)]{KS83b}.
    The edge homomorphisms give the bottom horizontal exact sequence in the commutative diagram below:
    \begin{equation}
        \label{diag:edge}
        \vcenter{
    \xymatrix{
    0\ar[r] & V(X)/p \ar[r]\ar[d]^{\tau} & CH^{2}(X,1)/p \ar[d]^{\rho}\ar[r]^-{f_{\ast}^{CH}} & F^{\times}/p\ar[d]\ar[r] & 0 \\
    0\ar[r]  & H^1(F,M) \ar[r] & H^1(X,\Omega_{\log}^2) \ar[r]  & H^0(F,H^1(X_{F^{\sep}},\Omega_{\log}^2)) \ar[r] & 0,
    }}
    \end{equation}
    where $M = H^0(X_{F^{\sep}}, \Omega^2_{\log})$, 
    and the middle vertical map $\rho$ is given in \cite[Lem.~2.4]{Kat86} which is injective and its cokernel is 
    \[
    \Coker(\rho) \simeq \Ker\left(\partial\colon H^3(F(X), \Z/p(2))\to \bigoplus_{x\in X_0} H^2(F(x), \Z/p(2))\right)
    = KH_1^{(1)}(X,\Z/p). 
    \]
    Here, 
    the right vertical map is bijective due to $H^1(X_{F^{\sep}},\Omega_{\log}^2) \simeq (F^{\sep})^{\times}/p$ (\cite[Lem.~1 (2)]{KS83b}). 
    Since $X(F)\neq \emptyset$ and 
    $F^{\times}$ is $p$-torsion free, the top sequence in \eqref{diag:edge} is split exact.
    Therefore, we obtain a short exact sequence 
    \[
    0 \to V(X)/p \to H^1(F,M) \to KH_1^{(1)}(X,\Z/p) \to 0.
    \]
    Consider the commutative diagram with exact rows: 
    \begin{equation}
        \label{diag:Cor}
        \vcenter{
    \xymatrix{
    0 \ar[r] & V(X_E)/p \ar[d]^{N_{E/F}} \ar[r] & H^1(E,M) \ar@{->>}[d]^{\Cor_{E/F}} \ar[r] & KH_1^{(1)}(X_E,\Z/p)\ar[d] \ar[r] & 0 \\
    0 \ar[r] & V(X)/p \ar[r] & H^1(F,M) \ar[r] & KH_1^{(1)}(X,\Z/p) \ar[r] & 0.
    }}
    \end{equation}
    Here, the middle vertical map is surjective. 
    In fact, 
    the natural homomorphism $\phi\colon \mathrm{Ind}_F^E(M) \to M$ of $G_F$-modules 
    induces 
    $H^1(F,\mathrm{Ind}_F^E(M)) \to H^1(F,M) \to H^2(F,\Ker(\phi)) = 0$ 
    and this compatible with the corestriction $\Cor_{E/F}\colon H^1(E,M) \to H^1(F,M)$ 
    through 
    the isomorphism $H^{\ast}(F,\mathrm{Ind}_F^E(M)) \simeq H^{\ast}(E,M)$ by Shapiro's lemma (\cite[Prop.~1.6.4]{NSW08}). 
    
    For each place $v\not\in S$, $X_v$ has good reduction and hence 
    $KH_1^{(1)}(X_v,\Z/p) = 0$ by \autoref{conj:Kato2}, and \autoref{conj:Kato} (Kato's theorem \cite[Sect.~2]{Kat86}, see also \cite[Cor.~2.9]{Kat86}).
    By \autoref{conj:Kato3}, the corestriction maps induce the commutative diagram
    \begin{equation}
        \label{diag:KH}
    \vcenter{
    \xymatrix{
    KH_1^{(1)}(X_E,\Z/p) \ar[d]\ar[r]^-{\simeq} & \displaystyle \bigoplus_{v\in S}\bigoplus_{w\mid v} KH_1^{(1)}(X_w,\Z/p)\ar[d] \\
    KH_1^{(1)}(X,\Z/p) \ar[r]^-{\simeq} & \displaystyle\bigoplus_{v\in S} KH_1^{(1)}(X_v,\Z/p),
    }}
    \end{equation}
    where $X_w := X_E\otimes_E E_w$ for each place $w$ in $E$.
    By \cite[Cor.~2.9]{Kat86}, for each place $v$, 
    $KH_1^{(1)}(X_v,\Z/p)$ is the cokernel of the reciprocity map 
    $CH^{2}(X_v,1)/p \hookrightarrow \pi_1^{\ab}(X_v)/p \simeq H^1(X_v,\Omega_{\log}^2) = H^3(X_v,\Z/p(2))$. 
    By $X_v(F_v)\neq \emptyset$, we also have the short exact sequence 
    \[
    0\to V(X_v)/p \to \pi_1^{\ab}(X_v)^{\geo}/p\to KH_1^{(1)}(X_v,\Z/p) \to 0.
    \]
    For each place $v\in S$ in $F$, 
    by the assumptions on the Jacobian variety $\Jac(X_v)$, 
    the corestriction $\Cor_{w\mid v} \colon KH_1^{(1)}(X_w,\Z/p) \to KH_1^{(1)}(X_v,\Z/p)$ is bijective 
    for all place $w\mid v$ (by the proof of \autoref{lem:GaVXlocal}). 
    For a place $v\in S$, 
    the Galois group $G := \Gal(E/F)$ acts the set of the places $w$ in $E$ above $v$, 
    we have 
    $\left(\bigoplus_{w\mid v}KH_1^{(1)}(X_w,\Z/p)\right)_{G} = KH_1^{(1)}(X_{w(v)},\Z/p)$ 
    for some place $w(v)$ in $E$ above $v$. 
    By the commutative diagram \eqref{diag:KH}, 
    $KH_1^{(1)}(X_E,\Z/p)_G \to KH_1^{(1)}(X,\Z/p)$ is bijective. 
    From \eqref{diag:Cor}, 
    \[
    \xymatrix{
    &\left(V(X_E)/p\right)_G \ar[d]^{N_{E/F}}\ar[r] & H^1(E,M)_G \ar[r]\ar@{->>}[d]^{\Cor_{E/F}} & KH_1^{(1)}(X_E,\Z/p)_G \ar[d]^{\simeq}\ar[r] & 0 \\
    0 \ar[r] & V(X)/p \ar[r] & H^1(F,M) \ar[r] & KH_1^{(1)}(X,\Z/p)\ar[r] & 0 .
    }
    \]
    The commutative diagram above implies 
    that the norm map $V(X_E)/p \to V(X)/p$ is surjective, and thus 
    we obtain $\Big(\Ga\otimesM \underline{V}(X)\Big)(F)/\wp = 0$.
\end{proof}

 \begin{thm}\label{thm:global}
 Let $X =  X_1\times \cdots \times X_d$ be the product of 
 projective smooth and geometrically irreducible curves $X_1,\ldots, X_d$ over a global field $F$. 
 We assume the condition $(\mathrm{Pt})$ in \autoref{lem:isom} for each $X_i$ (see \autoref{rem:index} below).
 
\begin{enumerate}
	\item 
    The conditions $(\mathrm{Pt})$ for $X$ and $(\mathrm{Van}_1)$ hold. 
    In particular, 
    for any $r\ge 1$, we have isomorphisms 
    \begin{gather}
            \Big(W_r\otimesM \underline{CH}^{d+1}(X,1)\Big)(F)/\wp \simeq KH_0^{(1)}(X,\Z/p^r),\quad \mbox{and}\\
            f_{\ast}^{KH}\colon KH_0^{(1)}(X,\Z/p^r) \xrightarrow{\simeq} H_{p^r}^{2}(F).
    \end{gather}
    \item
	There is a short exact sequence 
	\[
	0 \to KH_0^{(1)}(X,\Z/p^r) \to \bigoplus_v KH_0^{(1)}(X_v,\Z/p^r) \to \Z/p^r\to 0.
	\]
\end{enumerate}
\end{thm}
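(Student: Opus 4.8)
The plan is to obtain part~(1) directly from \autoref{lem:isom} once the two input conditions $(\mathrm{Pt})$ and $(\mathrm{Van}_1)$ are in place (a global field of positive characteristic has $[F:F^p]=p$, so we are in the case $s=1$), and then to deduce part~(2) by transporting it, through the isomorphisms $f_{\ast}^{KH}$ and \eqref{eq:HBr}, to the fundamental exact sequence of global class field theory. For $(\mathrm{Pt})$ for $X$: each factor $X_i$ has, by assumption, a finite separable extension $F_i'/F$ with $X_i(F_i')\neq\emptyset$ and $[F_i':F]$ prime to $p$; taking $F'$ to be the compositum of $F_1',\dots,F_d'$ inside a separable closure of $F$ yields a finite separable extension with $X(F')=\prod_i X_i(F')\neq\emptyset$, and $[F':F]$ is prime to $p$ since, building the compositum one factor at a time, the $j$-th step $F_1'\cdots F_j'\subseteq F_1'\cdots F_{j+1}'$ has degree dividing $[F_{j+1}':F]$. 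For $(\mathrm{Van}_1)$, note $\underline{A}^{d+1}(X,1)=\underline{V}(X)$; \autoref{lem:GaVX} gives $\bigl(\Ga\otimesM\underline{V}(X_i)\bigr)(F)/\wp=0$ for each curve $X_i$ over the global field $F$, and then \autoref{lem:GaVXprod} yields $\bigl(\Ga\otimesM\underline{V}(X)\bigr)(F)/\wp=0$. Applying \autoref{lem:isom} with $s=1$ now gives both isomorphisms in~(1).

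For part~(2), part~(1) together with \eqref{eq:HBr} identifies $KH_0^{(1)}(X,\Z/p^r)$ with $H_{p^r}^{2}(F)\simeq\Br(F)[p^r]$. For every place $v$ of $F$ the base change $X_v=(X_1)_{F_v}\times\cdots\times(X_d)_{F_v}$ is again a product of projective smooth geometrically irreducible curves over the local field $F_v$, so \autoref{thm:local}~(1) applies (it uses only $(\mathrm{Cor}_1)$ and $(\mathrm{Van}_1)$ over $F_v$, no rational point being needed) and identifies $KH_0^{(1)}(X_v,\Z/p^r)$ with $H_{p^r}^{2}(F_v)\simeq\Br(F_v)[p^r]$. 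Taking the last arrow of the sequence in~(2) to be the composite of $\bigoplus_v f_{v,\ast}^{KH}$ with $\sum_v\mathrm{inv}_v$, these identifications fit into a ladder whose bottom row is
\[
0\to\Br(F)[p^r]\to\bigoplus_v\Br(F_v)[p^r]\xrightarrow{\ \sum_v\mathrm{inv}_v\ }\Z/p^r\to 0 .
\]
This row is exact: it is obtained by taking $p^r$-torsion in the Hasse--Brauer--Noether sequence $0\to\Br(F)\to\bigoplus_v\Br(F_v)\to\Q/\Z\to 0$, where left exactness and exactness in the middle are immediate because every $\Br(F_v)\hookrightarrow\Q/\Z$, and surjectivity onto $\Z/p^r=(\Q/\Z)[p^r]$ holds already on a single summand since $\mathrm{inv}_v$ restricts to an isomorphism $\Br(F_v)[p^r]\xrightarrow{\simeq}(\Q/\Z)[p^r]$. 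A diagram chase (the outer vertical maps being the identity and an isomorphism, the middle one an isomorphism) then transfers exactness to the top row, which is the short exact sequence of~(2).

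The step I expect to require the most care is the commutativity of the left-hand square of this ladder: that under the isomorphisms $f_{\ast}^{KH}$ the localization maps $KH_0^{(1)}(X,\Z/p^r)\to KH_0^{(1)}(X_v,\Z/p^r)$ correspond to the restriction maps $\Br(F)[p^r]\to\Br(F_v)[p^r]$. This is the naturality of $f_{\ast}^{KH}$ with respect to the base change $\Spec(F_v)\to\Spec(F)$, which I would derive from the functoriality of the Kato complex and of the complex \eqref{eq:RecCpx} (cf.~\cite[Sect.~2]{JS09}) together with the compatibility of the extended differential symbol \eqref{eq:H24} with restriction of fields; this should be routine but bookkeeping-heavy. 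Everything else is assembly of results already established in the paper.
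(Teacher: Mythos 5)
Your overall route is the same as the paper's: part (1) is obtained by feeding $(\mathrm{Pt})$ and $(\mathrm{Van}_1)$ into \autoref{lem:isom} (with $s=1$ since $[F:F^p]=p$, and $(\mathrm{Van}_1)$ coming from $\underline{A}^{d+1}(X,1)=\underline{V}(X)$ together with \autoref{lem:GaVX} and \autoref{lem:GaVXprod}), and part (2) is transported through $f_{\ast}^{KH}$ and \eqref{eq:HBr} to the Hasse--Brauer--Noether sequence, exactly as in the paper. Your extra care about the applicability of \autoref{thm:local}(1) to each $X_v$ and about the commutativity of the localization ladder is reasonable (the paper leaves both implicit).

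There is, however, one step that fails as written: your derivation of $(\mathrm{Pt})$ for $X$ from $(\mathrm{Pt})$ for the factors. The claim that $[F_1'\cdots F_{j+1}':F_1'\cdots F_j']$ divides $[F_{j+1}':F]$ is false for non-Galois extensions, and, worse, the compositum of extensions of degree prime to $p$ need not have degree prime to $p$: if $N/F$ is Galois with group $S_3$ and $p=2$, the two non-conjugate (as fields, conjugate) cubic subfields $F_1'=N^{\langle(12)\rangle}$ and $F_2'=N^{\langle(13)\rangle}$ each have degree $3$, but their compositum is $N$, of degree $6$. So the compositum argument does not give $(\mathrm{Pt})$ for $X$. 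The correct (and easy) fix is to work with the \'etale algebra rather than the compositum: $F_1'\otimes_F\cdots\otimes_F F_d'$ is a product of finite separable extensions $L_j/F$ with $\sum_j[L_j:F]=\prod_i[F_i':F]$ prime to $p$, so at least one $L_j$ has degree prime to $p$; this $L_j$ admits $F$-embeddings of every $F_i'$, whence $X_i(L_j)\supseteq X_i(F_i')\neq\emptyset$ for all $i$ and $X(L_j)\neq\emptyset$. (The paper asserts the implication without proof, so this is a repair rather than a divergence from its argument.) With that substitution your proof is complete and agrees with the paper's.
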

\begin{proof}
(1) 
    As we assume the condition (Pt) for each $X_i$, 
    the condition (Pt) for $X$ holds.  
    Since we have
	$\underline{A}^{d+1}(X,1) = \underline{V}(X)$,
    \autoref{lem:GaVX} and \autoref{lem:GaVXprod} 
    imply $(\Ga\otimesM \underline{A}^{d+1}(X,1))(F)/\wp  = 0$.
    
\noindent
\smallskip
(2)
By (1) and \autoref{thm:local}, we have isomorphisms 
\begin{align*}
  f_{\ast}^{KH}\colon  KH_0^{(1)}(X,\Z/p^r\Z) &\xrightarrow{\simeq} KH_0^{(1)}(\Spec(F),\Z/p^r\Z) \simeq \Br(F)[p^r],\quad \mbox{and}\\
f_{\ast}^{KH}\colon KH_0^{(1)}(X_v,\Z/p^r\Z) &\xrightarrow{\simeq }KH_0^{(1)}(\Spec(F_v),\Z/p^r\Z) \simeq \Br(F_v)[p^r]. 
\end{align*}
The short exact sequence (the Hasse-Brauer-Noether theorem) 
\[
0 \to \Br(F)[p^r] \to \bigoplus_v \Br(F_v)[p^r] \to \Z/p^r \to 0
\]
gives the required exact sequecne.
\end{proof}

\begin{rem}\label{rem:index}
\begin{enumerate}
    \item 
   The classical theorem due to F.~K.~Schmidt asserts that the existence of a degree 1 cycle on a curve $X_k$ 
   over a finite field $k$. 
   For a global field $F$ over $k$, and the constant field extension $X = X_k\otimes_k F$, 
   the condition (Pt) holds.

\item 
    For a projective smooth and geometrically irreducible curve $X$ over $F$, 
    the degree of the canonical divisor is $2g-2$, where $g$ is the arithmetic genus of $X$. 
    This implies that the greatest common divisor 
    $\mathrm{gcd}\set{[F(x):F] | x \in X_0}$ 
    divides $2g-2$. 
    For example, if $p\ge 3$ and $p\nmid (g-1)$, then 
    the condition (Pt) holds. 
    On the other hand, Artin-Schreier curves over $F$ do not satisfy the condition (Pt).
\end{enumerate}
\end{rem}


\section{Reciprocity sheaves}
\label{sec:RS}
In this section we assume that the base field $k$ is perfect.
By a series of works of Kahn-Miyazaki-Saito-Yamazaki \cite{KSY16, KMSY1, KMSY2, KSY22} to generalize the theory of $\bA^1$-invariant sheaves to the theory of cube invariant sheaves and reciprocity sheaves, the category $\Cork$ of finite correspondences over $k$ is generalized to the category of \emph{proper modulus pairs} over $k$.
Koizumi-Miyazaki \cite{KM24} furthermore extended it to \emph{proper $\Q$-modulus pairs} and gave a motivic construction of de Rham-Witt complex.

We first recall the works of Kahn-Miyazaki-Saito-Yamazaki/Koizumi-Miyazaki, and we consider expressing the theorems (\autoref{thm:finite}, \autoref{thm:local}, \autoref{thm:global}) in terms of reciprocity sheaves.

\subsection*{\texorpdfstring{$\Q$}{Q}-Modulus pairs and modulus presheaves with transfers}
\begin{dfn}[{\cite[Sect.~1.1]{KMSY2}, \cite[Def.~2.1]{KM24}}]
A \textbf{proper $\Z$-modulus} (resp.~\textbf{$\Q$-modulus}) \textbf{pair}
is a pair $\sX=(X, D_X)$ of a proper separated scheme $X$ of finite type over $k$ and 
an effective Cartier 
(resp.~a $\Q$-effective $\Q$-Cartier) divisor $D_X$ on $X$ such that $X^\circ:=X\smallsetminus |D_X|$ is smooth over $k$.
\end{dfn}

Fix $\Lambda\in \set{\Z, \Q}$.
We define $\MCor_k^\Lambda$ to be the category of proper $\Lambda$-modulus pairs.
The canonical functor 
$\MCor_k^{\Z} \to \MCor_k^{\Q}$ is fully faithful (\cite[Cor.~1.3]{KM24}).
There is a natural functor
\begin{align*}
    \omega:&\MCor_k^\Lambda\to \Cork,\quad \sX\mapsto X^\circ.
\end{align*}
The category 
$\MCor_k^\Lambda$ has a symmetric monoidal structure given by
\[
\mathscr{X}\otimes \mathscr{Y}=(X\times Y, \mathrm{pr}_1^*D_X +\mathrm{pr}_2^*D_Y),
\]
and the functor $\omega$ is symmetric monoidal (cf.~\cite[Prop.~2.1.6]{KMSY3} for $\Lambda=\Z$ case).

Let $\PSh(\MCor_k^\Lambda)$ (resp.~$\PSh(\Cork)$) denote the category of additive presheaves on $\MCor_k^\Lambda$ (resp.~$\Cork$).
A presheaf $\cF\in \PSh(\MCor_k^\Lambda)$ is called a \textbf{$\Lambda$-modulus presheaf with transfers}.
By the Yoneda embedding $\Z_\mathrm{tr}: \uMCor_k^\Lambda\to \PSh(\MCor_k^\Lambda)$ and colimits, the monoidal structure on $\MCor_k^\Lambda$ is extended to a symmetric monoidal structure on $\PSh(\MCor_k^\Lambda)$.
The functor $\omega$ induces an adjunction
\begin{align*}
    \omega_!:& \PSh(\uMCor_k^\Lambda)\rightleftarrows \PSh(\Cork):\omega^*.
\end{align*}

\subsection*{Cube-invariance and reciprocity}
The object $\bcube:=(\bP^1, [\infty])\in \MCor_k^\Lambda$ is called the ``cube'' over $k$, which plays a fundamental role. 

\begin{dfn}\label{dfn:h0cube}
A presheaf $\cF\in \PSh(\MCor_k^\Lambda)$ is said to be \textbf{$\bcube$-invariant} (or \textbf{cube invariant}) if 
\[
\mathrm{pr}_1^*: \cF(\mathscr{X})\to\cF(\mathscr{X}\otimes \bcube)
\]
is an isomorphism for all $\mathscr{X}\in \MCor_k^\Lambda$.
\end{dfn}

For $\epsilon=0, 1$, let $i_\epsilon:(\Spec(k),\emptyset)\to \bcube$ be the morphism of proper modulus pairs given by the morphism of schemes $\epsilon: \Spec(k)\to \bP^1.$
Similar to the 0-th Suslin homology for $\PSh(\Cork)$, we define the \emph{cube-localization} and \emph{cube-invariant part} as follows.
\begin{dfn}
 Let $\cF\in \PSh(\MCork^\Lambda)$.
We define the \textbf{cube-localization of $\cF$} as
\[
h_0^\bcube(\cF)(\mathscr{X})=\Coker(\cF(\mathscr{X}\otimes\bcube)\xrightarrow{i_0^*-i_1^*}\cF(\mathscr{X})),
\]
and the \textbf{cube-invariant part of $\cF$} as 
\[
h^{0,\bcube}(\cF)(\mathscr{X})=\Hom(h_0^\bcube(\mathscr{X}),\cF).
\]
\end{dfn}
\begin{rem}
For $\cF\in \PSh(\MCor_k^\Lambda)$, $h_0^\bcube(\cF)$ is the maximal $\bcube$-invariant quotient of $\cF$ and there is a canonical surjection $\cF\twoheadrightarrow h_0^\bcube(\cF)$.
$h^{0,\bcube}(\cF)$ is the maximal cube-invariant subobject of $\cF$.
The functor $h^{0,\bcube}$ is right adjoint to $h_0^\bcube.$
\end{rem}

We write $h_0, h^0$ for the composite of functors respectively
\begin{align}
\label{eq:h0}
h_0& :\PSh(\MCor_k^\Lambda)\xrightarrow{h_0^\bcube}\PSh(\MCor_k^\Lambda)\xrightarrow{\omega_!} \PSh(\Cork),\\
h^0& :\PSh(\Cork)\xrightarrow{\omega^*}\PSh(\MCor_k^\Lambda) \xrightarrow{h^{0,\bcube}}\PSh(\MCor_k^\Lambda).
\end{align}

For $\sX\in \MCor_k^\Lambda$, we write $h_0^{\bullet}(\sX):=h_0^{\bullet}(\Z_\mathrm{tr}(\sX))$ for $\bullet \in \set{\bcube, \emptyset}$. 

\begin{dfn}\label{dfn:modulus}
Let $\cF\in \PSh(\Cork)$.
Let $U$ be a smooth scheme of finite type over $k$ and $a\in \cF(U)$.
Take a proper $\Lambda$-modulus pair $\sX=(X,D_X)$ with $X^\circ\simeq U.$
We say that \textbf{$a$ has a $\Lambda$-modulus} $\sX$ (or $\sX$ is a \textbf{$\Lambda$-modulus for $a$}) if the morphism $a\colon \Z_\mathrm{tr}(U)\to \cF$ factors through $\Z_\mathrm{tr}(U)\to h_0(\sX)$. 
\end{dfn}

\begin{lem}[\text{\cite[Lem.~3.20]{KM24}}]
 Let notation be as in \autoref{dfn:modulus}.
 A section $a\in \cF(U)$ has a $\Z$-modulus if and only if it has a $\Q$-modulus.
\end{lem}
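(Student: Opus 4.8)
The plan is to recast ``$a$ has a $\Lambda$-modulus $\sX$'' as membership of $a$ in an explicit subgroup of $\cF(U)$ and then to compare these subgroups for $\Lambda=\Z$ and $\Lambda=\Q$. Write $\iota\colon\MCor_k^\Z\to\MCor_k^\Q$ for the canonical functor; by \cite[Cor.~1.3]{KM24} it is fully faithful, and it is moreover monoidal, sends the cube $\bcube$ to the cube $\bcube$, and satisfies $\omega\circ\iota=\omega$. For a proper $\Lambda$-modulus pair $\sX$ with $X^\circ\simeq U$, combining the adjunctions $\omega_!\dashv\omega^*$ and $h_0^\bcube\dashv h^{0,\bcube}$ with the Yoneda lemma gives a canonical bijection
\[
\Hom_{\PSh(\Cork)}\bigl(h_0(\sX),\cF\bigr)\;\simeq\;\bigl(h^{0,\bcube}(\omega^*\cF)\bigr)(\sX),
\]
under which composition with the canonical map $\Z_{\mathrm{tr}}(U)=\omega_!\Z_{\mathrm{tr}}(\sX)\to h_0(\sX)$ corresponds to the inclusion $\bigl(h^{0,\bcube}(\omega^*\cF)\bigr)(\sX)\hookrightarrow(\omega^*\cF)(\sX)=\cF(U)$ (recall that $h^{0,\bcube}(\omega^*\cF)$ is a sub-presheaf of $\omega^*\cF$). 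Hence $a$ has the $\Lambda$-modulus $\sX$ if and only if, inside $\cF(U)$, one has $a\in\bigl(h^{0,\bcube}(\omega^*\cF)\bigr)(\sX)$, and the lemma becomes the statement that $\bigcup_{\sX}\bigl(h^{0,\bcube}(\omega^*\cF)\bigr)(\sX)$ is the same subgroup of $\cF(U)$ whether $\sX$ runs over $\Z$-modulus pairs with $X^\circ\simeq U$ or over $\Q$-modulus pairs.

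Next I would make two reductions. If $\sX=(X,D_X)$ is a $\Q$-modulus pair with $X^\circ\simeq U$, choose $n\ge 1$ so that $nD_X$ is an effective Cartier divisor; then $(X,nD_X)$ is a $\Z$-modulus pair with the same open part, and since $nD_X\ge D_X$ the identity of $X$ defines a morphism $(X,nD_X)\to(X,D_X)$ in $\MCor_k^\Q$ lying over $\mathrm{id}_U$ (the modulus condition for the diagonal correspondence being just $nD_X\ge D_X$). Contravariance of $h^{0,\bcube}(\omega^*\cF)$ then gives $\bigl(h^{0,\bcube}(\omega^*\cF)\bigr)(X,D_X)\subseteq\bigl(h^{0,\bcube}(\omega^*\cF)\bigr)(X,nD_X)$ inside $\cF(U)$, so every $\Q$-modulus is contained, as a subgroup of $\cF(U)$, in one of the form $\iota\sX$ with $\sX$ a $\Z$-modulus pair. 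The lemma thus reduces to proving, for every $\Z$-modulus pair $\sX$ with $X^\circ\simeq U$, the equality of subgroups of $\cF(U)$
\[
\bigl(h^{0,\bcube}(\omega^*\cF)\bigr)(\iota\sX)\;=\;\bigl(h^{0,\bcube}(\omega^*\cF)\bigr)(\sX),
\]
the left-hand side formed in the $\Q$-theory and the right-hand side in the $\Z$-theory. The inclusion of the left-hand side into the right-hand side is immediate: since $\iota$ is fully faithful, monoidal and compatible with $\bcube$ and $\omega$, the cube-homotopies tested over $\Z$-modulus pairs form a subfamily of those tested over $\Q$-modulus pairs, so a section passing the $\Q$-tests passes the $\Z$-tests.

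The reverse inclusion is the heart of the matter. Unwinding $h^{0,\bcube}$, a section $x\in\cF(U)=(\omega^*\cF)(\sX)$ lies in $\bigl(h^{0,\bcube}(\omega^*\cF)\bigr)(\iota\sX)$ exactly when, for every $\Q$-modulus pair $\mathscr{Y}=(Y,D_Y)$ and every $\gamma\in\MCor_k^\Q(\mathscr{Y}\otimes\bcube,\iota\sX)$, the specializations $i_0^*\gamma$ and $i_1^*\gamma$ induce the same pullback of $x$ in $\cF(Y^\circ)$, whereas membership in $\bigl(h^{0,\bcube}(\omega^*\cF)\bigr)(\sX)$ only tests $\Z$-modulus pairs $\mathscr{Y}$. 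Given such $\mathscr{Y}$ and $\gamma$, I would clear denominators: choose $m\ge 1$ with $mD_Y$ an integral Cartier divisor, so that $\mathscr{Y}_m:=(Y,mD_Y)$ is a $\Z$-modulus pair with open part $Y^\circ$, and $mD_Y\ge D_Y$ furnishes a morphism $j\colon\iota\mathscr{Y}_m\to\mathscr{Y}$ in $\MCor_k^\Q$ lying over $\mathrm{id}_{Y^\circ}$. Composing $\gamma$ with $j\otimes\mathrm{id}_\bcube\colon\iota(\mathscr{Y}_m\otimes\bcube)=\iota\mathscr{Y}_m\otimes\bcube\to\mathscr{Y}\otimes\bcube$ and using full faithfulness of $\iota$ yields $\gamma_m\in\MCor_k^\Z(\mathscr{Y}_m\otimes\bcube,\sX)$; because $j$ lies over $\mathrm{id}_{Y^\circ}$ and $\omega\circ\iota=\omega$, the specializations $i_\epsilon^*\gamma_m$ and $i_\epsilon^*\gamma$ have the same image under $\omega$, hence induce the same pullback on $x$ for $\epsilon=0,1$. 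The $\Z$-test for $(\mathscr{Y}_m,\gamma_m)$ therefore forces the two pullbacks for $(\mathscr{Y},\gamma)$ to agree; as $(\mathscr{Y},\gamma)$ is arbitrary this establishes the reverse inclusion, hence the displayed equality, and together with the two reductions it proves the lemma in both directions.

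The only genuinely non-formal step is the last one: that passing from $\Z$- to $\Q$-modulus test pairs does not enlarge the cube-homotopy relation on sections over $U$. This rests on the cofinality of integral modulus pairs inside $\Q$-modulus pairs supplied by clearing denominators, together with the compatibility of $\iota$ with $\omega$, the cube, and the tensor product. Everything else is adjunction bookkeeping and the elementary remark that enlarging a modulus divisor can only enlarge the subgroup of $\cF(U)$ it cuts out.
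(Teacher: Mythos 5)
The paper does not actually prove this statement: it is imported verbatim from \cite[Lem.~3.20]{KM24}, so there is no internal argument to measure yours against. Your reconstruction is correct and is the expected one. The adjunction reformulation is sound --- since $\Z_{\mathrm{tr}}(U)\to h_0(\sX)$ is an epimorphism (being $\omega_!$ of the cokernel projection $\Z_{\mathrm{tr}}(\sX)\twoheadrightarrow h_0^\bcube(\sX)$), having the modulus $\sX$ is exactly membership in the subgroup $\bigl(h^{0,\bcube}(\omega^*\cF)\bigr)(\sX)\subseteq\cF(U)$ --- and both directions then reduce to clearing denominators: every $\Q$-modulus pair $(X,D_X)$ is refined over $\mathrm{id}_U$ by the integral pair $(X,nD_X)$, and every $\Q$-test pair $(\mathscr{Y},\gamma)$ against $\iota\sX$ is dominated over $\mathrm{id}_{Y^\circ}$ by an integral test pair $(\mathscr{Y}_m,\gamma_m)$ with the same specializations after applying $\omega$. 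The points you leave implicit are routine but worth recording: that the diagonal is an admissible, left-proper correspondence $(X,nD_X)\to(X,D_X)$ (this is precisely the inequality $nD_X\ge D_X$ pulled back to the normalization of its closure, which is $X$ itself), and that the fully faithful functor $\MCor_k^\Z\to\MCor_k^\Q$ is monoidal, fixes $\bcube$, and commutes with $\omega$ --- all immediate since it is the identity on underlying data. With those observations made explicit, the argument is complete and faithfully reproduces the content of the cited lemma.
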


By this lemma, having a modulus does not depend on $\Lambda \in \{\Z,\Q\}$.

\begin{dfn}
Let $\cF\in \PSh(\Cork)$.
We say that $\cF$ has \textbf{reciprocity} (or $\cF$ is a \textbf{reciprocity presheaf}) if every section of $\cF$ has a $\Lambda$-modulus.
We define $\RSC_k$ to be the full subcategory of $\PSh(\Cork)$ consisting of reciprocity presheaves.
\end{dfn}

A relation between $\bcube$-invariance and reciprocity is known as follows.
\begin{lem}[\text{\cite[Lem.~3.21]{KM24}}]\label{lem:cube-rec}
   If $\cF\in \PSh(\MCor_k^\Lambda)$ is $\bcube$-invariant then $\omega_!\cF\in \RSC_k$
\end{lem}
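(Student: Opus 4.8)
\noindent
The plan is to verify the defining property of $\RSC_k$ directly: every section of $\omega_!\cF$ admits a $\Lambda$-modulus in the sense of \autoref{dfn:modulus}. The key input is the standard presentation of $\omega_!$ as a filtered colimit: for $U\in\Sm_k$ one has
\[
(\omega_!\cF)(U)\ \cong\ \varinjlim_{\sX}\ \cF(\sX),
\]
where $\sX=(X,D_X)$ runs over the category of proper $\Lambda$-modulus pairs equipped with an isomorphism $X^\circ\simeq U$ and ambient transition morphisms inducing $\mathrm{id}_U$. This category is nonempty by Nagata compactification, and it is filtered because any two compactifications of $U$ are dominated by a common one (for instance the closure of the diagonal image of $U$ in the product) and divisors may always be enlarged; on a representable $\Z_{\mathrm{tr}}(\mathscr{Y})$ the displayed formula amounts to the statement that a finite correspondence into $Y^\circ$ satisfies the modulus (admissibility) condition once the divisor on the source is taken sufficiently large, and the general case follows since $\cF$ is a colimit of representables and $\omega_!$ commutes with colimits. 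I will take this presentation from the foundational theory of \cite{KMSY1, KMSY2} (and \cite{KM24} for $\Lambda=\Q$).

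\noindent
Granting this, fix a section $a\in(\omega_!\cF)(U)$ and, using the colimit presentation, choose a proper $\Lambda$-modulus pair $\sX$ with $X^\circ\simeq U$ together with $\widetilde a\in\cF(\sX)$ representing $a$; by the Yoneda lemma $\widetilde a$ is the same datum as a morphism $\widetilde a\colon\Z_{\mathrm{tr}}(\sX)\to\cF$ in $\PSh(\MCor_k^\Lambda)$. Since $\cF$ is $\bcube$-invariant we have $h^{0,\bcube}(\cF)\cong\cF$, so by the $(h_0^\bcube,h^{0,\bcube})$-adjunction the morphism $\widetilde a$ factors through the canonical surjection $q\colon\Z_{\mathrm{tr}}(\sX)\twoheadrightarrow h_0^\bcube(\sX)$, say $\widetilde a=g\circ q$ with $g\colon h_0^\bcube(\sX)\to\cF$.

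\noindent
Now apply $\omega_!$. Since $\omega_!$ preserves colimits and $\omega_!\Z_{\mathrm{tr}}(\sX)=\Z_{\mathrm{tr}}(X^\circ)=\Z_{\mathrm{tr}}(U)$ while $\omega_!h_0^\bcube(\sX)=h_0(\sX)$ by \eqref{eq:h0}, the morphism $\omega_!q$ is exactly the canonical map $\Z_{\mathrm{tr}}(U)\to h_0(\sX)$ appearing in \autoref{dfn:modulus}, and $\omega_!\widetilde a=(\omega_!g)\circ(\omega_!q)$ corresponds under Yoneda to $a$, because $a$ is by construction the image of $\widetilde a$ under the colimit map $\cF(\sX)\to(\omega_!\cF)(U)$. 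Thus $\sX$ is a $\Lambda$-modulus for $a$. As $U$ and $a$ were arbitrary, every section of $\omega_!\cF$ has a modulus, i.e.\ $\omega_!\cF\in\RSC_k$.

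\noindent
I expect the one genuinely substantial point to be the filtered-colimit presentation of $\omega_!$ — concretely, that a section of $\omega_!\cF$ over $U$ is represented on a single proper modulus pair $\sX$ with $X^\circ\simeq U$. This relies on Nagata compactification and on the ability to satisfy the modulus condition of a finite correspondence by enlarging the ambient divisor, both of which are part of the construction of $\omega_!$ in \cite{KMSY1, KMSY2, KM24}. Once that presentation is available, the rest is formal: $\bcube$-invariance gives the factorization through $h_0^\bcube(\sX)$, and applying the colimit-preserving functor $\omega_!$ transports it to the factorization through $h_0(\sX)$ demanded by the definition of having a modulus.
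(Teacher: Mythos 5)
Your argument is correct. Note that the paper itself gives no proof of this lemma --- it is quoted verbatim from \cite[Lem.~3.21]{KM24} --- so there is nothing internal to compare against; your reconstruction is the standard argument one finds in that source and in the earlier reciprocity-sheaf literature: represent a section of $\omega_!\cF$ on a single proper modulus pair via the filtered-colimit presentation of $\omega_!$, use $\bcube$-invariance (equivalently $h^{0,\bcube}(\cF)\cong\cF$ and the $(h_0^\bcube,h^{0,\bcube})$-adjunction) to factor the corresponding morphism through $h_0^\bcube(\sX)$, and apply the colimit-preserving $\omega_!$ to land in the factorization through $h_0(\sX)$ required by \autoref{dfn:modulus}. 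You correctly isolate the only substantive input --- that a section of $(\omega_!\cF)(U)$ is represented on one compactification-with-divisor, which rests on Nagata compactification, the admissibility-by-enlarging-the-divisor lemma, and cofinality/filteredness of that subcategory of the comma category --- and you appropriately attribute it to \cite{KMSY1, KMSY2, KM24} rather than reproving it.
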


Recall that $\cF\in \PSh(\Cork)$ is said to be $\bA^1$-invariant if
\[
\mathrm{pr}_1^*: \cF(X)\to\cF(X\times \bA^1)
\]
is an isomorphism for all smooth $X$.
Let $\HI_k$ be the full subcategory of $\PSh(\Cork)$ consisting of $\bA^1$-invariant presheaves.

\begin{thm}[\text{\cite{KSY16, KSY22}}]
The category $\RSC_k$ contains the following objects.
\begin{enumerate}[label=$(\mathrm{\alph*})$]
\item
$\bA^1$-invariant presheaves.
\item
Any smooth commutative algebraic group $($regarded as an object of $\PSh(\Cork))$.
\item
De {\red Rham}-Witt sheaf $W_r\Omega^n$. \linelabel{com:23}
\end{enumerate}
\end{thm}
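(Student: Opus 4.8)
The plan is to handle the three classes of sheaves by three standard, but genuinely different, routes: for $\bA^1$-invariant presheaves a purely formal argument using the adjunction $\omega_!\dashv\omega^*$; for smooth commutative algebraic groups and for the de Rham--Witt sheaves a reduction to the case of smooth curves, where one invokes respectively the Rosenlicht--Serre theory of local symbols and the explicit pole-order filtration on $W_r\Omega^n$.

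For (a), I would first check that $\omega^*\cF$ is $\bcube$-invariant whenever $\cF\in\HI_k$: since the interior of $\bcube=(\bP^1,[\infty])$ is $\bA^1$, one has $(\omega^*\cF)(\sX\otimes\bcube)=\cF(X^\circ\times\bA^1)$, and the structure map $\mathrm{pr}_1^*$ is an isomorphism by $\bA^1$-invariance (so $\omega_!\omega^*\cF\in\RSC_k$ already by \autoref{lem:cube-rec}). To conclude $\cF\in\RSC_k$ itself, fix a smooth $U$ of finite type over $k$ and $a\in\cF(U)$, and choose a proper $\Z$-modulus pair $\sX=(X,D_X)$ with $X^\circ\simeq U$. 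By adjunction $a$ corresponds to a morphism $\Z_\mathrm{tr}(\sX)\to\omega^*\cF$, which, the target being $\bcube$-invariant, factors through the maximal $\bcube$-invariant quotient $\Z_\mathrm{tr}(\sX)\twoheadrightarrow h_0^\bcube(\Z_\mathrm{tr}(\sX))$. Applying $\omega_!$ (which sends $\Z_\mathrm{tr}(\sX)$ to $\Z_\mathrm{tr}(U)$) and using the triangle identity of the adjunction, $a$ factors through the canonical surjection $\Z_\mathrm{tr}(U)\twoheadrightarrow\omega_!h_0^\bcube(\Z_\mathrm{tr}(\sX))=h_0(\sX)$; hence $\sX$ is a $\Z$-modulus for $a$, as required by \autoref{dfn:modulus}.

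For (b), the abelian-variety quotient is covered by (a), so the real point is the affine part of a smooth commutative algebraic group $G$ --- tori, and, in characteristic $p$, unipotent groups, in particular the Witt groups $W_r$. Here I would use that membership in $\RSC_k$ can be tested curve-wise: it suffices that, for every smooth curve $C$ with smooth compactification $\overline C$ and every $k$-morphism $f\colon C\to G$, there is an effective divisor supported on $\overline C\smallsetminus C$ that is a modulus for $f$ in the sense of \autoref{dfn:modulus}. This is exactly the classical theory of the modulus (conductor) of a rational map from a curve into a commutative algebraic group due to Rosenlicht and Serre: the conductor of the associated character or isogeny furnishes the divisor, and one only has to translate ``Rosenlicht modulus'' into ``modulus pair''.

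For (c) I would again reduce to curves, replacing the Rosenlicht modulus by the canonical pole-order (log-pole) filtration on $W_r\Omega^n$ of \cite{Kat82c}: over a smooth curve this filtration is exhaustive and is compatible both with $\dlog$-classes of units and with Witt-vector coefficients, so bounding the pole order at the boundary points yields an explicit effective divisor which serves as a modulus, after which one checks compatibility with the transfer structure; this is carried out uniformly in $r$ and $n$ in \cite{KSY22} (and reproves the reciprocity of $\Omega^n$ when $r=1$). I expect (a) and the torus/abelian part of (b) to be essentially soft; the genuine obstacle is (c) --- controlling the ramification of $\dlog$ symbols and of Artin--Schreier--Witt coefficients simultaneously along the boundary of a curve, and verifying that the resulting modulus is stable under the transfer structure, which is the technical core of \cite{KSY22}.
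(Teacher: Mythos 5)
This theorem is stated in the paper purely as a recollection from \cite{KSY16, KSY22}; the paper supplies no proof of its own, so there is nothing internal to compare your argument against. Judged on its own terms, your sketch is a faithful outline of how the result is actually established in the cited sources. Part (a) is complete as written: $\omega^{*}\cF$ is $\bcube$-invariant because the interior of $\sX\otimes\bcube$ is $X^{\circ}\times\bA^{1}$, and the adjunction $\omega_{!}\dashv\omega^{*}$ together with the triangle identity shows that any chosen compactification $(X,D_X)$ of $U$ is already a modulus for every section, exactly as required by \autoref{dfn:modulus} (and $\omega_{!}$, being a left adjoint, preserves the surjection onto $h_0^{\bcube}$). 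For (b) and (c) your reduction to curves and the appeal to Rosenlicht--Serre local symbols, respectively to Kato's pole-order filtration on $W_r\Omega^{n}$, match the strategy of \cite{KSY16} (including R\"ulling's appendix for $\Omega^{n}$) and \cite{KSY22}.

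The one point you should flag more carefully is the assertion in (b) that ``membership in $\RSC_k$ can be tested curve-wise.'' With the definition adopted in this paper (factoring $a\colon\Ztr(U)\to\cF$ through $h_0(\sX)$ for a proper modulus pair), this curve-wise criterion is not a tautology: it is the comparison between the original Rosenlicht--Serre/Weil-reciprocity-with-modulus formulation of \cite{KSY16} and the $h_0$-of-modulus-pairs formulation (cf.\ \cite[Thm.~4.9]{RSY22}), and that equivalence is itself a substantive theorem. Your dévissage of a smooth commutative group into abelian, toric and unipotent pieces also tacitly uses that $\RSC_k$ is closed under extensions, which again is proved, not formal. Neither issue is a gap in the sense of an incorrect step --- both are genuine results of the cited papers --- but in a self-contained write-up they would need to be invoked explicitly rather than treated as definitions.
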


Put $\Z:=\Ztr(\Spec(k),\emptyset)$.
The objects $\widehat{\bW}_n^+, \Gm^+$ in $\PSh(\MCor_k^\Q)$ are defined as follows
\[
\widehat{\bW}_n^+:=\varinjlim_{\epsilon>0}\Z_\mathrm{tr}(\bP^1, \epsilon[0]+(n+\epsilon)[\infty]), \quad 
\Gm^+:=\varinjlim_{\epsilon>0}\Coker(i_1:\Z\to\Z_\mathrm{tr}(\bP^1, \epsilon[0]+\epsilon[\infty]))
\]
and $W_r^+$ is defined to be the quotient of $\widehat{\bW}_{p^{r-1}}^+\otimes \Z_{(p)}$ by the image of the idempotents $\ell^{-1}V_\ell F_\ell$ for all prime $\ell\neq p$, where $V_\ell$ and $F_\ell$ are the Verschiebung
 and the Frobenius for $\widehat{\bW}_n^+$.
By identifying the group schemes  $\Gm$ and $W_r$ with the objects of $\PSh(\Cork)$ represented by $\Gm$ and $W_r$ respectively, we have the following theorem.

\begin{thm}[{\cite[Cor.~5.12, Rmk.~6.4, Thm.~6.21]{KM24}}]\label{thm:KM}
   \begin{enumerate}
 \item
   There is an isomorphism
   \[
     h_0(\Gm^+)\simeq \Gm
   \]
    in $\PSh(\Cork)$.
\item
   Suppose that $\mathrm{ch}(k)=p>0$. For any $r\geq 1$, there is an isomorphism
   \[
   h_0(W_r^+)\simeq W_r
   \]
    in $\PSh(\Corkaff)$, 
    where $\Corkaff$ is the full subcategory of $\Cork$ consisting of affine schemes. 
\item
   Suppose that $\mathrm{ch}(k)=p\geq 3$.
   There is an isomorphism
   \[
   a_\mathrm{Nis}h_0(W_r^+\otimes (\Gm^+)^{\otimes n})
   \simeq
   W_r\Omega^n
   \]
    in the category $\Sh_\Nis(\Cork)$ of Nisnevich sheaves on $\Cork$,
    where $a_\mathrm{Nis}$ is the Nisnevich sheafification.
\end{enumerate}
\end{thm}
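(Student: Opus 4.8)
Since \autoref{thm:KM} assembles three results of \cite{KM24}, the plan is to explain why the ``plus'' objects are engineered so that applying $h_0$ recovers $\Gm$, $W_r$, and the de Rham--Witt sheaf $W_r\Omega^n$, and to isolate where the real work lies.

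For part (1), the first step is to produce the comparison map. The coordinate $t$ on $\Gm=\bP^1\smallsetminus\{0,\infty\}$ is a section of $\omega^*\Gm$ over $(\bP^1,\epsilon[0]+\epsilon[\infty])$ for every $\epsilon>0$, hence by the Yoneda lemma defines a morphism $\Z_\mathrm{tr}(\bP^1,\epsilon[0]+\epsilon[\infty])\to\omega^*\Gm$; since it sends the section coming from $i_1$ to $t|_{t=1}=1$, the unit of $\Gm$, it descends to $\Gm^+\to\omega^*\Gm$. Because units on $X^\circ\times\bA^1$ coincide with units on $X^\circ$ for smooth $X^\circ$, the presheaf $\omega^*\Gm$ is $\bcube$-invariant, so this map factors through $h_0^{\bcube}(\Gm^+)\to\omega^*\Gm$, and applying $\omega_!$ (using $\omega_!\omega^*=\mathrm{id}$) yields a canonical map $h_0(\Gm^+)\to\Gm$ in $\PSh(\Cork)$. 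The remaining and essential step is to prove it is an isomorphism: using that a finite correspondence from $(\Spec k,\emptyset)$ into a proper modulus pair is merely a zero-cycle on its interior, one would identify $h_0(\Gm^+)$ on a smooth $U$ with zero-cycles on $U\times\Gm$ admissible for some modulus, modulo cube-equivalence, and then run a moving/specialization argument --- the analogue with modulus of the $\bA^1$-invariant computation of Suslin homology --- to show that $\sum n_i[a_i]\mapsto\prod a_i^{n_i}$ furnishes the inverse.

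For part (2), I would run the same scheme with $\widehat{\bW}_{p^{r-1}}^+=\varinjlim_\epsilon\Z_\mathrm{tr}(\bP^1,\epsilon[0]+(p^{r-1}+\epsilon)[\infty])$: its cube-localization is computed by a generalized Artin--Schreier--Witt calculation (R\"ulling's de Rham--Witt computations, as exploited in \cite{KM24}), and dividing by the prime-to-$p$ idempotents $\ell^{-1}V_\ell F_\ell$ extracts the $p$-typical part, yielding $W_r$. The target is restricted to $\Corkaff$ because the Witt-vector comparison and the manipulation of the operators $V_\ell,F_\ell$ are carried out over affine schemes.

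For part (3) --- the technical heart --- the plan is to combine (1) and (2) through the symmetric monoidal structure on $\PSh(\MCor_k^\Q)$. Using the $\dlog$ maps $\Gm\to W_r\Omega^1$ and the product on the de Rham--Witt complex one builds a natural transformation $h_0(W_r^+\otimes(\Gm^+)^{\otimes n})\to W_r\Omega^n$; after Nisnevich sheafification it is surjective because $W_r\Omega^n$ is generated locally by symbols $\mathbf{a}\,\dlog[b_1]\cdots\dlog[b_n]$ (cf.\ \autoref{thm:Kato}), while injectivity amounts to checking that every defining relation of $W_r\Omega^n$ --- Leibniz, $[a]\,\dlog[a]=0$, and the Verschiebung/Frobenius identities --- is forced by cube-equivalence on products of copies of $\bP^1$. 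I expect this last point to be the main obstacle: it is exactly the motivic reconstruction of the de Rham--Witt complex of \cite{KM24}, and it is also the origin of the hypothesis $p\ge 3$, since the relations degenerate at $p=2$. Parts (1) and (2), by contrast, reduce --- once the evident comparison maps are in hand --- to zero-cycle computations modeled on the $\bA^1$-invariant theory together with bookkeeping of Witt-vector operations.
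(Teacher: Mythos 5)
This statement is not proved in the paper at all: it is quoted verbatim from \cite{KM24} (Cor.~5.12, Rmk.~6.4, Thm.~6.21) and used as a black box, so there is no internal proof to compare your attempt against. Judged on its own terms, your outline is a fair reconstruction of the strategy of the cited reference and contains no incorrect steps: the comparison map in (1) is indeed induced by the tautological unit $t$ on the interior of $(\bP^1,\epsilon[0]+\epsilon[\infty])$, which kills the $i_1$-section and factors through the cube-localization because $\omega^*\Gm$ is $\bcube$-invariant; the computation of $h_0(\widehat{\bW}_n^+)$ does reduce to a relative Picard/Rosenlicht--Serre type calculation with the $p$-typical part extracted by the idempotents $\ell^{-1}V_\ell F_\ell$; and (3) is exactly the motivic reconstruction of the de Rham--Witt complex, with $p\ge 3$ entering through the verification of the defining relations. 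However, be aware that what you have written is an outline rather than a proof: each of the three substantive verifications (the moving/specialization argument identifying $h_0(\Gm^+)$ with $\Gm$, the Witt-vector comparison over affines, and the check that cube-equivalence enforces all de Rham--Witt relations) is explicitly deferred. For a result that the paper itself only cites, this level of detail is appropriate, but it should be presented as a summary of \cite{KM24} rather than as an independent argument.
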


\begin{rem}
For $r=1$, that is, for the $W_1=\Ga$ case, another motivic construction of the sheaf of K\"ahler differentials $\Omega^n$ is known as follows.
\begin{thm}[{\cite[Thm.~5.19]{RSY22}}]
\label{thm:RSY}
  Suppose that $\mathrm{ch}(k)\neq 2,3,5.$
  Let $\Ga^M, \Gm^M$ be the following objects in $\PSh(\MCor_k^\Z)$
  \[
  \Ga^M:=\Coker(i_0:\Z\to\Z_\mathrm{tr}(\bP^1, 2[\infty])), \quad 
  \Gm^M:=\Coker(i_1:\Z\to\Z_\mathrm{tr}(\bP^1, [0]+[\infty])),
  \]
  where $\Z:=\Z_\mathrm{tr}(\Spec(k),\emptyset).$
  Then there is an isomorphism in $\Sh_{\Nis}(\Cork)$
  \[
  a_\Nis h_0(\Ga^M\otimes (\Gm^M)^{\otimes n})\simeq \Omega^n.
  \]
\end{thm}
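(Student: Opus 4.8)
\autoref{thm:RSY} is quoted from \cite{RSY22}; the plan for a proof is to construct an explicit comparison morphism with target $\Omega^n$, reduce its bijectivity to sections over finitely generated fields, and there match it with the presentation of Kähler differentials by generators and relations.

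For the construction, one uses the level-one comparison maps $h_0(\Ga^M)\to\Ga$ and $h_0(\Gm^M)\to\Gm$ in $\PSh(\Cork)$, the latter being the $\Z$-modulus analogue of \autoref{thm:KM}\,(1); here the divisors $2[\infty]$ on $\Ga^M$ and $[0]+[\infty]$ on $\Gm^M$ are exactly what render these maps well defined. Combining them with the lax symmetric monoidality of $h_0$ and of $a_\Nis$, and with the $\dlog$-symbol $\Ga\otimes\Gm^{\otimes n}\to\Omega^n$, $a\otimes b_1\otimes\cdots\otimes b_n\mapsto a\,\dlog b_1\wedge\cdots\wedge\dlog b_n$ (which is compatible with transfers by the projection formula for Kähler differentials), one obtains a morphism
\[
\phi\colon a_\Nis h_0\bigl(\Ga^M\otimes(\Gm^M)^{\otimes n}\bigr)\longrightarrow\Omega^n
\]
in $\Sh_\Nis(\Cork)$. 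That $\phi$ is well defined --- i.e.\ that the $\dlog$-symbol kills the relations imposed by $\bcube$-invariance together with the prescribed moduli --- reduces to a sum-of-residues computation on $\bP^1$: a rational function whose polar divisor is bounded by $2[\infty]$, resp.\ $[0]+[\infty]$, produces via reciprocity exactly the vanishing relations among the $a\,\dlog$'s.

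To see that $\phi$ is an isomorphism, I would reduce to fields. Both the source (by \autoref{lem:cube-rec}) and the target (by \cite{KSY16, KSY22}) lie in $\RSC_k$ and are Nisnevich sheaves; the reciprocity sheaves involved admit Gersten-type resolutions, so $\phi$ is an isomorphism as soon as $\phi_K\colon h_0(\Ga^M\otimes(\Gm^M)^{\otimes n})(K)\to\Omega^n_K$ is bijective for every finitely generated field extension $K/k$. Surjectivity is immediate, since $\Omega^n_K$ is generated by the elements $a\,\dlog b_1\wedge\cdots\wedge\dlog b_n$ with $a\in K$ and $b_i\in K^\times$. For injectivity one exhibits an inverse: in a Somekawa-type presentation of $h_0(\Ga^M\otimes(\Gm^M)^{\otimes n})(K)$ by symbols $\{a,b_1,\ldots,b_n\}_{L/K}$ modulo the projection formula and the curve relations coming from $\bcube$, one checks that $a\,\dlog b_1\wedge\cdots\wedge\dlog b_n\mapsto\{a,b_1,\ldots,b_n\}_{K/K}$ is additive in $a$, respects the Leibniz rule $\mathrm{d}(aa')\mapsto a'\,\mathrm{d}a+a\,\mathrm{d}a'$ and the alternating relation, and hence factors through $\Omega^n_K$.

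The main obstacle is precisely this injectivity input: one must show that the $\bcube$-relations with the chosen moduli $2[\infty]$ and $[0]+[\infty]$ impose exactly the defining relations of $\Omega^n_K$, and no more. This is the technical heart of \cite{RSY22}, carried out through an explicit analysis of admissible cycles on products $\bP^1\times\cdots\times\bP^1$ with the prescribed modulus; it is also where the hypothesis $\mathrm{ch}(k)\neq 2,3,5$ enters, as several small integers must be invertible in $k$ for the cycle-theoretic identities --- in particular the symmetrizations and the passage between $\mathrm{d}b$ and $\dlog b$ --- to recover the Kähler-differential relations in the stated, underived form.
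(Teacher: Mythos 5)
The paper itself contains no proof of this statement: \autoref{thm:RSY} is stated inside a remark purely as an imported result, cited verbatim from \cite[Thm.~5.19]{RSY22} and used as a black box, so there is no argument in the paper to compare yours against. Your sketch is a reasonable roadmap of how the cited reference actually proceeds --- a $\dlog$-type comparison map, well-definedness via residue/reciprocity computations on $\bP^1$ with the prescribed moduli, reduction to sections over finitely generated fields, and matching against the generators-and-relations presentation of $\Omega^n_K$ --- and it correctly identifies where the characteristic hypothesis enters.

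Judged as a standalone proof, however, it has a genuine gap exactly where you flag it: the claim that the $\bcube$-relations with moduli $2[\infty]$ and $[0]+[\infty]$ impose \emph{precisely} the additivity, Leibniz, and alternating relations of $\Omega^n_K$ and nothing more is the entire content of the theorem, and you defer it to ``the technical heart of \cite{RSY22}'' rather than carrying it out. In particular, your proposed inverse map $a\,\dlog b_1\wedge\cdots\wedge\dlog b_n\mapsto\set{a,b_1,\ldots,b_n}_{K/K}$ is only well defined once one has already verified that the Leibniz rule holds among the symbols --- which requires exhibiting explicit admissible cycles on products of $\bP^1$'s realizing $\mathrm{d}(aa')=a\,\mathrm{d}a'+a'\,\mathrm{d}a$ modulo the modulus condition; this is where the bulk of the work (and the exclusion of small characteristics) lives, and it is absent from the proposal. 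Since the paper treats the result as external input, the honest conclusion is that your text is an outline of the reference's strategy, not a proof.
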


\end{rem}

\subsection*{Relation with Mackey products and reciprocity}
For any $\cF_1, \cF_2\in \PSh(\MCork^\Lambda)$ we have a canonical surjection (cf.~\cite[Sect.~3.11]{RSY22}, \cite[Rmk.~3.14]{KM24})
\begin{equation}
h_0(\cF_1)\otimes h_0(\cF_2)\twoheadrightarrow h_0(\cF_1\otimes \cF_2).
\end{equation}
and thus for any field extension $F/k$ we have a canonical surjection
\begin{equation}
\label{eq:epi-tensor}
\Big(h_0(\cF_1)\otimesM h_0(\cF_2)\Big)(F)\simeq (h_0(\cF_1)\otimes h_0(\cF_2))(F)\twoheadrightarrow h_0(\cF_1\otimes \cF_2)(F).
\end{equation}
Here, the first isomorphism comes from the fact that the value at $F$ the tensor product in $\PSh(\Cork)$ coincides with that of the Mackey tensor product (cf.~\cite[Rmk.~4.1.3]{IR17}). 
In case $\cF_i=h^0(G_i)\in \PSh(\MCor_k^\Z) \ (G_i\in \HI_k)$ for all $i$, then we have isomorphisms
\[
h_0(\cF_1\otimes\cdots\otimes \cF_n)(F)
\simeq 
(G_1\overset{\HI}{\otimes}\cdots \overset{\HI}{\otimes}G_n)(F)
\simeq
K(F;G_1, \dots, G_n)
\]
where $\overset{\HI}{\otimes}$ means the tensor product in $\HI_k$.
The first isomorphism is proved in \cite[Theorem 1.6]{RSY22} and the second isomorphism is proved in \cite{KY13}.
In this case, \eqref{eq:epi-tensor} gives
\begin{align}\label{eq:epi-hi-case}
\Big(G_1\otimesM G_2\Big)(F)\simeq (G_1\otimes G_2)(F)\twoheadrightarrow h_0(\cF_1\otimes \cF_2)(F)\simeq K(F; G_1, G_2).
\end{align}
Hence the kernel of the map \eqref{eq:epi-hi-case} consists of elements coming from Weil reciprocity law in the definition of Somekawa $K$-group.

In general, $h_0(\cF_1\otimes \cF_2)$ is not in $\HI_k$ but we always have $h_0(\cF_1\otimes \cF_2)\in \RSC_k$ by \autoref{lem:cube-rec}.
A result of \cite[Thm.~4.9]{RSY22} gives a description of $h_0(\cF_1\otimes \cF_2)(F)$ in terms of generators and relations, where the relations are coming from Weil reciprocity law with modulus.
For this reason, the kernel of \eqref{eq:epi-tensor} is thought of as the part of Weil reciprocity law with modulus (or Rosenlicht-Serre reciprocity law).

\begin{dfn}\label{def:van}
Let $F$ be a field extension over $k$. 
For $\cF_1, \dots, \cF_n\in \PSh(\MCor_k^\Lambda)$, put $G:=h_0(\cF_1)\otimesM\cdots \otimesM h_0(\cF_n)$.
Let $\varphi$ be an endomorphism of $G$ as Mackey functors.
From \eqref{eq:epi-tensor} we have  the following map
\[
\pi: G(F)\twoheadrightarrow h_0(\cF_1\otimes\cdots\otimes \cF_n)(F).
\]
We say that \textbf{the reciprocity for $G$ is killed by $\varphi$} over $F$ if the kernel $\Ker(\pi)$ is zero in $\Coker(\varphi)(F)$.
\end{dfn}

\begin{prop}
    For any $m\ge 1$, 
    the reciprocity for $\Gm^{\otimesM n}$ is killed by the multiplication by $m$.
\end{prop}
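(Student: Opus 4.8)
The plan is to reduce the statement to Kahn's theorem comparing the Mackey power $\Gm^{\otimesM n}$ with Milnor $K$-theory modulo $m$. Take $\cF_i=\Gm^+$ for every $i$. By \autoref{thm:KM}~(1) we have $h_0(\Gm^+)\simeq\Gm$, so that $G=\Gm^{\otimesM n}$, the map $\pi$ of \autoref{def:van} becomes the canonical surjection
\[
\pi\colon \Gm^{\otimesM n}(F)\twoheadrightarrow h_0\big((\Gm^+)^{\otimes n}\big)(F),
\]
and $\Coker(m\cdot\mathrm{id})(F)=\Gm^{\otimesM n}(F)/m$. Thus what has to be shown is $\Ker(\pi)\subseteq m\,\Gm^{\otimesM n}(F)$; equivalently, since $\pi$ is onto, that $\pi$ induces an isomorphism $\Gm^{\otimesM n}(F)/m\xrightarrow{\simeq}h_0\big((\Gm^+)^{\otimes n}\big)(F)/m$.

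The first step is to factor $\pi$ through the Somekawa $K$-group. Using the adjunction $h_0\dashv h^0$ and the identification $h_0(\Gm^+)\simeq\Gm$, the unit at $\Gm^+$ gives a morphism $\Gm^+\to h^0(\Gm)$ in $\PSh(\MCork^{\Q})$; applying $(-)^{\otimes n}$ and then $h_0$ produces a morphism $h_0\big((\Gm^+)^{\otimes n}\big)\to h_0\big(h^0(\Gm)^{\otimes n}\big)$, whose value at $F$ is, by (the $n$-fold form of) \eqref{eq:epi-tensor} and the ensuing identification for objects of $\HI_k$, the Somekawa group $K(F;\Gm,\ldots,\Gm)$, which is $K_n^M(F)$ by Somekawa's theorem (\cite{Som90}). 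By naturality of the surjections involved, the composite $\Gm^{\otimesM n}(F)\xrightarrow{\pi}h_0\big((\Gm^+)^{\otimes n}\big)(F)\to K_n^M(F)$ is the usual surjection $\set{b_1,\ldots,b_n}_{F'/F}\mapsto N_{F'/F}(\set{b_1,\ldots,b_n})$, so that $\Ker(\pi)\subseteq\Ker\big(\Gm^{\otimesM n}(F)\to K_n^M(F)\big)$. Alternatively, one may argue concretely: $\Ker(\pi)$ is generated by the Weil reciprocity relations with modulus that present $h_0\big((\Gm^+)^{\otimes n}\big)(F)$ by generators (cf.~\cite[Thm.~4.9]{RSY22}), and the image in $K_n^M(F)$ of such a relation is a sum $\sum_x N_{F(x)/F}\,\partial_x(\set{f,a_1,\ldots,a_n})$ of tame symbols over the closed points of a proper smooth curve, hence $0$ by the reciprocity law for Milnor $K$-theory.

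The final step is Kahn's theorem: the surjection $\Gm^{\otimesM n}(F)\twoheadrightarrow K_n^M(F)$ induces an isomorphism $\Gm^{\otimesM n}(F)/m\xrightarrow{\simeq}K_n^M(F)/m$; this is \cite[Thm.~4.5]{Hir24} and \cite[Cor.~5.11]{Hir24} for $m$ a power of $\ch(F)$, and the same argument applies for an arbitrary $m\ge 1$. Consequently $\Ker\big(\Gm^{\otimesM n}(F)\to K_n^M(F)\big)\subseteq m\,\Gm^{\otimesM n}(F)$, whence $\Ker(\pi)\subseteq m\,\Gm^{\otimesM n}(F)$, which is the assertion. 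The point that requires genuine care is the factorization step: one must verify that each relation cutting $h_0\big((\Gm^+)^{\otimes n}\big)(F)$ out of $\Gm^{\otimesM n}(F)$ maps to $0$ in $K_n^M(F)$ — either through the compatibility of the unit $\Gm^+\to h^0(\Gm)$ with the symbol maps, or through the explicit reciprocity-with-modulus relations of \cite[Thm.~4.9]{RSY22} together with Weil reciprocity for Milnor $K$-theory; once this is granted, Kahn's theorem closes the argument at once.
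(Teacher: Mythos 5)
Your argument is correct and follows essentially the same route as the paper: compare $\pi$ with the canonical surjection onto $K_n^M(F)$ and conclude with Kahn's theorem that $\Gm^{\otimesM n}(F)/m \simeq K_n^M(F)/m$. The only real difference is that the paper short-circuits your factorization step by citing \cite[Prop.~5.6]{RSY22}, which identifies $h_0((\Gm^M)^{\otimes n})(F)$ with $K_n^M(F)$ outright, so the compatibility with the Somekawa/Milnor symbol maps that you verify via the adjunction unit (or via Weil reciprocity) comes for free.
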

\begin{proof}
    We have the following commutative diagram 
    \[
    \xymatrix{
    (\Gm^{\otimesM n})(F)\ar@{->>}[d]_{\pi} \ar@{->>}[rr] &&   (\Gm^{\otimesM n})(F)/m \ar[d]^{\simeq}\\
    h_0((\Gm^M)^{\otimes n})(F) \ar[r]^-{\simeq}_-{(\star)} & K_n^M(F) \ar@{->>}[r]  & K_n^M(F)/m, 
    }
    \]
    where the map ($\star$) is an isomorphism by \cite[Prop.~5.6]{RSY22} and
    the right vertical map is bijective by Kahn's theorem (\cite[Thm.~4.5]{Hir24}). 
    From the above diagram, 
    $\Ker(\pi)$ becomes zero in $(\Gm^{\otimesM n})(F)/m$.
\end{proof}

We now consider the reciprocity for $W_r\otimesM \Gm^{\otimesM n}, W_r\otimesM \underline{CH}_0(X)$ and prove the following.
\begin{cor}\label{cor:killed-rec}
\begin{enumerate}
   \item 
   Let $F$ be a field extension of $k$ of $\mathrm{ch}(F)=p\geq 3$. 
   The reciprocity for $W_r\otimesM \Gm^{\otimesM n}$ is killed by $\wp\otimes \mathrm{id}^{\otimes n}$ over $F$.
   \item
   Suppose that $k$ is a finite field and $F$ is a finite extension of $k$.
   Let $X$ be a projective smooth and geometrically irreducible scheme over $k$.
   The reciprocity for $W_r\otimesM  \underline{CH}_0(X)$ is killed by $\wp\otimes \mathrm{id}$ over $F$.
   \item
    Suppose that $k$ is a finite field, 
    and $F$ is a local field with residue field $k$ or a global field over $k$.
   Let $X_k$ be a projective smooth and geometrically irreducible curve over $k$ 
   and put $X = X_k\otimes_k F$. 
   The reciprocity for $W_r\otimesM  \underline{CH}^2(X,1)$ is killed by $\wp\otimes \mathrm{id}$ over $F$.
\end{enumerate}
\end{cor}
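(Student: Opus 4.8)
The plan is to handle the three parts uniformly. In each case one first fixes finitely many modulus presheaves $\cF_1,\dots,\cF_N$ so that $G:=h_0(\cF_1)\otimesM\cdots\otimesM h_0(\cF_N)$ is the Mackey product in question, writes $\varphi$ for the relevant endomorphism $\wp\otimes\mathrm{id}^{\otimes(N-1)}$, abbreviates $G(F)/\wp:=\Coker(\varphi)(F)$, and has at hand the canonical quotient $q\colon G(F)\twoheadrightarrow G(F)/\wp$ together with the surjection $\pi\colon G(F)\twoheadrightarrow h_0(\cF_1\otimes\cdots\otimes\cF_N)(F)$ of \eqref{eq:epi-tensor}. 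The assertion to prove is $\Ker(\pi)\subseteq\img(\varphi)=\Ker(q)$. To establish it I would produce a homomorphism
\[
\sigma\colon h_0(\cF_1\otimes\cdots\otimes\cF_N)(F)\longrightarrow H^{\bullet}_{p^r}(F)
\]
(with $H^{\bullet}_{p^r}=H^{n+1}_{p^r}$, $H^1_{p^r}$, $H^2_{p^r}$ in the three cases) and an \emph{injective} comparison map $\iota\colon G(F)/\wp\hookrightarrow H^{\bullet}_{p^r}(F)$, and show $\sigma\circ\pi=\iota\circ q$. Granting this, $\Ker(\pi)\subseteq\Ker(\sigma\circ\pi)=\Ker(\iota\circ q)=\Ker(q)$, which is exactly the claim. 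So the work splits into (a) constructing $\sigma$ and checking $\sigma\circ\pi=\iota\circ q$, and (b) knowing that $\iota$ is injective; step (b) is where the hypotheses on $F$ and $X$ enter.

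For (1) I take $\cF_1=W_r^+$ and $\cF_2=\cdots=\cF_{n+1}=\Gm^+$, so $G=W_r\otimesM\Gm^{\otimesM n}$ by \autoref{thm:KM}, and I take $\iota=\tilde{s}_{F,p^r}^{\,n}$ (\autoref{thm:H24}), which is an isomorphism — so (b) is free. For (a): \autoref{thm:KM} (3) (this is the place where $\ch(F)=p\ge 3$ is used) gives an isomorphism $a_\Nis h_0\bigl(W_r^+\otimes(\Gm^+)^{\otimes n}\bigr)\simeq W_r\Omega^n$ of Nisnevich sheaves on $\Cork$; composing the sheafification unit with this isomorphism and evaluating at $F$ yields $h_0\bigl(W_r^+\otimes(\Gm^+)^{\otimes n}\bigr)(F)\to W_r\Omega^n_F$, and I set $\sigma$ to be its composite with the two surjections $W_r\Omega^n_F\twoheadrightarrow W_r\Omega^n_F/\mathrm{d}V^{r-1}\Omega^{n-1}_F\twoheadrightarrow H^{n+1}_{p^r}(F)$ of \eqref{eq:strH^n}. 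To check $\sigma\circ\pi=\tilde{s}_{F,p^r}^{\,n}\circ q$: every map occurring is a morphism of presheaves with transfers — for the source this uses that at a field the Mackey tensor product agrees with the tensor product in $\PSh(\Cork)$ together with its transfers (cf.~\cite[Rmk.~4.1.3]{IR17}) — so by transfer compatibility and the projection formula it suffices to compare the two maps on a diagonal symbol $\set{\ba,b_1,\ldots,b_n}_{L/L}$ over an arbitrary extension $L/k$. There, $\pi$ followed by the map to $W_r\Omega^n_L$ sends $\set{\ba,b_1,\ldots,b_n}_{L/L}$ to $\ba\,\dlog[b_1]\cdots\dlog[b_n]$, this being the normalization of the isomorphism of \autoref{thm:KM} (3) as the $\dlog$-symbol; that element maps to $\AS{\ba,b_1,\ldots,b_n}_L\in H^{n+1}_{p^r}(L)$ by \autoref{thm:Kato}, while $\tilde{s}_{L,p^r}^{\,n}\circ q$ sends the same symbol to $\AS{\ba,b_1,\ldots,b_n}_L$ by \autoref{thm:H24}. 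Hence the two maps agree.

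For (2) and (3) I would reduce to part (1). In (2) take $\cF_1=W_r^+$ and $\cF_2$ a modulus presheaf with $h_0(\cF_2)\simeq\underline{CH}_0(X)$ — e.g.~$\Z_\mathrm{tr}(X,\emptyset)$, whose $h_0$ is the $0$-th Suslin homology of the proper smooth scheme $X$ — so $G=W_r\otimesM\underline{CH}_0(X)$; the structure map $X\to\Spec(k)$ induces $\cF_2\to\Z_\mathrm{tr}(\Spec(k),\emptyset)$, hence $h_0(W_r^+\otimes\cF_2)\to h_0(W_r^+)=W_r$, and $\sigma$ is the composite with $W_r(F)\twoheadrightarrow W_r(F)/\wp=H^1_{p^r}(F)$. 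Then $\sigma\circ\pi=\iota\circ q$ with $\iota=f_{\ast}^{M}=(\mathrm{id}\otimes f_{\ast}^{CH})\bmod\wp$ induced by the degree map $\underline{CH}_0(X)\to\Z=K^M_0$; by \eqref{diag:fastM-KH} this $\iota$ equals $f_{\ast}^{KH}\circ\psi$, which is an isomorphism over the finite field $F$ by \autoref{thm:finite}, so (b) holds, and the identity $\sigma\circ\pi=\iota\circ q$ is verified on symbols $\set{\ba,[y]}_{E/F}$ for closed points $y$ of $X_E$ using the explicit formula for $\psi$ from the proof of \autoref{prop:surj}. In (3), using \autoref{thm:Akh2} ($\underline{CH}^2(X,1)\simeq K(-;\underline{CH}_0(X),\Gm)$) together with the identification of Somekawa $K$-groups with $h_0$ of tensor products of $h^0$'s of homotopy-invariant sheaves (valid since $\underline{CH}_0(X),\Gm\in\HI_k$), I fix $\cF_1=W_r^+$ and a modulus presheaf $\cF_2$ with $h_0(\cF_2)\simeq\underline{CH}^2(X,1)$; the structure map realizes $f_{\ast}^{CH}\colon\underline{CH}^2(X,1)\to\Gm$ by a morphism of $\cF_2$ to a modulus presheaf whose $h_0$ is $\Gm$, and composing with the symbol map of part (1) for $n=1$ produces $\sigma$ with $\sigma\circ\pi=\tilde{s}_{F,p^r}^{\,1}\circ f_{\ast}^{M}\circ q$. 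Here $f_{\ast}^{M}\colon\bigl(W_r\otimesM\underline{CH}^2(X,1)\bigr)(F)/\wp\to\bigl(W_r\otimesM\Gm\bigr)(F)/\wp$ is an isomorphism by \autoref{thm:local} (1) and \autoref{thm:global} (1), and $\tilde{s}_{F,p^r}^{\,1}$ is an isomorphism, so $\iota=\tilde{s}_{F,p^r}^{\,1}\circ f_{\ast}^{M}$ is injective.

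The main obstacle is part (1), and inside it the identity $\sigma\circ\pi=\tilde{s}_{F,p^r}^{\,n}\circ q$ — equivalently, the statement that the Rosenlicht-Serre (``Weil reciprocity law with modulus'') relations generating $\Ker(\pi)$ become trivial modulo $\wp$. Through the injectivity of $\tilde{s}_{F,p^r}^{\,n}$ this reduces to the diagonal-symbol computation above, so the genuine inputs are the precise normalization of the de Rham-Witt realization in \autoref{thm:KM} (3) (that $\ba\otimes b_1\otimes\cdots\otimes b_n$ realizes as $\ba\,\dlog[b_1]\cdots\dlog[b_n]$) and the fact that $\pi$ respects transfers, which rests on the comparison of the two tensor structures and the functoriality of $h_0$. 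For parts (2) and (3) there is no new analytic input: what remains is bookkeeping — realizing $\underline{CH}_0(X)$ and $\underline{CH}^2(X,1)$ as $h_0$'s compatibly with transfers, and choosing mutually compatible modulus lifts of $\Gm$ so that the symbol map of part (1) applies — together with the citations of \autoref{thm:finite}, \autoref{thm:local} and \autoref{thm:global}, which already supply the injectivity of $\iota$.
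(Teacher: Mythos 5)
Your proposal is correct and follows essentially the same route as the paper: in each case the paper draws a commutative square from the Mackey product to the de Rham--Witt realization of $h_0(\cF_1\otimes\cdots\otimes\cF_N)$ (via \autoref{thm:KM}) and concludes that $\Ker(\pi)$ dies modulo $\wp$ because the comparison map out of $\Coker(\wp\otimes\Id)$ is injective --- supplied by $\tilde{s}_{F,p^r}^{\,n}$ in (1) and by the bijectivity of $\Id\otimes f_*^{CH}$ modulo $\wp$ from \autoref{thm:finite}, \autoref{thm:local}, \autoref{thm:global} in (2) and (3). Your explicit verification of commutativity on diagonal symbols merely fleshes out what the paper leaves as an ``easy diagram chase.''
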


\begin{proof}
(1) 
    From \autoref{thm:H24} and \autoref{thm:KM} (3), we obtain the following commutative diagram
    \[
    \xymatrix{
    \Big(W_r\otimesM \Gm^{\otimesM n}\Big)(F)\ar@{->>}[d]_{\pi}\ar@{->>}[rr] && \Big(W_r\otimesM \Gm^{\otimesM n}\Big)(F)/\wp\ar[d]^{\simeq }_{\quad \tilde{s}_{F,p^r}^n}\\
    h_0(W_r^+\otimes (\Gm^+)^{\otimes n})(F)\ar[r]^{\hspace{1.3cm} \simeq }&W_r\Omega_F^n\ar@{->>}[r]& H_{p^r}^{n+1}(F).
    }
    \]
    Now, an easy diagram chase shows $\Ker(\pi)$ is zero in  $\Big(W_r\otimesM \Gm^{\otimes n}\Big)(F)/\wp$.

\noindent 
(2) 
    The structure map $f\colon X\to \Spec(k)$ induces a canonical morphism $h_0^\bcube(X,\emptyset)\to \Z$ in $\PSh(\MCor_k^\Z)$, and we have the following commutative diagram
    \[
    \xymatrix{
    \Big(W_r\otimesM \underline{CH}_0(X)\Big)(F)
    \ar[r]^-{\Id\otimes f_*^{CH}}\ar@{->>}[d]^{\pi}&\Big(W_r\otimesM \Z\Big)(F)\ar@{=}[r]&W_r(F)\ar[d]_{\simeq }^{\text{Thm.\,\ref{thm:KM}}}\\
    h_0(W_r^+\otimes h_0^\bcube(X,\emptyset))(F)\ar[r]
    &h_0(W_r^+\otimes \Z)(F)\ar@{=}[r]&h_0(W_r^+)(F).
    }
    \]
    Since the map $\Id\otimes f_*^{CH}$ becomes an isomorphism after modulo $\wp\otimes \Id$ by \autoref{thm:finite}, $\Ker(\pi)$ is zero in $\Coker(\wp \otimes \Id)$.

\noindent
(3)    The canonical morphism $h_0^\bcube(X_k,\emptyset)\to \Z$ in $\PSh(\MCor_k^\Z)$ induces a morphism
    \[
    h_0^\bcube(\Gm^+\otimes h_0^\bcube(X_k,\emptyset))\to h_0^\bcube(\Gm^+)
    \]
    whose sections over $\Spec (F)$ is the map $f_*^{CH}\colon CH^2(X,1)\to F^\times$.
    We now have the following commutative diagram
    \[
    \xymatrix@C=3mm{
    \Big(W_r\otimesM \underline{CH}^2(X,1)\Big)(F)
    \ar[r]^-{\Id\otimes f_*^{CH}}\ar@{->>}[d]^{\pi}&\Big(W_r\otimesM \Gm\Big)(F)\ar@{->>}[r]&\Big(W_r\otimesM \Gm\Big)(F)/\wp\ar[d]_{\simeq }^{\text{Thm.\,\ref{thm:H24}}}\\
    h_0(W_r^+\otimes h_0^\bcube(\Gm^+\otimes h_0^\bcube(X_k,\emptyset)))(F)\ar[r]
    &h_0(W_r^+\otimes \Gm^+)(F)\ar@{->>}[r]&(W_r\Omega_F^1/\mathrm{d} V^{r-1}F)/\wp.
    }
    \]
    Since the map $\Id\otimes f_*^{CH}$
    becomes an isomorphism after modulo $\wp\otimes \Id$ by \autoref{thm:local} and \autoref{thm:global} (see also 
    \autoref{rem:index}), $\Ker(\pi)$ is zero in $\Coker(\wp \otimes \Id)$.
\end{proof}

\def\cprime{$'$}
\providecommand{\bysame}{\leavevmode\hbox to3em{\hrulefill}\thinspace}



\end{document}